\documentclass[hidelinks,onefignum,onetabnum]{siamart251216}

\usepackage{amsmath, amssymb, bm}
\usepackage{amsfonts}
\usepackage{graphicx}
\usepackage{epstopdf}
\usepackage{booktabs}
\usepackage{float}
\usepackage{tikz}
\usepackage{subcaption}
\usepackage{xcolor}
\usepackage{xurl}
\usepackage{tikz}
\usetikzlibrary{arrows.meta,positioning,calc}
\usepackage{algorithmic}
\ifpdf
  \DeclareGraphicsExtensions{.eps,.pdf,.png,.jpg}
\else
  \DeclareGraphicsExtensions{.eps}
\fi

\graphicspath{{pic/}}

\newsiamremark{remark}{Remark}
\newsiamthm{example}{Example}
\newsiamremark{hypothesis}{Hypothesis}
\crefname{hypothesis}{Hypothesis}{Hypotheses}
\newsiamthm{claim}{Claim}
\newsiamremark{fact}{Fact}
\crefname{fact}{Fact}{Facts}

\headers{Fei Xie, Nan Lu, Yajuan Sun}{Energy stable auxiliary variable}

\title{Energy stable auxiliary variable method for Cahn--Hilliard equations\thanks{Submitted to the editors DATE.
		\funding{The work of the third author is supported by the National Natural Science Foundation of China (Grant No. 12271513).}}}

\author{Fei Xie\thanks{State Key Laboratory of Mathematical Sciences, Academy of Mathematics and Systems Science, Chinese Academy of Sciences, Beijing 100190, China, and University of Chinese Academy of Sciences, Beijing 100049, China (xiefei2021@lsec.cc.ac.cn).}
	\and Nan Lu\thanks{Department of Mathematics, Shanghai Normal University, Shanghai, China 200234 (lunan@shnu.edu.cn).}
	\and Yajuan Sun\thanks{State Key Laboratory of Mathematical Sciences, Academy of Mathematics and Systems Science, Chinese Academy of Sciences, Beijing 100190, China, and University of Chinese Academy of Sciences, Beijing 100049, China (sunyj@lsec.cc.ac.cn).}}

\usepackage{amsopn}

\ifpdf
\hypersetup{
  pdftitle={Energy stable auxiliary variable method},
  pdfauthor={Fei Xie, Nan Lu, Yajuan Sun}
}
\fi

\begin{document}

\maketitle
\begin{abstract}
	In this paper, we propose a quadratic reformulation theory for rational-like functions. Based on this theory, we develop the Quadratic Conservation Elevation (QCE) method, which combines the Scalar Auxiliary Variable (SAV) method with the implicit midpoint rule. We apply this approach to the Cahn–Hilliard (CH) equation with rational-like free-energy terms, obtaining numerical discretizations that preserve the original energy dissipation law. We further derive the discrete dispersion relation and coarsening dynamics, confirming the method's efficiency and consistency with the continuous counterpart. In addition, we use the proposed method to capture missing orientations for  different anisotropic functions.  Numerical  simulations with various   initial conditions illustrate phase separation and anisotropic evolution.	
\end{abstract}

\begin{keywords}
	Cahn--Hilliard equation, anisotropic phase-field model, energy-stable scheme, dispersion relation, coarsening rate.
\end{keywords}

\begin{MSCcodes}
	65P10, 65L05, 65M12
\end{MSCcodes}

\section{Introduction}
The Cahn--Hilliard (CH) equation is a fundamental phase-field model for phase separation and interface-driven pattern formation in materials science \cite{Boettinger2002,cahn1958}. It can be formulated as an $H^{-1}$ gradient flow associated with a Ginzburg--Landau free energy and exhibits two key structural properties: mass conservation and energy dissipation \cite{Wu2022Review}. To avoid nonphysical energy drift, long-time numerical simulations require schemes that preserve the energy structure inherited from the continuous system. 

A variety of numerical discretizations preserving the energy properties have been developed for CH-type equations, including the discrete variational derivative method \cite{Furihata2010DVD}, the average vector field (AVF) method \cite{Celledoni2012}, and discrete-gradient-based integral-preserving frameworks for PDEs \cite{DahlbyOwren2011}. 
In recent years, auxiliary-variable techniques, including the invariant energy quadratization (IEQ) method \cite{Yang2017IEQ} and the scalar auxiliary variable (SAV) method \cite{Shen2018SAV}, have become widely used for gradient flows. These approaches are highly effective, but the reformulated energy does not always coincide directly with the original one.
More recently, Tapley \cite{Tapley2022} proposed the multiple quadratic auxiliary variable (MQAV) method which can preserve the original first integrals. However, it is only suited to polynomial-type invariants.

In many applications, isotropic interfacial energy is insufficient to reproduce faceted shapes. This motivates anisotropic CH models, in which the interfacial energy depends on the interface orientation represented by the phase-field variable \(u\), typically through its gradient \(\nabla u\) \cite{karma1996,Kobayashi1993}. Such anisotropic surface energies give rise to direction-dependent surface tension and equilibrium shapes characterized by the Wulff construction \cite{CahnHoffman1974,Wulff1901}. Various phase-field formulations and analytical treatments of anisotropic energies, including strongly anisotropic cases with facets and missing orientations, have been proposed in \cite{Eggleston2001,TaylorCahn1998,Torabi2009}.
For anisotropic CH models, the associated energy functional is usually more complicated. Here, we focus on a rational-like energy form, for which the construction of structure-preserving numerical discretizations is particularly challenging.

This paper establishes a quadraticization framework for rational-like energy functionals and provides a theoretical justification for such reformulations. By introducing quadratic auxiliary variables, the original rational-like energy can be rewritten as a quadratic functional in an extended variable space. The extended formulation is consistent with the original one through Casimir-type constraints of the extended system. Based on this reformulation, we apply the implicit midpoint method to the extended system and construct the Quadratic Conservation Elevation (QCE) scheme. We prove that the proposed method satisfies the original discrete energy identity and preserves the corresponding energy-dissipation law. Further numerical analysis verifies the discrete dispersion relation, spinodal decomposition, coarsening behavior, and second-order temporal accuracy for both isotropic and anisotropic CH models.

The outline of this paper is as follows. Section~2 presents the theoretical framework for quadratizing rational-like nonlinear functions and verifies the structure-preserving property of the extended formulation. Section~3 develops fully discrete schemes for isotropic and anisotropic CH equations and proves their discrete energy-dissipation laws. Section~4 examines the discrete dispersion relation and related physical phenomena, including spinodal instability, coarsening behavior, and missing orientations. Section~5 verifies the second-order temporal accuracy and investigates the evolution dynamics from different initial conditions. Finally, Section~6 concludes the paper with a brief summary and several directions for future work.
\section{Quadratization-based numerical methods}
We start this section from introducing the definition of rational-like functions.
\begin{definition}[Rational-like Function]\label{defn:rational}
Consider a  function $H(x)$ defined on $\mathbb{R}^n$,
	it is called the rational-like function if there exists a nonzero polynomial of degree $d$ in $n+1$ variables
	$P(x,y)=\sum_{i=0}^d p_i(x)\,y^i$,
	such that for all $x\in\mathbb{R}^n$,
	\[
	P\bigl(x,\;H(x)\bigr)
	\equiv 0.
	\]
\end{definition}

	From the definition above, it follows that the rational-like function $H(x)$ can be derived from the coordinate functions \(x_1, \dots, x_n\) by a finite sequence of operations, including addition, subtraction, multiplication, and division and the extraction of real \(m\)-th roots. Typical examples include polynomial functions, rational functions, radical functions, etc.
	
\begin{definition}[Dimension-raising Transformation]
	Suppose \(G(x,y):\mathbb{R}^{n+d} \to \mathbb{R}^d\) is a continuously differentiable function, and assume that $\partial_yG(x,y)\in\mathbb{R}^{d\times d}$ is nonsingular. For a given \(H(x):\mathbb{R}^n \to \mathbb{R}\), $G$  is called the dimension-raising function if \(H(x)\) can be embedded into a higher-dimensional function \(\tilde{H}(x,y):\mathbb{R}^{n+d} \to \mathbb{R}\) such that
	\[
	\tilde{H}(x,y) \big|_{G(x,y)=0} = H(x).
	\]
\end{definition}

A function $H:\mathbb{R}^n\to\mathbb{R}$  is   quadraticizable  if for some integer $d\ge 1$, there exists a  map $G:\mathbb{R}^{n+d}\to\mathbb{R}^d$ and a quadratic function $\widetilde H:\mathbb{R}^{n+d}\to\mathbb{R}$ satisfying
$\widetilde H(x,y)=H(x)$ for $(x,y)\in \mathcal{M}$. Here,   $\mathcal{M}:=\{(x,y)\in\mathbb{R}^{n+d}\mid G(x,y)=0\}$.
The process is called quadratic dimensional-raising.
In the following, we show that any rational-like function can be mapped to a quadratic function on  higher-dimensional space via a quadratic dimension-raising transformation.
\begin{theorem}\label{thm:quar}
	Assume that  $H:\mathbb{R}^n\to\mathbb{R}$ be a rational-like function, then
	it can be quadraticized.
\end{theorem}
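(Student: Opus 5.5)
The plan is to introduce auxiliary variables one at a time until every relation defining $H$ is at most quadratic. The quadratic function $\widetilde H$ will then simply be one of the new coordinates.

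We start from the polynomial $P(x,y)=\sum_{i=0}^d p_i(x)y^i$ of Definition~\ref{defn:rational} with $P(x,H(x))\equiv 0$. The first auxiliary variable is $y_1$, meant to equal $H(x)$, and we set $\widetilde H(x,y):=y_1$, which is linear and hence quadratic. The remaining task is to build a system of quadratic constraints $G(x,y)=0$ whose solution set $\mathcal M$ forces $y_1=H(x)$, i.e.\ $\mathcal M$ is the graph of a map $x\mapsto y(x)$ with first component $H(x)$.

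The constraints are produced by repeated quadratic splitting.
\begin{enumerate}
\item Introduce $y_2=y_1^2,\;y_3=y_1y_2,\dots$, so that every power $y_1^i$ is a single new variable linked by a quadratic relation.
\item Do the same for every monomial $x^\alpha$ occurring in the coefficients $p_i$, adding variables $z_\beta=x_jz_{\beta'}$ along a chain of multiplications.
\item Each product $p_i(x)y_1^i$ then becomes a sum of products of two auxiliary variables. The single relation $P(x,y_1)=0$ thereby becomes quadratic, and all defining relations (e.g.\ $y_2-y_1^2=0$) are quadratic as well.
\end{enumerate}
On the solution set, every auxiliary variable is uniquely determined by $x$ and $y_1$ through the chain relations, so $\widetilde H=y_1=H(x)$ on the branch selected by $H$.

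Alternatively, one can follow the constructive description after Definition~\ref{defn:rational}: build $H$ by a finite sequence of field operations and real roots, and give each intermediate quantity its own variable. The resulting constraints are $w=u\pm v$, $w=uv$, $wv=u$ for division, and $w^m=u$ for roots, where the last is reduced to quadratic form by the same power chain $w_2=w^2,\dots$. This inductive version has the advantage that each new variable is explicitly determined by earlier ones.

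The main obstacle is the nonsingularity requirement on $\partial_yG$ in the definition of a dimension-raising transformation. The requirement does hold for the chain constraints: ordering the variables, the Jacobian is triangular with diagonal entries $1$. It can fail for the defining equation, whose derivative is $\partial_y P(x,y_1)$, or $v$ for a division, or $mw^{m-1}$ for a root. Where it fails, $H$ is locally a simple root of $P$ only off a lower-dimensional set. My first attempt would be to replace $P$ by a square-free factor vanishing on the graph of $H$, so that $\partial_yP\neq0$ off a proper algebraic subset, and to state the result on the open dense set where $H$ is smooth and these denominators are nonzero; there the implicit function theorem also guarantees that $\mathcal M$ locally coincides with the graph. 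Away from such points the construction holds verbatim, which is what the subsequent numerical scheme needs.
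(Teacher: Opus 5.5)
Your argument is correct at the paper's level of rigor, but your main route differs from the paper's.

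\textbf{The paper's route.} The paper proceeds operation by operation:
\begin{itemize}
\item a monomial is reduced by pairing factors, $y_i=x_{\alpha_{2i-1}}x_{\alpha_{2i}}$, and recursing, with one leftover factor in the odd case;
\item $1/x$ is handled by the constraint $xy-1=0$;
\item $x^{1/q}$ is handled by a binary-exponentiation chain $u_{i+1}=u_i^2$, $v_{i+1}=v_iu_i^{b_i}$, $x=v_su_s^{b_s}$, after which $x^{p/q}=u_0^p$ is again a monomial.
\end{itemize}
That is essentially your ``alternative'' paragraph. The difference is that the binary chain needs $O(\log q)$ auxiliary variables per root rather than the $O(q)$ of your linear power chain.

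\textbf{Your route and what it buys.} Your main route instead quadraticizes the annihilating polynomial $P$ of Definition~\ref{defn:rational} directly, with $\widetilde H=y_1$. This buys two things.
\begin{itemize}
\item \emph{Generality.} The paper's case analysis rests on the remark after Definition~\ref{defn:rational} that rational-like functions are built from field operations and real radicals. That remark is false in general: the real root $H(x)$ of $y^5+y+x=0$ is smooth and globally defined, but not expressible by radicals. Your construction needs only $P$, so it covers such functions.
\item \emph{The nonsingularity check.} You actually verify that $\partial_yG$ is nonsingular, as the dimension-raising definition demands. The chain block is unit triangular, so the determinant reduces (a Schur-complement computation) to $\partial_yP(x,y_1)$. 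After passing to a square-free factor with any $y$-free factor discarded, the $y$-discriminant is a nonzero polynomial in $x$. The paper does not address this, and its root chain in fact degenerates at $x=0$.
\end{itemize}

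\textbf{What the paper's route buys.} It gives explicit, economical auxiliary variables, each an explicit function of earlier ones, as used in Appendix~\ref{pf:ch-iso-aux} ($y_4=y_3^{-1}$, $y_6=y_4y_5$). This is what the QCE scheme actually integrates. Your route instead carries a single implicitly defined $y_1$ tied to one high-degree bilinear constraint.

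\textbf{A shared limitation.} Wherever roots are involved, both arguments establish only a branchwise, local statement. $\mathcal M$ contains every real root of the defining relation, so $\widetilde H=H$ holds on the sheet selected by $H$, not on all of $\mathcal M$ as the literal definition of quadraticizable requires. For example, the paper's chain enforces only $u_0^q=x$, which admits $-x^{1/q}$ for even $q$. You say this explicitly; the paper does not.

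One refinement: your ``open dense set where $H$ is smooth'' tacitly assumes $H$ is continuous. Consider a branch-switching $H$ such as $H(x)=x$ on $\mathbb Q$ and $-x$ otherwise, which is a root of $y^2-x^2$. Its graph is not semialgebraic, so no polynomial construction can work. That is a defect of the statement, not of your proof.
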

\begin{proof}
	Due to the definition of the rational-like function, we consider the following four cases. 
	
	Firstly, assume that $H(x)$ is a monomial of $n$-variable with even degree $d=2k$. By relabeling each occurrence of a variable as a distinct symbol, we write every repeated factor $x_i$ as $x_{\alpha_j}$ with a new index $\alpha_j$.
	Then $H(x)$ can be expressed by
	\[
	H(x)=x_{\alpha_1}x_{\alpha_2}\cdots x_{\alpha_{2k-1}}x_{\alpha_{2k}},
	\]
	where $\alpha_{i}, i=1,\cdots, 2k$ takes value from 1 to $2k$.
	Let	$y_{i}=x_{\alpha_{2i-1}}\,x_{\alpha_{2i}}$, 
	we can define a dimension-raising function with $i$-th element $G_i(x, y)=y_{i}-x_{\alpha_{2i-1}}\,x_{\alpha_{2i}}$, which is quadratic. Then $\tilde{H}(x,y)=H(x)= y_1 \cdots y_k$ is reduced to monomial of $n$-variable with degree $k$. If $k>1$, we apply the same procedure to the product $\prod_{i=1}^k y_i$.
		
	The second case considers a monomial of odd degree $d=2k+1$, namely,
	\[
	H(x)=x_{\alpha_1}\cdots x_{\alpha_{2k}}\,x_{\alpha_{2k+1}}
	=\Bigl(\prod_{j=1}^{2k}x_{\alpha_j}\Bigr)\,x_{\alpha_{2k+1}}.
	\]
	The first term is a monomial of  degree $2k$. Thus, using the same auxiliary variable  as  the first case,   we can define  $\tilde{H}(x,y)=
	H(x)=\Bigl(\prod_{i=1}^k y_i\Bigr)\,x_{\alpha_{2k+1}}$.
	After finitely many steps, the original monomial is represented by a quadratic extended function subject to quadratic constraints, and hence $H$ can be quadraticized.
	
	In the third case, suppose \(H(x)\) is a function with a rational power. If \(H(x)=1/x\), introducing auxiliary variable \(y=1/x\) gives \(\tilde{H}(x, y)=y\). The corresponding dimension-raising quadratic function is \(G(x,y) = xy - 1\). 

%
In the last case, consider $H(x)$ has the rational power $H(x)=x^{1/q}$ with $q\in\mathbb{N}$, $q\ge2$. From the binary expansion we have $q=\sum_{i=0}^{s} b_i 2^i$. We introduce auxiliary variable $u_j$, $v_j$, $i=0,1,\cdots s$  intend to satisfy $u_0^q=x$. The procedure of constructing the variables are as follows:

%
%
%
%
%
%
%
 
 \begin{algorithm}[htbp]
 	\caption{Construction of auxiliary variables for $x^{1/q}$}
 	\label{alg:recursive-auxiliary}
 	\begin{algorithmic}[1]
 		\STATE {\bf Input:} Binary coefficients $b_0,\dots,b_s$ such that
 		$q=\sum_{j=0}^s$
 		\STATE {\bf Output:} Auxiliary variables $u_0,\dots,u_s$ and $v_0,\dots,v_s$
 		\STATE Set $(u_0=x^{1/q})$, $v_0=1$
 		\FOR{$i=0,1,\dots,s-1$}
 		\STATE $u_{i+1}= u_i^2$
 		\STATE $v_{i+1}= v_i\,u_i^{\,b_i}$
 		\ENDFOR
 		\STATE  $x=v_su_s^{\,b_s}$
 	\end{algorithmic}
 	\end{algorithm}
Once $u_0=x^{1/q}$ is enforced by the quadratic constraints, the general case $x^{p/q}=(u_0)^p$ follows, and the monomial construction applies to $(u_0)^p$. This concludes the proof.
\end{proof}

Consider the ODE system in the form of
\begin{align}\label{eq:ODE}
	\dot x= f(x).
\end{align}
If the system has the conservative or dissipative properties, then it can be rewritten as
\begin{align}\label{eq:gradientODE}
	\dot x=\mathbb{M}(x)\nabla H(x)
\end{align}
with $\mathbb{M}$ is skew-symmetric or negative semi-definite\cite{McLachlanQuispelRobidoux1999}.


For system~\eqref{eq:ODE}, Theorem~\ref{thm:quar} shows that if the quantity $H(x)$ is rational-like, it can be quadraticized and embedded into an equivalent system defined on  extended space \( \tilde{H} \).
However, the dimension-raising transformation \( G(x,y) \) is not unique. Under the condition of the implicit function theorem, it is known that there exists a smooth function \(\varphi:\mathbb{R}^n\to\mathbb{R}^d\),
such that $y = \varphi(x)$  is implicitly determined  by  $G\bigl(x,\varphi(x)\bigr)\equiv 0$.

Introduce the homeomorphic  map $j:\mathbb{R}^n\to M, j(x)=(x,\varphi(x))$.
It is clear that
\[
\tilde H(x,y)\big|_M
= \tilde H\bigl(j(x)\bigr)
= H(x).
\]
In what follows, we demonstrate that this extended system not only exactly preserves the original invariants but also has a variety of geometric and algebraic properties.

Suppose the quadratic dimension-raising transformation $G(x,y)$ is $d$-dimensional, then without loss of generality it can be expressed as

Suppose each dimension-raising transformation $G_i(x,y)$ is a homogeneous quadratic polynomial of $d$-dimensional. Let  $z=(x^\top,y^\top)^\top\in\mathbb{R}^{n+d}$. Then $G_i(x,y)=\frac12\,z^\top Q^i z,\; i=1,\ldots,d,$ where $Q^i\in\mathbb{R}^{(n+d)\times(n+d)}$ is symmetric and can be partitioned as
\[
Q^i=\begin{pmatrix}
	Q_{11}^i & Q_{12}^i\\
	(Q_{12}^i)^\top & Q_{22}^i
\end{pmatrix}.
\]
The Jacobian $DG(x,y)\in\mathbb{R}^{d\times(n+d)}$ is given by
\[
\begin{aligned}
	D G&=
	\begin{pmatrix}
		\nabla G_1^\top \\
		\vdots  \\[0.75ex]
		\nabla G_d^\top\\
	\end{pmatrix}
	&=\left(\underbrace{
		\begin{array}{c}
			x^\top Q_{11}^1 + y^\top Q_{12}^{1\,\top} \\[0.75ex]
			\vdots \\[0.75ex]
			x^\top Q_{11}^d + y^\top Q_{12}^{d\,\top}
		\end{array}
	}_{N(x,y)}
	\;\Bigg|\;
	\underbrace{
		\begin{array}{c}
			x^\top Q_{12}^1 + y^\top Q_{22}^1 \\[0.75ex]
			\vdots \\[0.75ex]
			x^\top Q_{12}^d + y^\top Q_{22}^d
		\end{array}
	}_{M(x,y)}\!\right).
\end{aligned}
\]

For a given system~\eqref{eq:ODE}, the corresponding extended system is
\begin{equation}\label{eq:extendODE}
	\begin{pmatrix}
		\dot{x}\\
		\dot{y}
	\end{pmatrix}
	=
	F(x,y)
	:=
	\begin{pmatrix}
		\tilde f(x,y)\\[0.2em]
		-\,M^\dagger(x,y)\,N(x,y)\,\tilde f(x,y)
	\end{pmatrix},
\end{equation}
where $M^\dagger(x,y)$ denotes the Moore--Penrose pseudoinverse of $M(x,y)$.
If $M(x,y)$ has full column rank, then
\[
M^\dagger(x,y)=\bigl(M(x,y)^\top M(x,y)\bigr)^{-1}M(x,y)^\top.
\]

Assume that $\mathcal{M}$ can be represented as a graph $y=\varphi(x)$. Then the restriction of $F$ to $\mathcal{M}$ is
$F(x,y)\big|_M=F(x,\varphi(x))=F(j(x))$. Since the $x$-component of the extended system~\eqref{eq:extendODE} is $\dot x=\tilde f(x,y)$, consistency with the original system
$\dot x=f(x)$ on $\mathcal{M}$ implies
\begin{equation}\label{eq:dotx}
	\tilde f\bigl(j(x)\bigr)=f(x).
\end{equation}
Moreover, along trajectories on $M$ we have $y(t)=\varphi(x(t))$, and the chain rule gives $\dot y(t)=\varphi'\bigl(x(t)\bigr)\,\dot x(t)=\varphi'\bigl(x(t)\bigr)\,f\bigl(x(t)\bigr)$. On the other hand, the $y$-component of system~\eqref{eq:extendODE} is
$\dot y=-M^\dagger(x,y)\,N(x,y)\,\tilde f(x,y)$, and therefore, on $\mathcal{M}$,
\begin{equation}\label{eq:doty}
	-\,M^\dagger\bigl(j(x)\bigr)\,N\bigl(j(x)\bigr)\,f(x)
	=\varphi'(x)\,f(x).
\end{equation}

Combining Eq.~\eqref{eq:dotx} and Eq.~\eqref{eq:doty} yields
\begin{equation}\label{eq:F_restriction}
	F(j(x))=D(j(x))\,f(x)
	=
	\begin{pmatrix}
		I\\
		\varphi'(x)
	\end{pmatrix}f(x).
\end{equation}
Here $D(j(x))$ denotes the Jacobian matrix of $j(x)$.

For a dynamical system~\eqref{eq:ODE}, a scalar function \(C(z)\) is called a Casimir invariant if it remains constant along every trajectory of the system, which is,
\[
\frac{d}{dt}C(x(t))=\nabla C(x)\cdot f(x)=0.
\]

In our setting, the constraints \(G(x,y)=0\) may be viewed as Casimir functions of the extended system~\cite{Lu2025}. It is not sufficient to preserve only the modified energy \(\widetilde H(x,y)\) in the extended space. To recover the original energy law, one must also preserve the Casimir constraints \(G(x,y)=0\), so that the numerical solution remains on the constraint manifold.

\begin{proposition}
The extended system~\eqref{eq:extendODE} is equivalent to the original system. Moreover, it inherits the same conservative (or dissipative) property as the original system.
In addition, the constraints $G(x,y)$ are Casimir invariants of the extended dynamics.
\end{proposition}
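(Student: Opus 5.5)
The plan is to prove the three claims in reverse order: first the Casimir property of $G$, then the equivalence of the two systems, and finally the inheritance of the energy law. Throughout, $\mathcal{M}=\{G=0\}$ is taken to be the graph $y=\varphi(x)$. The standing assumption is that $M(x,y)=\partial_y G$ is nonsingular near $\mathcal{M}$, so that $M^\dagger=M^{-1}$.

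\emph{Casimir property.} Write $z=(x,y)$. Along any solution of \eqref{eq:extendODE},
\[
\frac{d}{dt}G(x,y)=N\dot x+M\dot y=N\tilde f-MM^\dagger N\tilde f .
\]
When $M$ is square and invertible, $MM^\dagger=I$, so the right-hand side vanishes identically. Each $G_i$ is therefore a first integral of $F$ for every initial value, which is the Casimir property. In the pseudoinverse case one only gets $MM^\dagger N\tilde f=N\tilde f$ when $N\tilde f\in\operatorname{range}M$. I would state the result under the full-rank assumption, which the definition of a dimension-raising transformation already imposes. It follows that $\mathcal{M}$ is invariant: a solution starting with $G(x_0,y_0)=0$ stays on $\mathcal{M}$.

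\emph{Equivalence.} For initial data $(x_0,\varphi(x_0))$, the invariance of $\mathcal{M}$ gives $y(t)=\varphi(x(t))$. Then \eqref{eq:dotx} reduces the $x$-equation to $\dot x=\tilde f(j(x))=f(x)$. Conversely, given a solution $x(t)$ of \eqref{eq:ODE}, set $z(t)=j(x(t))$. Differentiating $G(x,\varphi(x))\equiv0$ gives $N+M\varphi'=0$, hence $\varphi'=-M^{-1}N$ on $\mathcal{M}$. So $\dot z=Dj\,f=F(j(x))$ by \eqref{eq:F_restriction}, and $z(t)$ solves the extended system. By uniqueness of solutions ($F$ is $C^1$), $j$ maps trajectories to trajectories bijectively, i.e.\ the two flows are conjugate, $\Phi^F_t\circ j=j\circ\Phi^f_t$. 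The main obstacle is this step: justifying $\varphi'=-M^{-1}N$ and ensuring that \eqref{eq:dotx} is a property of the chosen $\tilde f$ rather than an assumption. I would make the consistency condition $\tilde f(j(x))=f(x)$ part of the definition of the extension, which is how the paper's construction uses it.

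\emph{Energy law.} On $\mathcal{M}$ we have $\tilde H\circ j=H$. For a trajectory $z(t)=j(x(t))$,
\[
\frac{d}{dt}\tilde H(z(t))=\frac{d}{dt}H(x(t))=\nabla H(x)^\top\mathbb{M}(x)\nabla H(x),
\]
which is $0$ if $\mathbb{M}$ is skew-symmetric and $\le0$ if $\mathbb{M}$ is negative semi-definite. Hence $\tilde H$ is conserved, or dissipated, along the extended dynamics on $\mathcal{M}$, with the same rate as $H$. One can also write the extended system in the form $\dot z=\tilde{\mathbb{M}}\nabla\tilde H$, with $\tilde{\mathbb{M}}=Dj\,\mathbb{M}\,Dj^\top$ acting on the pullback gradient. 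This is optional, and I would present the direct chain-rule computation above instead.
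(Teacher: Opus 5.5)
Your proposal is correct. For the energy law it is the paper's argument: the identity $\nabla\tilde H^\top F|_{\mathcal M}=\nabla H^\top f$ in \eqref{eq:verify_pre} is your chain rule, since $\nabla\tilde H(j(x))^\top Dj(x)=\nabla H(x)^\top$ follows from $\tilde H\circ j=H$.

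For the other two claims you take a different, self-contained route. The paper proves neither of them directly. It cites \cite{Lu2025} for the equivalence and only asserts the Casimir property in the surrounding text. Its derivation of \eqref{eq:doty}, and hence of \eqref{eq:F_restriction}, already presupposes that trajectories stay on $\mathcal M$.

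You instead do three things:
\begin{itemize}
\item You get the Casimir property from the one-line computation $\frac{d}{dt}G=N\tilde f-MM^{-1}N\tilde f=0$.
\item You obtain $\varphi'=-M^{-1}N$ by implicitly differentiating $G(x,\varphi(x))\equiv0$, independently of any dynamics.
\item You derive the equivalence as the flow conjugacy $\Phi^F_t\circ j=j\circ\Phi^f_t$, using invariance of $\mathcal M$ and uniqueness of solutions.
\end{itemize}

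This buys a non-circular argument. It also shows exactly where nonsingularity of $\partial_yG$ enters: with a genuine pseudoinverse, $MM^\dagger\neq I$ and $G$ need not be conserved. Finally, it makes clear that $\tilde f\circ j=f$ is a hypothesis on the extension, not a consequence. The paper's version is shorter but leaves these points to the reference.
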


\begin{proof}
	Let \(F(x,y)\) denote the vector field of the extended system. On the constraint manifold \(\mathcal{M}\), \(\tilde H(x,y)\) coincides with the original energy \(H(x)\), and the extended dynamics reduces to the original one in the \(x\)-component. Hence, by construction,
	\begin{equation}\label{eq:verify_pre}
		\nabla \tilde H(x,y)^\top F(x,y)\big|_{\mathcal{M}}
		=
		\nabla H(x)^\top f(x).
	\end{equation}
	Therefore,
	\[
	\nabla \tilde H(x,y)^\top F(x,y)\big|_{\mathcal{M}}
	=
	\begin{cases}
		0, & \text{if the original system is conservative},\\[0.4em]
		\le 0, & \text{if the original system is dissipative}.
	\end{cases}
	\]
	Hence, the extended system inherits the same conservation or dissipation property on \(\mathcal{M}\).
	
	The equivalence between the extended system and the original system on the constraint manifold follows from \cite{Lu2025}.
\end{proof}
In fact, by introducing the Casimir function, the energy function of the dimension-raised system becomes quadratic with respect to the extended variables, while the equivalence with the original system holds pointwise under the Casimir constraint ~\cite{Lu2025}. Applying the implicit midpoint rule in time then yields a scheme that preserves the energy-preserving or energy-dissipasive law. Moreover, the temporal discretization is not restricted to the midpoint rule, but can be extended to more general symplectic Runge--Kutta methods.

Here, we discretize the extended system~\eqref{eq:extendODE} by the implicit midpoint scheme, in the following theorem we prove that the resulting numerical discretization  can preserve the original conservative properties.

\begin{proposition}
	Applying the implicit midpoint method to the extended system~\eqref{eq:extendODE} yields a second-order discretization that exactly preserves the quadratic invariants of the extended system. In particular, if the initial data satisfy the constraint \(G(x_0,y_0)=0\), then the numerical solution remains on the constraint manifold for all time steps. After eliminating the auxiliary variables through the constraint relation, the resulting reduced method is a second-order integrator that exactly preserves the original invariant inherited from the continuous system.
\end{proposition}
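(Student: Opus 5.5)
The argument rests on the classical fact that the implicit midpoint rule preserves every quadratic first integral of the system it is applied to; the proof then transfers this to the constraints and the energy. Write $z=(x^\top,y^\top)^\top$ and denote the scheme by
\[
z_{k+1}=z_k+h\,F(z_{k+1/2}),\qquad z_{k+1/2}=\tfrac12(z_k+z_{k+1}).
\]
Second-order accuracy follows because the implicit midpoint rule is a symmetric one-stage Runge--Kutta method of order two; this only needs $F$ to be smooth near $\mathcal{M}$, which holds when $M(x,y)$ has full column rank there. For a quadratic function $Q(z)=\frac12 z^\top A z+b^\top z+c$ with $A$ symmetric, we have the exact identity
\[
Q(z_{k+1})-Q(z_k)=\nabla Q(z_{k+1/2})^\top(z_{k+1}-z_k)=h\,\nabla Q(z_{k+1/2})^\top F(z_{k+1/2}).
\]
It holds because the midpoint gradient is the exact discrete gradient of a quadratic. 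It follows that every quadratic $Q$ with $\nabla Q^\top F\equiv 0$ on all of $\mathbb{R}^{n+d}$ is conserved, and every quadratic with $\nabla Q^\top F\le 0$ is nonincreasing.

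We apply this identity to each $G_i$ in turn. By construction of the $y$-equation in \eqref{eq:extendODE},
\[
DG\,F=N\tilde f-MM^\dagger N\tilde f=(I-MM^\dagger)N\tilde f.
\]
When $M$ is square and nonsingular, which the definition of a dimension-raising transformation assumes through $\partial_y G$, we get $MM^\dagger=I$. Hence $\nabla G_i^\top F\equiv 0$ everywhere, not only on $\mathcal{M}$. The identity above then gives $G(z_{k+1})=G(z_k)$, and by induction $G(z_k)=G(z_0)=0$ for all $k$. The numerical solution therefore stays on $\mathcal{M}$ and satisfies $y_k=\varphi(x_k)$.

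This is the main obstacle. The Casimir property must hold off the manifold, because the midpoint $z_{k+1/2}$ need not lie on $\mathcal{M}$ even when both endpoints do. If $M$ were merely of full column rank with $d$ rows, $MM^\dagger$ would be the identity and the argument would still go through. In general one must argue that $N\tilde f$ lies in the range of $M$. I would prove this by invoking the nonsingularity of $\partial_yG=M$ in a neighbourhood of $\mathcal{M}$ that contains the midpoints for small $h$.

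The energy statement uses the same identity with $Q=\tilde H$, which is quadratic by Theorem~\ref{thm:quar}. In the conservative case we need $\nabla\tilde H^\top F\le 0$ (respectively $=0$) at the midpoint, again off $\mathcal{M}$. To get it, I would write the extended field in the structured form $F=\tilde{\mathbb M}(z)\nabla\tilde H(z)$, with $\tilde{\mathbb M}$ skew-symmetric or negative semidefinite, obtained by lifting \eqref{eq:gradientODE}. This structure holds for all $z$, which is how the Cahn--Hilliard schemes are built later, and it yields $\tilde H(z_{k+1})-\tilde H(z_k)=h\,\nabla\tilde H^\top\tilde{\mathbb M}\nabla\tilde H\le 0$ (respectively $=0$). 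Combining this with $z_k\in\mathcal{M}$ and $\tilde H(j(x))=H(x)$ gives $H(x_{k+1})\le H(x_k)$ (respectively equality).

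Finally, eliminating $y_k=\varphi(x_k)$ defines a one-step map $x_k\mapsto x_{k+1}$. This reduced map is consistent to second order with $\dot x=f(x)$ by \eqref{eq:F_restriction}, since $F(j(x))=Dj(x)f(x)$ and the $x$-block of the local error is inherited. It also preserves the original $H$ exactly, which completes the argument.
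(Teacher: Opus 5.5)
Your proposal is correct. It has the same skeleton as the paper's proof: the midpoint rule conserves quadratic first integrals, you apply this to $G$ and $\tilde H$, then eliminate $y=\varphi(x)$ and read off order two. The difference is that you justify the step the paper takes for granted.

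The paper cites conservation of quadratic invariants and simply asserts that the original energy is preserved. Its preceding proposition only gives $\nabla\tilde H^\top F=\nabla H^\top f$ on $\mathcal M$. But when both endpoints lie on $\mathcal M$, the midpoint satisfies $G_i(z_{k+1/2})=-\tfrac18(z_{k+1}-z_k)^\top Q^i(z_{k+1}-z_k)$, which is generically nonzero. Your identity $DG\,F=(I-MM^\dagger)N\tilde f$ shows that the constraints are genuine first integrals wherever $M$ is invertible. Your structured form $F=\tilde{\mathbb M}\nabla\tilde H$ is exactly what the energy claim needs. Without it the claim can fail: for an arbitrary $\tilde f$ satisfying only \eqref{eq:dotx}, adding to $\tilde f$ a term that vanishes on $\mathcal M$ generically destroys exact conservation of $\tilde H$, because the midpoint is off $\mathcal M$.

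You should still write the lift down explicitly rather than just assert it. Take $J(z)=\bigl(\begin{smallmatrix} I\\ -M^{-1}N\end{smallmatrix}\bigr)$ and $\tilde f:=\mathbb M(x)\,J^\top\nabla\tilde H$.
\begin{itemize}
\item Then $F=J\,\mathbb M\,J^\top\nabla\tilde H$, which is skew or negative semidefinite whenever $\mathbb M$ is.
\item The implicit function theorem gives $J(j(x))=Dj(x)$, so $\tilde f(j(x))=\mathbb M(x)\nabla H(x)=f(x)$ and this choice is admissible in \eqref{eq:extendODE}.
\item The same structure gives $DG\,J=0$, so constraint preservation and energy preservation hold simultaneously.
\end{itemize}
This is exactly how the isotropic scheme \eqref{fu-dis-scheme:CH-iso} arises, with $\mathbb M=\Delta_h$ and $J^\top\nabla\tilde H=\mu$.

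One correction to your side remark: full column rank gives $M^\dagger M=I$, not $MM^\dagger=I$. What you need is full row rank, i.e.\ $N\tilde f\in\operatorname{range}(M)$. For the square $M=\partial_yG$ of the paper, this is just nonsingularity.
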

\begin{proof}
It is known that the midpoint method preserves the quadratic invariants of system, thus  $G(x, y)$  can preserved, i.e., $G(x_n, y_n)=G(x_{n+1}, y_{n+1}) = 0$. Furthermore, the original energy of system can be preserved exactly.

Due to the implicit function theorem, it is easy to get \(y=\varphi(x)\) at the grid point from \(G(x,y)=0\).
Substituting \(y\) at grid point into midpoint discretization of system~\eqref{eq:extendODE}, leads to an invariant preserving numerical algorithm:
\[
\frac{x_{n+1} - x_n}{h} = \widetilde{f}\left( \frac{x_{n} + x_{n+1}}{2}, \frac{\varphi(x_n) + \varphi(x_{n+1})}{2} \right).
\]
Obviously, it is of order 2 by Taylor expansion in \(h\).
\end{proof}

The above analysis can be extended to the PDE setting. Consider an evolutionary system of the form
\begin{equation}\label{eq:ori_pde}
	u_t \;=\; \mathcal{D}\,\frac{\delta \mathcal{H}}{\delta u},
\end{equation}
where \(\mathcal{D}\) is a differential operator. Let $\operatorname{Pr}^{(n)}u=\bigl(u,\partial_x u,\partial_x^2u,\dots,\partial_x^n u\bigr)$ denote the \(n\)-th prolongation of \(u\). The Hamiltonian functional is defined by
\begin{equation}\label{eq:H_pde}
	\mathcal{H}[u]
	=
	\int_{\Omega} H\bigl(\operatorname{Pr}^{(n)}u\bigr)\,\mathrm{d}x,
\end{equation}
where \(\Omega\subset\mathbb{R}^d\) and \(\mathrm{d}x=\mathrm{d}x_1\cdots \mathrm{d}x_d\).

It follows that
\[
\frac{d}{dt}\mathcal{H}
=
\left( u_t,\, \frac{\delta \mathcal{H}}{\delta u} \right)
=
\left( \mathcal{D}\frac{\delta \mathcal{H}}{\delta u},\, \frac{\delta \mathcal{H}}{\delta u} \right)
\begin{cases}
	=0, & \text{if } \mathcal{D} \text{ is skew-adjoint},\\[0.4em]
	\le 0, & \text{if } \mathcal{D} \text{ is negative semidefinite}.
\end{cases}
\]
Here, \(\delta \mathcal{H}/\delta u\) denotes the variational derivative of \(\mathcal{H}\), and \((\cdot,\cdot)\) denotes the inner product.

\begin{definition}\label{defi:1}
	For the system~\eqref{eq:ori_pde}, the energy functional \(\mathcal{H}[u]\) is called quadratic if its density \(H(\operatorname{Pr}^{(n)}u)\) is a quadratic form in \(\operatorname{Pr}^{(n)}u\), namely,
	\[
	H(\operatorname{Pr}^{(n)}u)
	=
	\frac12\bigl(\operatorname{Pr}^{(n)}u\bigr)^\top
	A\,
	\operatorname{Pr}^{(n)}u,
	\]
	where \(A\) is a symmetric constant matrix.
\end{definition}
For such functionals the implicit midpoint method inherits a precise structure preserving property.
\begin{theorem}
	If \(\mathcal{H}[u]\) is quadratic, then the implicit midpoint discretization of system~\eqref{eq:ori_pde} satisfies
	\[
	\mathcal{H}[u_{n+1}] - \mathcal{H}[u_n]
	=
	\Delta t\,\bigl(\mathcal{D}Su^{n+\frac12},\,Su^{n+\frac12}\bigr).
	\]
	In particular,
	\[ 
	\bigl(DSu^{n+\frac12},\,Su^{n+\frac12}\bigr) \begin{cases} =0, & \text{if } D \text{ is skew-adjoint},\\[0.4em] \le 0, & \text{if } D \text{ is negative semidefinite}. \end{cases} 
	\]
\end{theorem}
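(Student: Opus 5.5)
The plan is to exploit the defining property of a quadratic functional: its difference between two states equals the inner product of the variational derivative at the midpoint with the increment. Here $Su^{n+\frac12}$ is understood as the variational derivative $\frac{\delta\mathcal H}{\delta u}$ evaluated at the midpoint $u^{n+\frac12}=\frac12(u_n+u_{n+1})$. For a quadratic density in the sense of Definition~\ref{defi:1} this is a linear operator: $S=\sum_{k,l}(-\partial_x)^k A_{kl}\partial_x^l$, after integrating by parts. The implicit midpoint scheme for \eqref{eq:ori_pde} therefore reads
\[
\frac{u_{n+1}-u_n}{\Delta t}=\mathcal{D}\,Su^{n+\frac12}.
\]

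\textbf{Step 1 (the exact midpoint identity).} Write $\operatorname{Pr}^{(n)}u_{n+1}=\operatorname{Pr}^{(n)}u^{n+\frac12}+\frac12\operatorname{Pr}^{(n)}\delta$ and $\operatorname{Pr}^{(n)}u_n=\operatorname{Pr}^{(n)}u^{n+\frac12}-\frac12\operatorname{Pr}^{(n)}\delta$, where $\delta=u_{n+1}-u_n$. Since $A$ is symmetric and the density is quadratic,
\[
H(\operatorname{Pr}^{(n)}u_{n+1})-H(\operatorname{Pr}^{(n)}u_n)=(\operatorname{Pr}^{(n)}u^{n+\frac12})^\top A\,\operatorname{Pr}^{(n)}\delta ,
\]
with no remainder term. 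Integrating over $\Omega$ and integrating by parts gives
\[
\mathcal H[u_{n+1}]-\mathcal H[u_n]=\bigl(Su^{n+\frac12},\,u_{n+1}-u_n\bigr).
\]
This requires the boundary terms to vanish, which holds for periodic or homogeneous Neumann-type conditions; I will state this assumption explicitly.

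\textbf{Step 2 (insert the scheme).} Substitute $u_{n+1}-u_n=\Delta t\,\mathcal D Su^{n+\frac12}$. Using the symmetry of the inner product, this gives
\[
\mathcal H[u_{n+1}]-\mathcal H[u_n]=\Delta t\,(\mathcal D Su^{n+\frac12},Su^{n+\frac12}).
\]

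\textbf{Step 3 (the two cases).} If $\mathcal D$ is skew-adjoint, then $(\mathcal Dv,v)=-(v,\mathcal Dv)$, so $(\mathcal Dv,v)=0$. If $\mathcal D$ is negative semidefinite, then $(\mathcal Dv,v)\le 0$ by definition. Applying either fact with $v=Su^{n+\frac12}$ gives the sign statement.

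\textbf{Main obstacle.} The only delicate point is Step 1. It relies on the identification of $S$ as the linear variational derivative and on the vanishing of the boundary terms in the integration by parts. If $A$ contains mixed-derivative couplings, I will verify the identity entrywise via $a^\top A b$ symmetry, which holds because $A$ is constant and symmetric. I will also note that if the density contains a linear term, the identity is unchanged, since linear terms are also evaluated exactly by the midpoint difference.
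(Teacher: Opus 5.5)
Your proposal is correct and follows essentially the same route as the paper: write the midpoint scheme as $(u_{n+1}-u_n)/\Delta t=\mathcal{D}Su^{n+\frac12}$ with $S$ the linear variational derivative, use the exact identity $\mathcal{H}[u_{n+1}]-\mathcal{H}[u_n]=\bigl(u_{n+1}-u_n,\,Su^{n+\frac12}\bigr)$ for quadratic functionals, and substitute. Your Step 1 makes explicit two things the paper's proof asserts without comment: the polarization computation behind that identity, and the assumption (periodic or Neumann-type boundary conditions) that makes the boundary terms vanish.
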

\begin{proof}
	Applying the implicit midpoint rule to system~\eqref{eq:ori_pde} gives
	\[
	\frac{u_{n+1}-u_n}{\Delta t}
	=
	\mathcal{D}\left.\frac{\delta \mathcal H}{\delta u}\right|_{u^{n+\frac12}},
	\qquad
	u^{n+\frac12}:=\frac{u_n+u_{n+1}}{2}.
	\]
	Since \(\mathcal H[u]\) is quadratic, its variational derivative depends linearly on \(u\). Hence there exists a symmetric linear operator \(S\) such that
	\[
	\frac{\delta \mathcal H}{\delta u}=Su.
	\]
	Therefore, ${(u_{n+1}-u_n)}/{\Delta t}=\mathcal{D}Su^{n+\frac12}.$
	Multiplying this with $\mathcal{S} u$ leads to
	\[
	\mathcal H[u_{n+1}]-\mathcal H[u_n]
	=\bigl(u_{n+1}-u_n,\; Su^{n+\frac12}\bigr)
	=\Delta t\,\bigl(\mathcal{D}Su^{n+\frac12},\,Su^{n+\frac12}\bigr).
	\]
	The conclusion follows immediately.
\end{proof}
\begin{theorem}
Assume that the following PDE
\begin{equation}\label{eq:pde}
	u_t={\mathcal P}(\operatorname{Pr}^{(n)}u)
\end{equation}
has a rational-like density function. Then, there exist auxiliary variables
$\mathbf y(x)\in \mathbb{R}^m$ determined by the dimension-raising functions $G_i\left(\operatorname{Pr}^{(n)}u,{\mathbf y}\right)$
such that on extended space $(\operatorname{Pr}^{(n)}u,{\mathbf y})$
we can get  a new system with quadratic functional
\[
\widetilde{\mathcal H}[u,\mathbf{y}]
=\int_{\Omega}
\widetilde H\left(\bigl(\operatorname{Pr}^{(n)}u, \mathbf{y}\right)\,\mathrm{d}x,
\]
and it satisfies $\widetilde{\mathcal H}[u,\mathbf{y}]=\;\mathcal H[u].$
\end{theorem}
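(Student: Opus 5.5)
The plan is to reduce the PDE statement to the pointwise (finite-dimensional) quadratization of Theorem~\ref{thm:quar}, applied to the density $H$ with the jet variables as coordinates, and then integrate over $\Omega$.

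\textbf{Step 1: freeze the spatial point.} For each fixed $x\in\Omega$, regard the prolongation $\operatorname{Pr}^{(n)}u(x)$ as a point $\xi\in\mathbb{R}^N$, where $N$ counts the components $u,\partial_x u,\dots,\partial_x^n u$. By hypothesis the density $H(\xi)$ is rational-like in the sense of Definition~\ref{defn:rational}: there is a nonzero polynomial $P(\xi,y)$ with $P(\xi,H(\xi))\equiv 0$.

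\textbf{Step 2: quadratize the density.} Apply Theorem~\ref{thm:quar} to $H:\mathbb{R}^N\to\mathbb{R}$. This yields an integer $m$, quadratic constraint maps $G_i(\xi,\mathbf y)$, $i=1,\dots,m$, and a quadratic $\widetilde H(\xi,\mathbf y)$. The construction follows the proof of that theorem: pair factors for monomials, use $xy-1$ for reciprocals, and use the binary chain of Algorithm~\ref{alg:recursive-auxiliary} for roots. These satisfy $\widetilde H(\xi,\mathbf y)=H(\xi)$ whenever $G(\xi,\mathbf y)=0$.

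The key observation is that the construction depends only on the algebraic expression of $H$, not on $x$. Hence the same $G_i$ and $\widetilde H$ serve at every point of $\Omega$.

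\textbf{Step 3: define the auxiliary fields.} Define $\mathbf y(x)=\varphi(\operatorname{Pr}^{(n)}u(x))$, where $\varphi$ is the local solution of $G(\xi,\varphi(\xi))=0$. This solution exists by the implicit function theorem, using the nonsingularity of $\partial_{\mathbf y}G$ assumed in the definition of a dimension-raising transformation. In fact the constructions in Theorem~\ref{thm:quar} give $\varphi$ explicitly, for example $y=\xi_a\xi_b$, $y=1/\xi_a$, or $u_0=\xi_a^{1/q}$, so no abstract argument is really needed.

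\textbf{Step 4: integrate.} Since $(\operatorname{Pr}^{(n)}u(x),\mathbf y(x))$ lies on $\mathcal M$ for every $x$, pointwise equality gives
\[
\widetilde{\mathcal H}[u,\mathbf y]=\int_\Omega \widetilde H\bigl(\operatorname{Pr}^{(n)}u,\mathbf y\bigr)\,\mathrm{d}x=\int_\Omega H\bigl(\operatorname{Pr}^{(n)}u\bigr)\,\mathrm{d}x=\mathcal H[u].
\]
The extended system is then the PDE analogue of \eqref{eq:extendODE}. We keep $u_t=\mathcal P$ and evolve $\mathbf y$ by $\mathbf y_t=-M^\dagger N\,\partial_t\operatorname{Pr}^{(n)}u$. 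This makes $G$ a Casimir, exactly as in the ODE proposition.

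\textbf{Main obstacle: admissibility of $\mathbf y$.} The hard step is making sure $\mathbf y$ is well defined and integrable on all of $\Omega$, not just locally. Three issues arise:
\begin{itemize}
\item Reciprocal variables $1/\xi_a$ require $\xi_a\neq 0$; for example, anisotropy functions involve $|\nabla u|^{-1}$.
\item Even roots require nonnegative arguments and a choice of branch.
\item $\partial_{\mathbf y}G$ may become singular.
\end{itemize}
I would handle this by restricting to solutions for which the denominators and radicands stay away from their singular sets, and by choosing the principal real branch in Algorithm~\ref{alg:recursive-auxiliary}. In practice one adds a small regularization, for instance $|\nabla u|^2+\epsilon$. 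Under this restriction $\varphi$ is smooth along the solution, so $\mathbf y$ inherits the regularity of $\operatorname{Pr}^{(n)}u$ and the integral identity holds.

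A secondary point is that $\widetilde H$ should be quadratic in the sense of Definition~\ref{defi:1} with respect to the extended jet. This holds because Theorem~\ref{thm:quar} yields a quadratic $\widetilde H$ with constant coefficients; the only requirement is to carry the terminal product reduction down to degree two.
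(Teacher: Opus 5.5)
Your proposal is correct and takes the same route as the paper. The paper's proof identifies the components of $\operatorname{Pr}^{(n)}u$ with the coordinates in Theorem~\ref{thm:quar} and applies that quadratization pointwise. Your explicit integration over $\Omega$, which uses the fact that the same $G_i$ and $\widetilde H$ serve at every $x$, makes precise what the paper leaves implicit, as do your caveats about vanishing denominators, root branches, and singular $\partial_{\mathbf y}G$.

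One small overstatement is in your final remark. Theorem~\ref{thm:quar} generally yields an inhomogeneous quadratic (e.g.\ $\widetilde H=y$ for $H=1/x$), not a quadratic form in the sense of Definition~\ref{defi:1}. This is harmless, because the midpoint rule also preserves inhomogeneous quadratic invariants.
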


\begin{proof}
Denote $u_i$ be the \(i\)-th component of \(\operatorname{Pr}^{(n)}u\), then $\operatorname{Pr}^{(n)}u=(u_0,u_1,\cdots, u_n)$ with $u_0=u$. The rational-like density $H(\operatorname{Pr}^{(n)}u)$ has the same structure as in Theorem~\ref{thm:quar}. Therefore, we apply the same quadratization procedure to obtain a quadratic extended representation.
\end{proof}

On the extended space, the system yields
\[
\left\{
\begin{aligned}
		&u_t = D\left(\frac{\partial \widetilde{H}}{\partial u}
		+\sum_{i=1}^n (-1)^i D_x^{(i)}\!\left( \frac{\partial \widetilde{H}}{\partial u^{(i)}} \right)
		+\frac{\partial \widetilde{H}}{\partial y}\frac{\partial y}{\partial u}
		+\sum_{i=1}^n (-1)^i D_x^{(i)}\!\left( \frac{\partial \widetilde{H}}{\partial y}\frac{\partial y}{\partial u^{(i)}} \right) \right)\\
		&\frac{\partial G}{\partial y} y_t=-\left(\frac{\partial G}{\partial u}u_t
		+ \sum_{i=1}^n(-1)^i D_x^{(i)}\!\left( \frac{\partial G}{\partial u_x^{(i)}} u_t \right)\right).
\end{aligned}\right.\]
Then when the system is discretized in time using the midpoint rule, both the quadratic invariant and the corresponding weak invariants are exactly preserved.

This section has developed Quadratic Conserving Elevation (QCE) method. Its main idea is to quadraticize the nonlinear energy through suitable auxiliary variables, leading to an equivalent extended system with quadratic Casimir quantities.
The midpoint discretization of the extended system, after eliminating the auxiliary variables, preserves the original conservative or dissipative structure.
\section{Numerical discretization for CH system}
The Cahn-Hilliard (CH) equation \cite{cahn1958} describes the dynamics of phase separation, driven by a diffuse-interface free energy.
The isotropic CH equation reads
\begin{equation}\label{eq:CH-iso}
	\left\{
	\begin{aligned}
		&u_t=\nabla\!\cdot\!\big(M\nabla\mu\big),
		\qquad (x,t)\in \Omega \times (0,T],\\
		&\mu=-\Delta u+\varepsilon^{-2}F'(u)+\beta\,\Delta^2 u,
	\end{aligned}
	\right.
\end{equation}
where \(0<\varepsilon\ll1\) is the interfacial thickness, \(\beta\ge0\) is the regularization strength, $F(u)$ is the bulk potential.
Its free energy functional is given by
\begin{equation}\label{eq:CH-iso-e}
	E_{\mathrm{iso}}(u)
	=\frac12(\nabla u,\nabla u)+\varepsilon^{-2}(F(u),1)+\frac{\beta}{2}(\Delta u,\Delta u),
\end{equation}
where \((\cdot,\cdot)\) denotes the \(L^2(\Omega)\) inner product. namely, \((\phi,\psi)=\int_\Omega \phi(x)\psi(x)\,\mathrm{d}x\). It is known that
\begin{align}\label{eq:EnDis}
	\frac{\mathrm d}{\mathrm d t}E_{\mathrm{iso}}(u)
	=\int_\Omega \mu\,u_t\,\mathrm d x
	=\int_\Omega \mu\,\nabla\!\cdot\!\bigl(M\nabla\mu\bigr)\,\mathrm d x
	=-M\int_\Omega |\nabla\mu|^2\,\mathrm d x\le 0
\end{align}
which indicates the system is dissipative. For Eq.\eqref{eq:CH-iso}, we apply the QCE method in time and the Fourier pseudo-spectral discretization in space. In this section, we show that the resulting numerical discretizations can preserve the dissipative property of the CH equation.

Let $\Omega=[a_x,b_x]\times[a_y,b_y]$, $h_x$ and $h_y$ be the grid sizes in the $x$ and $y$ directions, resp.
Set $\Omega_h
=
\{(x_j,y_k)| x_j=a_x+jh_x,\;
y_k=a_y+kh_y\}$ and define $V_h=\{U|U=\{U_{jk}\},(x_j,y_k)\in\Omega_h \}$
as the space of grid functions on $\Omega_h$. Denote \(D_x\) and \(D_y\) as the first-order Fourier pseudo-spectral differentiation matrices \cite{Lu2024}. 
Take \(M=1\) and $F(u)=\frac14(u^2-1)^2$, we have \(F'(u)=u^3-u\). Employing the Fourier pseudo-spectral
method for system~\eqref{eq:CH-iso} in space gives
\begin{equation}\label{eq:CH-iso-semi-dis}
	U_t
	-\Delta_h\Bigl[
	-\Delta_hU+\varepsilon^{-2}(U^{\odot 3}-U)+\beta\Delta_h^2U
	\Bigr]=0,
\end{equation} 
for $U\in V_h$. Here,   $\Delta_h$  denotes the discrete Laplacian which is  $\Delta_h U = D_x^2U + U(D_y^\top)^2$,  and \(\odot\) denotes the Hadamard product.\footnote{For any $U, V\in V_h$, the Hadamard product is defined by $(U\odot V)_{ij}=U_{ij}V_{ij}$, $U^{\odot 2}=U\odot U$.}

By introducing the auxiliary variable $q=\frac12(u^2-1)$,
we apply the QCE method to system~\eqref{eq:CH-iso-semi-dis}  to obtain the following fully-discrete formulation:
\begin{equation}\label{fu-dis-scheme:CH-iso}
	\left\{
	\begin{aligned}
		\delta_t U^{n+1} &= \Delta_h \mu^{n+\frac12},\\
		\mu^{n+\frac12}
		&=
		-\Delta_h U^{n+\frac12}
		+2\varepsilon^{-2}U^{n+\frac12}\odot Q^{n+\frac12}
		+\beta \Delta_h^2 U^{n+\frac12},\\
		\delta_t Q^{n+1} &= U^{n+\frac12}\odot \delta_t U^{n+1},
	\end{aligned}
	\right.
\end{equation}
where $\delta_t Z^n=\frac{Z^{n+1}-Z^n}{\Delta t},Z^{n+\frac12}=\frac{Z^{n+1}+Z^n}{2}$ and $Q=\frac12\bigl(U^{\odot2}-1\bigr)$. Define the discrete energy corresponding to \eqref{eq:CH-iso-e} as
\begin{equation}\label{eq:E-iso-h}
	E_{\mathrm{iso},h}(U)
	=
	-\frac12(\Delta_hU,U)_h
	+\varepsilon^{-2}(F(U),\mathbf1)_h
	+\frac{\beta}{2}\|\Delta_hU\|_h^2,
\end{equation}
where $(U,V)_h=h_xh_y\sum U_{jk}V_{jk}$ is the discrete inner product on $V_h$,  $\|U\|_h^2=(U,U)_h$, and \(F(U)\) is defined pointwise, i.e., \((F(U))_{jk}=F(U_{jk})\). Here, \(\mathbf1\) denotes the grid function with all entries equal to \(1\). Then, we have the following theorem.

\begin{theorem}\label{thm:CH-iso-e-dis}
	For the isotropic CH equation \eqref{eq:CH-iso}, the numerical discretization ~\eqref{fu-dis-scheme:CH-iso}  can preserve the dissipative  property of  system, which is
	\[ E_{\mathrm{iso},h}(U^{n+1}) \le E_{\mathrm{iso},h}(U^n) \]
	with  $E_{\mathrm{iso},h}$ defined in Eq.~\eqref{eq:E-iso-h}.
\end{theorem}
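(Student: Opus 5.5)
The plan is a discrete energy method: take the discrete inner product of the first equation of \eqref{fu-dis-scheme:CH-iso} with $\mu^{n+\frac12}$ and of the definition of $\mu^{n+\frac12}$ with $\delta_t U^{n+1}$. The first product gives
\[
(\delta_t U^{n+1},\mu^{n+\frac12})_h=(\Delta_h\mu^{n+\frac12},\mu^{n+\frac12})_h\le 0,
\]
because $\Delta_h=D_x^2+(\cdot)(D_y^\top)^2$ is symmetric negative semidefinite under the periodic Fourier pseudo-spectral setting (the differentiation matrices $D_x,D_y$ are skew-symmetric). The second product splits into three terms, which I will identify one by one as exact differences of the three parts of the energy $E_{\mathrm{iso},h}$.

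For the quadratic terms, the symmetry of $\Delta_h$ gives
\[
(-\Delta_h U^{n+\frac12},\delta_tU^{n+1})_h=\frac{1}{\Delta t}\Bigl[-\tfrac12(\Delta_hU^{n+1},U^{n+1})_h+\tfrac12(\Delta_hU^n,U^n)_h\Bigr].
\]
Similarly, using $\Delta_h^2$ symmetric and $(\Delta_h^2U,U)_h=\|\Delta_hU\|_h^2$, the $\beta$ term becomes $\frac{\beta}{2\Delta t}(\|\Delta_hU^{n+1}\|_h^2-\|\Delta_hU^n\|_h^2)$. These two identities are just the midpoint rule applied to quadratic functionals, as in the earlier theorem on quadratic $\mathcal H$.

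The nonlinear term is the key step. Using the third equation of the scheme,
\[
2\varepsilon^{-2}(U^{n+\frac12}\odot Q^{n+\frac12},\delta_tU^{n+1})_h=2\varepsilon^{-2}(Q^{n+\frac12},\delta_tQ^{n+1})_h=\frac{\varepsilon^{-2}}{\Delta t}\bigl(\|Q^{n+1}\|_h^2-\|Q^n\|_h^2\bigr).
\]
I then need $\|Q^k\|_h^2=(Q^{k\odot2},\mathbf1)_h=4\,(F(U^k),\mathbf1)_h$, which holds exactly when $Q^k=\frac12((U^k)^{\odot2}-1)$. This requires the Casimir constraint to be preserved: if $Q^n=\frac12((U^n)^{\odot2}-1)$, the pointwise identity $\frac12((U^{n+1})^2-(U^n)^2)=U^{n+\frac12}(U^{n+1}-U^n)$ together with the third equation gives the constraint at step $n+1$. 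By induction from the initialization, the constraint holds at every step, in line with the proposition on midpoint preservation of quadratic invariants.

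There is a factor bookkeeping issue here. $\varepsilon^{-2}\|Q\|_h^2=\varepsilon^{-2}\cdot\frac14((U^2-1)^2,\mathbf1)_h=\varepsilon^{-2}(F(U),\mathbf1)_h$, but the scheme carries the coefficient $2\varepsilon^{-2}$, which produces $2\varepsilon^{-2}(F,\mathbf 1)_h$ rather than $\varepsilon^{-2}(F,\mathbf 1)_h$. I expect this mismatch to be the main obstacle. I would first reconcile it by noting that $F'(u)=u^3-u=2uq$ with $q=\frac12(u^2-1)$, so the coefficient $2$ is correct. Then $2(Q^{n+\frac12},\delta_tQ^{n+1})_h=\delta_t\|Q\|_h^2$ has to be compared with $F=q^2$, i.e. $\|Q\|_h^2=(F,\mathbf1)_h$. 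Indeed $q^2=\frac14(u^2-1)^2=F$, so $\|Q\|_h^2=(F(U),\mathbf 1)_h$ with no extra factor and the bookkeeping closes. Summing the three contributions gives $E_{\mathrm{iso},h}(U^{n+1})-E_{\mathrm{iso},h}(U^n)=\Delta t\,(\Delta_h\mu^{n+\frac12},\mu^{n+\frac12})_h\le0$.
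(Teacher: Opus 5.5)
Your proof is correct and follows the paper's argument. Rewrite the bulk term as $\varepsilon^{-2}\|Q\|_h^2$; since $Q^{\odot 2}=F(U)$ exactly, your intermediate claim $\|Q^k\|_h^2=4(F(U^k),\mathbf 1)_h$ is a slip and the ``factor mismatch'' is a false alarm, as you yourself conclude. The midpoint identities then give $E_{\mathrm{iso},h}(U^{n+1})-E_{\mathrm{iso},h}(U^n)=\Delta t\,(\mu^{n+\frac12},\delta_tU^{n+1})_h=-\Delta t\bigl(\|D_x\mu^{n+\frac12}\|_h^2+\|\mu^{n+\frac12}D_y^\top\|_h^2\bigr)\le 0$. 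Your induction showing that the third equation propagates $Q^n=\tfrac12\bigl((U^n)^{\odot2}-\mathbf 1\bigr)$ is a useful step that the paper simply takes for granted.
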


\begin{proof}
	Note that $Q=\frac12\bigl(U^{\odot2}-1\bigr)$, the discrete energy can be rewritten as
\begin{equation}\label{eq:e-iso-uq}
	\begin{aligned}
		E_{\mathrm{iso},h}(U)
		&=
		-\frac12(\Delta_hU,U)_h+\varepsilon^{-2}\|Q\|_h^2+\frac{\beta}{2}\|\Delta_hU\|_h^2.
	\end{aligned}
\end{equation}
Noting $\delta_t Q^{n+1}=U^{n+\frac12}\odot \delta_t U^{n+1}$, it follows from~\eqref{eq:e-iso-uq} that
\begin{equation}\label{eq:iso-energy-diff-detail}
	\begin{aligned}
		&\frac{E_{\mathrm{iso},h}(U^{n+1})-E_{\mathrm{iso},h}(U^n)}{\Delta t}\\
		={}&
		\bigl(-\Delta_hU^{n+\frac12},\delta_t U^{n+1}\bigr)_h
		+2\varepsilon^{-2}\bigl(U^{n+\frac12}\odot Q^{n+\frac12},\delta_t U^{n+1}\bigr)_h
		+\beta\bigl(\Delta_h^2U^{n+\frac12},\delta_t U^{n+1}\bigr)_h\\
		={}&
		\bigl(\mu^{n+\frac12}, \delta_t U^{n+1}\bigr)_h.
	\end{aligned}
\end{equation}
	By the first and second identities in Eq.~\eqref{fu-dis-scheme:CH-iso} and discrete integration by parts, we obtain
\[\frac{E_{\mathrm{iso},h}(U^{n+1})-E_{\mathrm{iso},h}(U^{n})}{\Delta t}=
-\|D_x\mu^{n+\frac12}\|_h^2-\|\mu^{n+\frac12}D_y^\top\|_h^2 \\
\le{} 0.\]
This completes the proof.
\end{proof}

The anisotropic Cahn-Hilliard equation extends the classical Cahn-Hilliard model by incorporating directional dependence in the interfacial energy \cite{ma2006implementation}. It can be described as
\begin{equation}\label{eq:CH-ani}
	\left\{
	\begin{aligned}
		&u_t = \nabla\!\cdot\!\big(M\nabla\mu\big),
		\qquad (x,t)\in \Omega \times (0,T],\\
		&\mu=
		-\,\nabla\!\cdot\big(\Gamma(\theta)\,\nabla u\big)
		+\frac{\Gamma(\theta)}{\varepsilon^2}F'(u)
		+\beta\,\Gamma(\theta)\,\Delta^2 u,
	\end{aligned}
	\right.
\end{equation}
where $\Gamma(\theta)$ is the anisotropy factor with $\theta=\operatorname{atan2}(u_y,u_x)$.
The corresponding energy can be written as
\begin{equation}\label{eq:E-ani}
	E_{\mathrm{ani}}(u)
	=
	\frac12 (\nabla u,\nabla u)_\Gamma
	+\varepsilon^{-2}(F(u),1)_\Gamma
	+\frac{\beta}{2}(\Delta u,\Delta u)_\Gamma,
\end{equation}
where $(\phi,\psi)_\Gamma$ denotes the $\Gamma$-weighted $L^2(\Omega)$ inner product defined by $(\phi,\psi)_\Gamma =\int_\Omega \Gamma(\Theta)\,\phi\,\psi\,\mathrm{d}x$.
Clearly, the energy is dissipative:
\[
\frac{\rm d}{\rm dt}E_{\mathrm{ani}}(u)=-M\int_\Omega |\nabla\mu|^2\,\rm d x\le 0.
\]

For $U\in V_h$, denote  $\Gamma(\Theta)=\Gamma(\Theta(U_{jk}))$  with $\Theta(U_{jk})=\operatorname{atan2}((U_{jk})_y,(U_{jk})_x)$ the discrete orientation.
Applying the Fourier pseudo-spectral method in space to~\eqref{eq:CH-ani}, yields the semi-discrete numerical discretization
\begin{equation}\label{eq:semi-discrete-ani}
	U_t-\Delta_h\Psi(U)=0,
\end{equation}
where $\Psi(U)
=-D_x\,\bigl( \Gamma(\Theta)\odot D_xU\bigr)
-\bigl( \Gamma(\Theta)\odot (UD_y^\top)\bigr)D_y^\top+\varepsilon^{-2} \Gamma(\Theta)\odot f(U)
+\beta\, \Gamma(\Theta)\odot \Delta_h^2U.$
Let the discrete weighted norm be $|U|_{1,h,\Gamma}^2=
\bigl( \Gamma(\Theta)\odot U_x,\,U_x\bigr)_h
+
\bigl( \Gamma(\Theta)\odot U_y,\,U_y\bigr)_h,$
and $\|\Delta_h U\|_{h,\Gamma}^2=\bigl( \Gamma(\Theta)\odot \Delta_h U,\,\Delta_h U\bigr)_h$, then
the discrete anisotropic energy is defined by
\begin{equation}\label{eq:E-ani-h}
	E_{\mathrm{ani},h}(U)
	=
	\frac12 |U|_{1,h,\Gamma}^2
	+\varepsilon^{-2}\bigl( \Gamma(\Theta)\odot F(U),\,\mathbf 1\bigr)_h
	+\frac{\beta}{2}\|\Delta_h U\|_{h,\Gamma}^2.
\end{equation}
For the anisotropic case, using the QCE method to \eqref{eq:semi-discrete-ani} to gain the following fully discrete scheme
\begin{equation}\label{eq:CH-ani-aux}
	\left\{
	\begin{aligned}
		\delta_t U^{n+1} &= \Delta_h \mu^{n+\frac12},\\
		\delta_t Y_1^{n+1}
		&=2\,U_x^{n+\frac12}\odot \delta_t U_x^{n+1},\\
		\delta_t Y_2^{n+1}
		&=2\,U_y^{n+\frac12}\odot \delta_t U_y^{n+1},\\
		\delta_t Y_3^{n+1}
		&=2\bigl(Y_1^{n+\frac12}+Y_2^{n+\frac12}\bigr)
		\odot \bigl(\delta_t Y_1^{n+1}+\delta_t Y_2^{n+1}\bigr),\\
		\delta_t Y_4^{n+1}
		&=-\,Y_4^{n+1}\odot Y_4^n\odot \delta_t Y_3^{n+1},\\
		\delta_t Y_5^{n+1}
		&=2\,Y_1^{n+\frac12}\odot \delta_t Y_1^{n+1}
		+2\,Y_2^{n+\frac12}\odot \delta_t Y_2^{n+1},\\
		\delta_t Y_6^{n+1}
		&=Y_4^{n+\frac12}\odot \delta_t Y_5^{n+1}
		+Y_5^{n+\frac12}\odot \delta_t Y_4^{n+1},\\
		\delta_t Z_1^{n+1}
		&=U^{n+\frac12}\odot \delta_t U^{n+1},\\
		\delta_t Z_2^{n+1}
		&=2\,Z_1^{n+\frac12}\odot \delta_t Z_1^{n+1},\\
		\delta_t \Phi_1^{n+1}
		&=2\,\Delta_hU^{n+\frac12}\odot \Delta_h(\delta_t U^{n+1}),
	\end{aligned}
	\right.
\end{equation}
The more detail can be seen in Appendix~\ref{pf:ch-iso-aux}. The following theorem shows that the proposed scheme preserves the energy dissipation.
\begin{theorem}\label{thm:ani-diss-law}
	For the  anisotropic CH equation \eqref{eq:CH-ani}, the numerical discretization
	\eqref{eq:CH-ani-aux} can preserve the dissipative property of system
	\[ E_{\mathrm{ani},h}(U^{n+1}) \le E_{\mathrm{ani},h}(U^n), \]
	where $E_{\mathrm{ani},h}$ is defined in Eq.~\eqref{eq:E-ani-h}.
\end{theorem}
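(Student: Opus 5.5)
The proof follows the isotropic case (Theorem~\ref{thm:CH-iso-e-dis}). We first rewrite $E_{\mathrm{ani},h}$ on the constraint manifold as a quadratic (indeed bilinear) expression in the auxiliary variables, then show the scheme \eqref{eq:CH-ani-aux} produces a discrete chain rule reducing the energy increment to $(\mu^{n+\frac12},\delta_t U^{n+1})_h$, and finally use $\delta_t U^{n+1}=\Delta_h\mu^{n+\frac12}$.

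\textbf{Step 1 (reformulation).} We read off the intended invariants from \eqref{eq:CH-ani-aux} and Appendix~\ref{pf:ch-iso-aux}:
\begin{gather*}
Y_1=U_x^{\odot2},\quad Y_2=U_y^{\odot2},\quad Y_3=(Y_1+Y_2)^{\odot2}=|\nabla_hU|^{4},\quad Y_4=Y_3^{\odot(-1)},\\
Y_5=Y_1^{\odot2}+Y_2^{\odot2},\quad Y_6=Y_4\odot Y_5,\quad Z_1=\tfrac12(U^{\odot2}-1),\quad Z_2=Z_1^{\odot2},\quad \Phi_1=(\Delta_hU)^{\odot2}.
\end{gather*}
For a fourfold-type anisotropy, $\Gamma(\Theta)$ is a rational function of $U_x,U_y$; for instance $\cos 4\theta$ is an affine function of $(U_x^4+U_y^4)/|\nabla U|^4=Y_6$. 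Hence $\Gamma=a+bY_6$, and every energy density term is a product of two variables:
\[
\Gamma\odot(Y_1+Y_2),\qquad \Gamma\odot Z_2,\qquad \Gamma\odot\Phi_1 .
\]
Thus $E_{\mathrm{ani},h}=\widetilde E_h(U,Y,Z,\Phi)$ holds whenever the constraints hold.

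\textbf{Step 2 (constraints are preserved).} Each update in \eqref{eq:CH-ani-aux} is the exact discrete product rule for its constraint, and we verify these by induction from consistent initial data. Indeed $\delta_t(a^{\odot2})=2a^{n+\frac12}\odot\delta_t a$ and $\delta_t(ab)=a^{n+\frac12}\odot\delta_t b+b^{n+\frac12}\odot\delta_t a$. For the reciprocal, $\delta_t(1/c)=-\frac{1}{c^{n+1}c^n}\,\delta_t c$, which is exactly the $Y_4$ equation once we use $Y_4^{k}=1/Y_3^{k}$. Consequently $Y_i^{n},Z_i^{n},\Phi_1^{n}$ equal the prescribed functions of $U^n$ at every step, and $E_{\mathrm{ani},h}(U^n)=\widetilde E_h^n$.

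\textbf{Step 3 (energy identity).} Using the same product rule on $\widetilde E_h$, we get
\[
\frac{\widetilde E_h^{n+1}-\widetilde E_h^n}{\Delta t}=\tfrac12\bigl(\Gamma^{n+\frac12},\delta_t(Y_1+Y_2)\bigr)_h+\tfrac12\bigl(Y_1^{n+\frac12}+Y_2^{n+\frac12},\delta_t\Gamma\bigr)_h+\cdots .
\]
We then chain each $\delta_t Y_i$, $\delta_t Z_i$, $\delta_t\Phi_1$ back to $\delta_t U^{n+1}$, $\delta_t U_x^{n+1}$, $\delta_t U_y^{n+1}$, $\Delta_h\delta_t U^{n+1}$. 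Applying discrete summation by parts for $D_x$, $D_y$, $\Delta_h$ (skew/symmetric under $(\cdot,\cdot)_h$ with periodicity), everything collapses to $(\mu^{n+\frac12},\delta_tU^{n+1})_h$. Here $\mu^{n+\frac12}$ is the discrete chemical potential defined by the scheme as the collected coefficient; this is precisely how the first equation of \eqref{eq:CH-ani-aux} is meant to be read per the appendix. Then
\[
(\mu^{n+\frac12},\Delta_h\mu^{n+\frac12})_h=-\|D_x\mu^{n+\frac12}\|_h^2-\|\mu^{n+\frac12}D_y^\top\|_h^2\le0 .
\]

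\textbf{Main obstacle.} The main difficulty is bookkeeping: ensuring that $\mu^{n+\frac12}$ really is the variational coefficient produced by this chain, and in particular that the $\Gamma$-derivative terms (through $Y_4$, $Y_5$, $Y_6$, with the non-midpoint factor $Y_4^{n+1}\odot Y_4^n$) are included consistently. We would write $\mu^{n+\frac12}$ explicitly as the adjoint-chain expression and check it term by term. A second, smaller point is that the reciprocal step requires $|\nabla_hU|\neq0$ pointwise, which we assume, as the definition of $\Theta$ already does.
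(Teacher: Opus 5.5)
Your proposal is correct and follows the paper's route. You expand the energy increment by the discrete product rule and use $\delta_t\Gamma=4\alpha\,\delta_t Y_6$, chaining it through the $Y_4,Y_5,Y_6$ updates back to $\delta_t U_x^{n+1},\delta_t U_y^{n+1}$; you then collect coefficients into $\mu^{n+\frac12}$ after summation by parts and conclude with $(\mu^{n+\frac12},\Delta_h\mu^{n+\frac12})_h\le 0$. Two refinements over the paper are worth keeping: your explicit check that each auxiliary update is an exact discrete product or reciprocal rule (so the constraints persist) is only implicit there, and your multiplying $\delta_t\Gamma$ by the averaged factors $(Y_1+Y_2)^{n+\frac12}$, $Z_2^{n+\frac12}$, $\Phi_1^{n+\frac12}$ gives the exact identity, whereas the paper's $\Psi^{n+\frac12}$ uses $(U_x^{n+\frac12})^{\odot2}$ and $F(U^{n+\frac12})$.
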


A proof is provided in Appendix~\ref{pf:ani-diss-law}. 

Using the above numerical scheme,  we employ Mann iteration \cite{Mann1953}:
\begin{equation}\label{eq:relaxed_fp} X^{(m+1)}=(1-\omega)X^{(m)}+\omega T\!\bigl(X^{(m)}\bigr), \qquad m=0,1,2,\dots, \end{equation}
where \(\omega \in (0,1]\). 
We next show that the iteration
~\eqref{eq:relaxed_fp} 
converges.
Let $X=(u,y(u))$ collect the original the auxiliary variables. Consider the differential system of the form
\begin{equation}\label{eq:lin-non-sys}
	X_t = \mathcal{L} X + \mathcal{N}(X).
\end{equation}
We use Frouier pseudo-spectral method in spatial.
Let \(X^n\) be the approximation at time level \(t_n\). The Fourier transform of a function \(f\) is denoted by \(\widehat{f}\), \(\mathcal F^{-1}\) denotes the inverse Fourier transform, and define the map \(T\)
\begin{equation}\label{eq:Tdef_once}
	T(X)
	=
	\mathcal{F}^{-1}\!\left[
	\bigl(I-\tfrac12\Delta t\,\widehat{\mathcal L}\bigr)^{-1}
	\left(
	\bigl(I+\tfrac12\Delta t\,\widehat{\mathcal L}\bigr)\widehat{X^n}
	+\Delta t\,\widehat{\mathcal N\!\left(\frac{X+X^n}{2}\right)}
	\right)
	\right].
\end{equation}

The following result gives a sufficient condition for the convergence of the iteration.

\begin{theorem}\label{thm:mann_midpoint}
	Assume that all eigenvalues of \(\mathcal L\) are non-positive, and \(\mathcal N\) is Lipschitz continuous on a nonempty closed convex set \(\mathcal C\) with Lipschitz constant \(L_{\mathcal N}\). If \(\Delta t\,L_{\mathcal N}<2\). Then the map \(T\) has a unique fixed point \(X^\ast\in\mathcal C\).
\end{theorem}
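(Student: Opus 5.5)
We will show that \(T\) is a contraction and then apply the Banach fixed point theorem. The map \(T\) is defined by \eqref{eq:Tdef_once}. Throughout we use the discrete \(L^2\) norm. Because the Fourier transform is unitary on the grid (Parseval), we may estimate everything in Fourier space.

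\textbf{Step 1: the linear resolvent is nonexpansive.} The operator \(\mathcal L\) is diagonalized by the Fourier basis, since it comes from constant-coefficient spectral differentiation. Its eigenvalues \(\lambda_k\le 0\) by assumption. The multiplier of \(\bigl(I-\tfrac12\Delta t\,\widehat{\mathcal L}\bigr)^{-1}\) is therefore \((1-\tfrac12\Delta t\lambda_k)^{-1}\in(0,1]\). Hence
\[
\bigl\|\mathcal F^{-1}\bigl(I-\tfrac12\Delta t\,\widehat{\mathcal L}\bigr)^{-1}\widehat{g}\bigr\|\le \|g\|
\]
for every grid function \(g\).

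\textbf{Step 2: the contraction estimate.} Take \(X,W\in\mathcal C\). The term involving \(X^n\) cancels in the difference, so
\[
T(X)-T(W)=\mathcal F^{-1}\Bigl[\bigl(I-\tfrac12\Delta t\,\widehat{\mathcal L}\bigr)^{-1}\Delta t\,\bigl(\widehat{\mathcal N(\tfrac{X+X^n}{2})}-\widehat{\mathcal N(\tfrac{W+X^n}{2})}\bigr)\Bigr].
\]
By Step 1 and the Lipschitz bound,
\[
\|T(X)-T(W)\|\le \Delta t\,L_{\mathcal N}\,\bigl\|\tfrac{X+X^n}{2}-\tfrac{W+X^n}{2}\bigr\| = \tfrac12\Delta t\,L_{\mathcal N}\,\|X-W\|.
\]
The hypothesis \(\Delta t\,L_{\mathcal N}<2\) makes the constant \(\tfrac12\Delta t L_{\mathcal N}\) strictly less than \(1\).

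\textbf{Step 3: invariance of \(\mathcal C\) (main obstacle).} This step is needed to justify the Lipschitz bound. The midpoints \(\tfrac{X+X^n}{2}\) and \(\tfrac{W+X^n}{2}\) lie in \(\mathcal C\) by convexity, provided \(X^n\in\mathcal C\). We will assume this, which is natural since \(\mathcal C\) is taken to contain the current iterate. We must also have \(T(\mathcal C)\subseteq\mathcal C\), and the statement does not supply this. The statement implicitly requires it, so we will add it as part of the setting. Concretely, we take \(\mathcal C\) to be a closed ball around \(X^n\) on which \(\mathcal N\) is Lipschitz. One can then try to verify invariance for small \(\Delta t\) from
\[
\|T(X)-X^n\|\le \|T(X)-T(X^n)\|+\|T(X^n)-X^n\|.
\]
The first term is controlled by the contraction of Step 2. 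The second term is \(O(\Delta t)\).

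\textbf{Step 4: conclusion.} Once invariance holds, \(\mathcal C\) is a closed subset of a finite-dimensional, hence complete, space. The Banach fixed point theorem then gives a unique fixed point \(X^\ast\in\mathcal C\). Moreover, the Mann iterates \eqref{eq:relaxed_fp} satisfy
\[
\|X^{(m+1)}-X^\ast\|\le (1-\omega+\omega\tfrac12\Delta tL_{\mathcal N})\|X^{(m)}-X^\ast\|.
\]
Each iterate stays in \(\mathcal C\) because it is a convex combination of points of \(\mathcal C\). The factor is less than \(1\), so the iteration \eqref{eq:relaxed_fp} converges for every \(\omega\in(0,1]\).
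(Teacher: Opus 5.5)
Your argument is the paper's own: the resolvent $(I-\tfrac12\Delta t\,\widehat{\mathcal L})^{-1}$ is nonexpansive by Parseval, the contraction constant is $\tfrac12\Delta t\,L_{\mathcal N}$, Banach's theorem gives the fixed point, and the Mann estimate with factor $1-\omega+\omega q$ is identical. Your Step 3 is not a flaw but a point the paper's proof skips without comment: Banach on $\mathcal C$ requires $T(\mathcal C)\subseteq\mathcal C$, and the Lipschitz bound on the midpoints requires $X^n\in\mathcal C$, neither of which follows from the stated hypotheses (take $\mathcal L=0$, $\mathcal N\equiv c\neq 0$, $\mathcal C=\{X^n\}$), so adding them as assumptions is the right call, though your ball-based verification needs $\|T(X^n)-X^n\|\le(1-\tfrac12\Delta t\,L_{\mathcal N})\,r$, an extra smallness condition on $\Delta t$ that $\Delta t\,L_{\mathcal N}<2$ alone does not give.
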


The proof is provided in Appendix~\ref{pf:mann_midpoint}.

For the anisotropic model with fourfold anisotropy, the coefficient
\(\Gamma(\theta(u))\) depends on the auxiliary variable \(y_6 \ge 0\) through $\Gamma(\theta(u))=1-3\alpha+4\alpha\,y_6(u)$. Thus, $\Gamma(\theta(u))\ge 1-3\alpha>0$
for $0<\alpha<\frac13$. Then \(\mathcal L\) is negative semidefinite under periodic boundary conditions after froze linear operator. Also, \(\mathcal N\) is locally Lipschitz on 
on bounded closed convex set \(\mathcal C\) since the auxiliary-variable coefficients
are smooth functions of \(u\). Therefore, the assumptions of Theorem~\ref{thm:mann_midpoint} hold in the anisotropic case.

\section{Numerical analysis for CH system}
We now apply the above framework to the CH system and examine its analytical properties. For the CH equation, the bulk potential \(F(u)\) usually takes a double-well form, e.g.,
$$
F(u)=\tfrac14(u^2-1)^2.
$$
Other common choices include the Flory Huggins form \cite{flory1953principles, huggins1941solutions}
$$
F(u)=(1+u)\ln(1+u)+(1-u)\ln(1-u)+\theta u^2
$$
and the form of higher-order polynomial \cite{wise2009energy}
$$
F(u)=\tfrac{a}{2}u^2+\tfrac{b}{4}u^4+\tfrac{c}{6}u^6 .
$$
The three commonly used potential functions are shown in Fig.~\ref{fig:potential}, the red dots mark the inflection point.
\begin{figure}[htbp]
	\centering
	\begin{subfigure}[t]{0.33\textwidth}
		\centering
		\includegraphics[width=\linewidth]{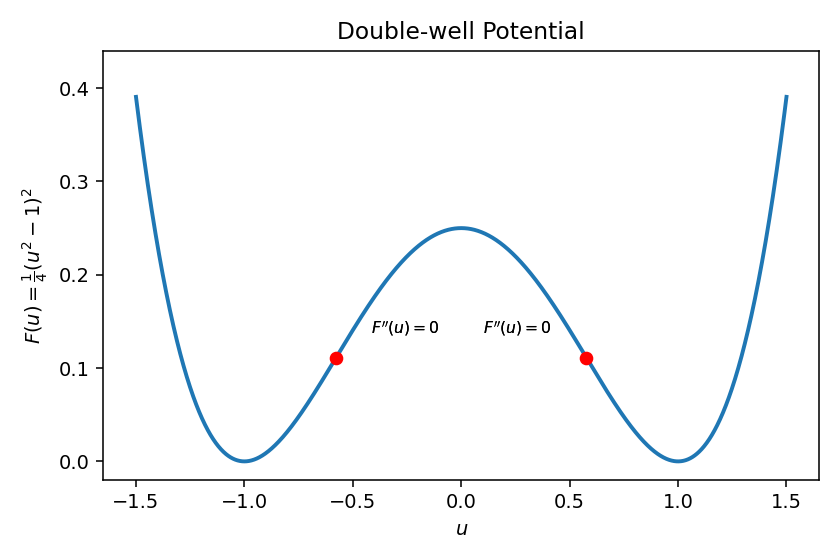}
	\end{subfigure}\hfill
	\begin{subfigure}[t]{0.33\textwidth}
		\centering
		\includegraphics[width=\linewidth]{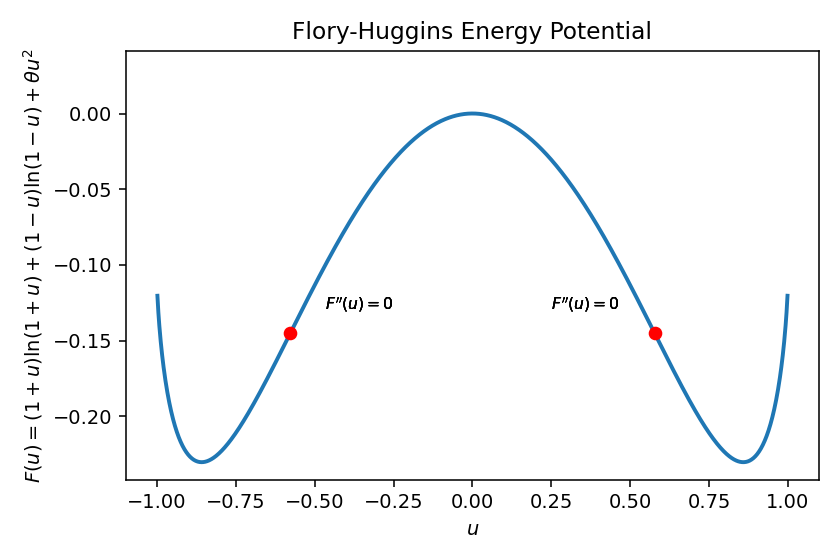}
	\end{subfigure}\hfill
	\begin{subfigure}[t]{0.33\textwidth}
		\centering
		\includegraphics[width=\linewidth]{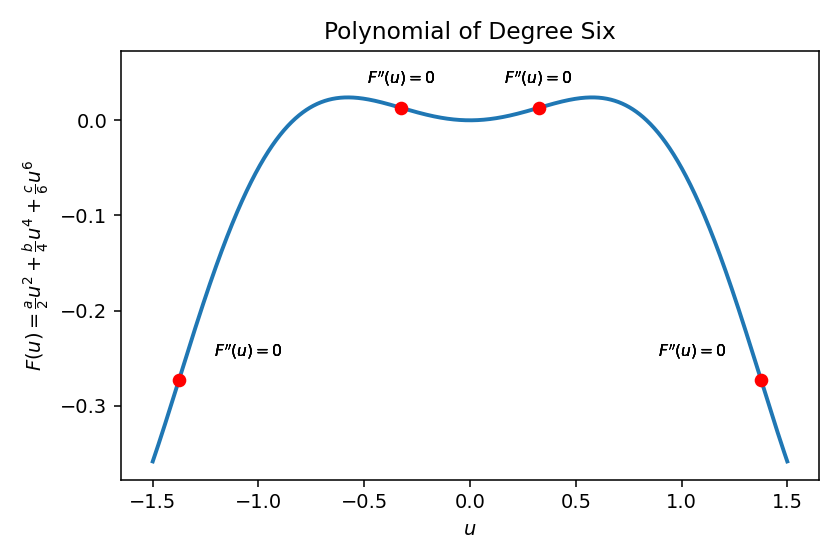}
	\end{subfigure}
	\caption{Three commonly used potential functions, the red $\cdot$ marks the inflection point.}
	\label{fig:potential}
\end{figure}
In what follows, we focus primarily on the double-well potential, while other choices are included to illustrate the broader range of admissible free-energy densities in the CH framework.

\paragraph{Dispersion analysis for the isotropic CH equation}
To derive the dispersion relation, we investigate the evolution of small-amplitude perturbations
around a homogeneous state by setting $u(\mathbf x,t)=u_0+\eta(\mathbf x,t)$ with $|\eta|\ll 1.$
Due to that the mass conservation $u_0=\frac{1}{|\Omega|}\int_\Omega u(\mathbf x,0)\,\mathrm{d}\mathbf x$ is constant. Linearizing CH system~\eqref{eq:CH-iso} around \(u_0\) with constant mobility gives
\begin{equation}\label{eq:CH-linear}
	\eta_t
	=
	M\,\Delta\Bigl(-\Delta\eta+\varepsilon^{-2}F''(u_0)\eta+\beta\Delta^2\eta\Bigr).
\end{equation}

For a Fourier mode $\eta(\mathbf x,t)=\hat\eta_{\mathbf k}(t)e^{i\mathbf k\cdot\mathbf x}$ on a periodic domain, the linearized equation~\eqref{eq:CH-linear} reduces to
\begin{equation}\label{eq:eta}
	\frac{\mathrm d}{\mathrm dt}\hat\eta_{\mathbf k}(t)
	=
	\lambda(\mathbf k;u_0)\,\hat\eta_{\mathbf k}(t),
\end{equation}
where $\lambda(\mathbf k;u_0)=-M\,|\mathbf k|^2\Bigl(|\mathbf k|^2+\varepsilon^{-2}F''(u_0)+\beta|\mathbf k|^4\Bigr)$ denotes the growth rate and  $\mathbf k=(k_x,k_y)$ is the wavevector. If we set $k=|\mathbf k|$ then the dispersion relation can be written as
\[
\lambda(k;u_0)
=
-Mk^2\Bigl(k^2+\varepsilon^{-2}F''(u_0)+\beta k^4\Bigr).
\]

The system~\eqref{eq:eta} is unstable if and only if the growth rate satisfies $\lambda(k;u_0)>0$, i.e., \(k^{2}+\varepsilon^{-2}F''(u_0)+\beta k^{4}<0\). Letting $s=k^{2}\ge0$ implies a  quadratic inequality \(\beta s^{2}+s+\varepsilon^{-2}F''(u_0)<0,\)  which  holds  only when $F''(u_0)<0$.
Moreover, for $\beta>0$ the unstable wavenumbers satisfy
$$
0<s<s_{+},\qquad
s_{+}=\frac{-1+\sqrt{\,1-4\beta\,\varepsilon^{-2}F''(u_0)\,}}{2\beta},
$$
or equivalently,
$$
0<|k|<\sqrt{s_{+}}
=\left(\frac{-1+\sqrt{\,1-4\beta\,\varepsilon^{-2}F''(u_0)\,}}{2\beta}\right)^{1/2}.
$$
In the limit $\beta\to0^{+}$, the above condition reduces to
$0<|k|<\varepsilon^{-1}\sqrt{-F''(u_0)}$.
Therefore, two regimes arise:
\begin{itemize}
	\item[(1)] If $F''(u_0)>0$, the system is stable; all perturbations decay.
	\item[(2)] If $F''(u_0)<0$, the system is spinodally unstable; small perturbations grow exponentially.
\end{itemize}

For the double-well potential, one has \(F''(u_0)=3u_0^2-1\). Hence the spinodal interval is \(|u_0|<1/\sqrt{3}\). In this regime, the linear growth rate \(\lambda=\lambda(\mathbf{k};u_0)\) is positive on a finite band of wave numbers and nonpositive otherwise. For fixed \(u_0\), the dependence of \(\lambda(\mathbf{k};u_0)\) on \(\mathbf{k}\) is illustrated in Fig.~\ref{fig:dispersion-iso}. Here \(u_0=0\) is chosen in the spinodal region, i.e. \(F''(u_0)<0\), so that the growth rate is positive on a finite band of wave numbers.
\begin{figure}[htbp]
	\centering
	\begin{subfigure}[t]{0.5\textwidth}
		\centering
		\includegraphics[width=\linewidth]{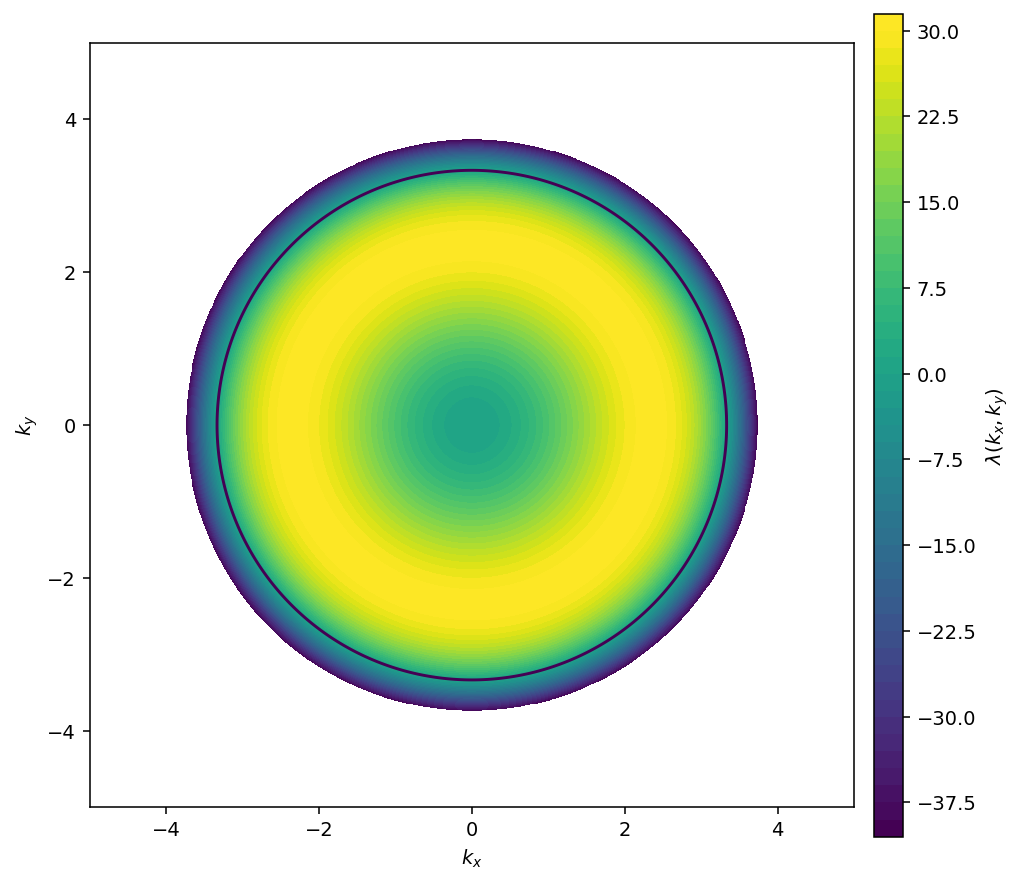}
		\caption{Contour of $\lambda(\mathbf{k})$}
	\end{subfigure}\hfill
	\begin{subfigure}[t]{0.45\textwidth}
		\includegraphics[width=\linewidth]{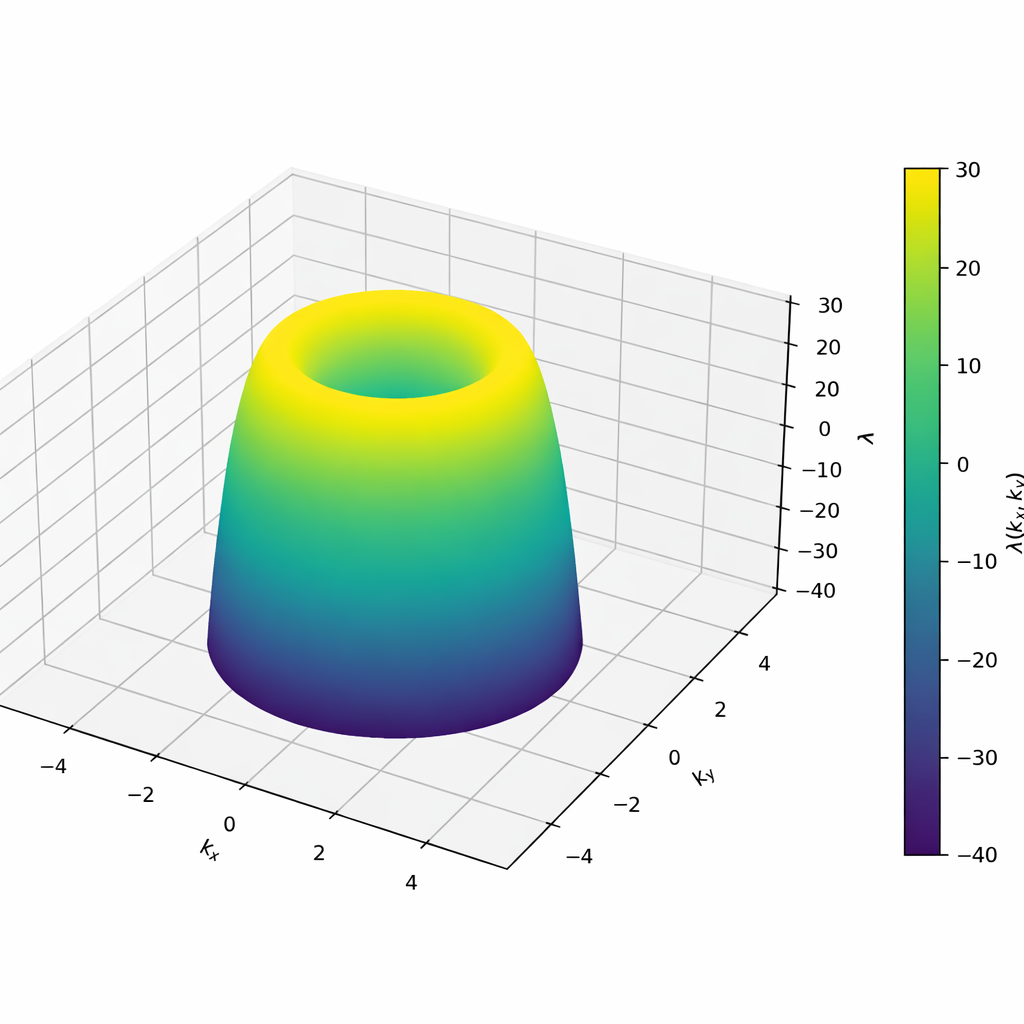}
		\caption{3D surface of $\lambda(\mathbf{k})$}
	\end{subfigure}
	\caption{Dispersion relation of the linearized isotropic Cahn--Hilliard equation for \(u_0=0\).}
	\label{fig:dispersion-iso}
\end{figure}

Consider the following system with periodic boundary conditions
\begin{equation}\label{eq:pde}
	u_t = a\,\Delta u + b\,\Delta^2 u + c\,\Delta^3 u
	\qquad \text{in } \Omega \times (0,\infty),
\end{equation}
where \(a,b,c\in\mathbb{R}\) and \(\Omega=[0,L]^2\). Note that the linearized isotropic CH equation~\eqref{eq:CH-linear} corresponds to \eqref{eq:pde} when
$a=-M\epsilon^{-2}F''(u_0), b=-M, c=M\beta.$ Then we have the following proposition.
\begin{proposition}\label{proposition:disp_cont_disc_refined}
	Consider Eq.~\eqref{eq:pde}. It	admits normal-mode solutions of the form $u(\bm x,t)=U_0\,e^{\lambda t}e^{\mathrm i \mathbf k\cdot \bm x}$, $\mathbf k\in\frac{2\pi}{L}\mathbb Z^2$, \(U_0\in\mathbb C\) where corresponding growth rate \(\lambda\) is given by
	\begin{equation}\label{eq:dis}
		\lambda(\mathbf k)=-a\,k^2+b\,k^4-c\,k^6.
	\end{equation}

\end{proposition}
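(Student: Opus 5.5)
The plan is to substitute the normal-mode ansatz $u(\bm x,t)=U_0\,e^{\lambda t}e^{\mathrm i\mathbf k\cdot\bm x}$ directly into Eq.~\eqref{eq:pde}. Then $\lambda$ is read off by matching coefficients. Periodicity on $\Omega=[0,L]^2$ is guaranteed by the choice $\mathbf k\in\frac{2\pi}{L}\mathbb Z^2$, since then $e^{\mathrm i\mathbf k\cdot\bm x}$ is $L$-periodic in each coordinate. Hence the ansatz is admissible in the function class on which \eqref{eq:pde} is posed.

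The key computation is the action of the Laplacian on a plane wave. Since $\partial_{x_j}e^{\mathrm i\mathbf k\cdot\bm x}=\mathrm i k_j e^{\mathrm i\mathbf k\cdot\bm x}$, we get $\Delta e^{\mathrm i\mathbf k\cdot\bm x}=-(k_x^2+k_y^2)e^{\mathrm i\mathbf k\cdot\bm x}=-k^2e^{\mathrm i\mathbf k\cdot\bm x}$ with $k=|\mathbf k|$. By induction, $\Delta^m e^{\mathrm i\mathbf k\cdot\bm x}=(-k^2)^m e^{\mathrm i\mathbf k\cdot\bm x}$. Consequently
\[
a\Delta u+b\Delta^2u+c\Delta^3u=\bigl(-a k^2+b k^4-c k^6\bigr)u,
\]
while $u_t=\lambda u$. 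Equating the two sides and dividing by the nonvanishing factor $U_0e^{\lambda t}e^{\mathrm i\mathbf k\cdot\bm x}$ (taking $U_0\neq0$; the case $U_0=0$ is trivial) yields \eqref{eq:dis}. Conversely, with $\lambda$ given by \eqref{eq:dis}, the ansatz satisfies \eqref{eq:pde} identically, so such normal modes exist for every admissible $\mathbf k$.

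Nothing here is a real obstacle; the only point needing care is the sign bookkeeping, i.e.\ that $\Delta^m$ contributes $(-1)^m k^{2m}$. As a final step I would record the consistency check with the earlier dispersion relation. Substituting $a=-M\varepsilon^{-2}F''(u_0)$, $b=-M$, $c=M\beta$ gives
\[
\lambda=M\varepsilon^{-2}F''(u_0)k^2-Mk^4-M\beta k^6 .
\]
This should be compared with $\lambda(k;u_0)=-Mk^2\bigl(k^2+\varepsilon^{-2}F''(u_0)+\beta k^4\bigr)$ derived above. The two agree in the $k^4$ and $k^6$ terms, but the $k^2$ term has the opposite sign, so the stated identification of $a$ should read $a=+M\varepsilon^{-2}F''(u_0)$. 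I would note this sign convention explicitly when comparing with \eqref{eq:eta}.
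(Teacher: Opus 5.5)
Your proof is correct and follows the paper's: substitute the normal-mode ansatz, use $\Delta^m e^{\mathrm i\mathbf k\cdot\bm x}=(-k^2)^m e^{\mathrm i\mathbf k\cdot\bm x}$, and match with $u_t=\lambda u$ to obtain $\lambda(\mathbf k)=-ak^2+bk^4-ck^6$; the periodicity and converse remarks are harmless additions. Your closing sign check is also right: linearizing gives $\eta_t=M\varepsilon^{-2}F''(u_0)\Delta\eta-M\Delta^2\eta+M\beta\Delta^3\eta$, so the paper's identification should read $a=+M\varepsilon^{-2}F''(u_0)$, and only with that sign does the formula reproduce $\lambda(k;u_0)=-Mk^2\bigl(k^2+\varepsilon^{-2}F''(u_0)+\beta k^4\bigr)$.
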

\begin{proof}
	Substitute $u(\bm x,t)=U_0\,e^{\lambda t}\,e^{\mathrm i \mathbf k\cdot \bm x}$ into  \eqref{eq:pde}, the time derivative yields
	\[
	u_t=\lambda u.
	\] For the spatial operators, the eigenfunction property of the Fourier mode gives $\Delta u = -k^2 u$, and consequently $\Delta^2 u = k^4 u$ and $\Delta^3 u = -k^6 u$. Inserting these expressions into the PDE leads to the characteristic equation
	\[
	\lambda(\mathbf k)=-a\,k^2+b\,k^4-c\,k^6.
	\]
	This completes the proof.
	\end{proof}

In this paper, we discretize the spatial variables using a Fourier spectral method. For system~\eqref{eq:pde}, this yields an ODE for each Fourier mode $\hat{\eta}(\mathbf{k},t)$:
\begin{equation}
	\frac{d}{dt}\hat{\eta}(\mathbf{k},t) = \lambda(\mathbf{k})\hat{\eta}(\mathbf{k},t),
\end{equation}
where the growth rate $\lambda(\mathbf{k})$ is given by~\eqref{eq:dis}.
\begin{proposition}\label{proposition:DispDisc}
	Let $\Delta t$ denote the time step. Applying  Runge--Kutta method to system~\eqref{eq:eta} yields
	\begin{align}\label{eq:DisDis}
		e^{\Delta t \tilde{\lambda}(\mathbf{k})} = R\bigl(\Delta t \lambda(\mathbf{k})\bigr),
	\end{align}
	where $\tilde{\lambda}(\mathbf{k})$ is the discrete growth rate,  $R(z)$ is the stability function.
\end{proposition}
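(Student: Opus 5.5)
The plan is to use the standard fact that a Runge--Kutta method, applied to a linear scalar test equation, reduces to multiplication by its stability function, and then to \emph{define} the discrete growth rate by matching one step of the scheme with the exact exponential propagator over $\Delta t$. First I will fix a wavevector $\mathbf k$ and write the modal equation \eqref{eq:eta} as $\dot{\hat\eta}=\lambda\hat\eta$ with $\lambda=\lambda(\mathbf k)$ a constant (real, by \eqref{eq:dis}). For an $s$-stage Runge--Kutta method with coefficients $(A,b)$, the stages satisfy $K=\lambda(\hat\eta_n\mathbf 1+\Delta t\,AK)$. Setting $z=\Delta t\lambda$ and assuming $I-zA$ is invertible, I will solve this linear system to get $\Delta t K=z(I-zA)^{-1}\mathbf 1\,\hat\eta_n$.

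Substituting into the update $\hat\eta_{n+1}=\hat\eta_n+\Delta t\,b^\top K$ then gives $\hat\eta_{n+1}=R(z)\hat\eta_n$ with the stability function
\[
R(z)=1+z\,b^\top(I-zA)^{-1}\mathbf 1 .
\]
For the implicit midpoint rule used in the QCE scheme this becomes $R(z)=(1+z/2)/(1-z/2)$. Because the Fourier pseudo-spectral discretization diagonalizes the linear operator, the same relation holds independently for every mode.

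Next I will define the discrete growth rate by requiring that the numerical mode behave like $\hat\eta_n=\hat\eta_0e^{\tilde\lambda t_n}$. This requirement means $e^{\Delta t\tilde\lambda}\hat\eta_n=\hat\eta_{n+1}=R(\Delta t\lambda)\hat\eta_n$, which is exactly \eqref{eq:DisDis}. Equivalently, $\tilde\lambda=\Delta t^{-1}\log R(\Delta t\lambda)$.

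The step that needs care is whether $\tilde\lambda$ is well defined. This requires $1-z/2\neq0$, i.e.\ $z\neq 2$, together with $R(z)\neq0$ for the logarithm to exist. I will handle this by taking a complex logarithm, which gives the principal branch, or by stating the relation only for $R(z)>0$. For midpoint, $R(z)>0$ holds when $|z|<2$. When $R(z)<0$ the mode oscillates in sign, and $\tilde\lambda$ picks up an imaginary part $\mathrm{i}\pi/\Delta t$.

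As a consistency check, I will expand $R(z)=e^{z}+O(z^{p+1})$ for a method of order $p$. This gives $\tilde\lambda=\lambda+O(\Delta t^{p}\lambda^{p+1})$, so the discrete dispersion relation matches the continuous one for resolved modes.
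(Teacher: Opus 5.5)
Your proposal is correct and follows the same route as the paper. One Runge--Kutta step on the linear mode equation is multiplication by the stability function $R(\Delta t\lambda(\mathbf k))$, and defining the discrete growth rate through the ansatz $\hat\eta_n=\hat\eta_0e^{\tilde\lambda t_n}$ then gives \eqref{eq:DisDis}. Your explicit formula $R(z)=1+z\,b^\top(I-zA)^{-1}\mathbf 1$ and your remarks on when $\log R$ is real ($|z|<2$ for the midpoint rule) add detail the paper omits. Your ansatz with $t_n=n\Delta t$ is also the correct form of the paper's $U_0\exp(\Delta t\tilde\lambda)$, which is missing the factor $n$.
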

\begin{proof}
	Applying the Runge-Kutta method to system~\eqref{eq:eta} leads to
	\begin{align}\label{eq:DisSemiODE}
		{\hat u}^{n+1}=R(\Delta t\lambda(\mathbf{k})){\hat u}^{n}.
	\end{align}
	Upon introducing the discrete growth rate $\tilde{\lambda}$ via ${\hat u}^{n}=U_0\exp(\Delta t \tilde{\lambda})$, Eq.~\eqref{eq:DisSemiODE} reduces to Eq.~\eqref{eq:DisDis}. This completes the proof of this proposition.
\end{proof}

It follows from Proposition~\ref{proposition:DispDisc}, the discrete amplification factors for explicit Euler ($\rm EE$), implicit Euler ($\rm IE$), and implicit midpoint methods ($\rm IM$) can be calculated by
\begin{equation}\label{eq:g_three_methods}
	\begin{aligned}
		g_{\rm EE}(\mathbf{k})
		&=1-\Delta t\,M|\mathbf{k}|^2\Bigl(A_0+|\mathbf{k}|^2+\beta|\mathbf{k}|^4\Bigr),\\[0.5ex]
		g_{\rm IE}(\mathbf{k})
		&=\frac{1}{1+\Delta t\,M|\mathbf{k}|^2\Bigl(A_0+|\mathbf{k}|^2+\beta|\mathbf{k}|^4\Bigr)},\\[0.5ex]
		g_{\rm IM}(\mathbf{k})
		&=\frac{1-\frac{\Delta t}{2}M|\mathbf{k}|^2\Bigl(A_0+|\mathbf{k}|^2+\beta|\mathbf{k}|^4\Bigr)}
		{1+\frac{\Delta t}{2}M|\mathbf{k}|^2\Bigl(A_0+|\mathbf{k}|^2+\beta|\mathbf{k}|^4\Bigr)}.
	\end{aligned}
\end{equation}
The exact amplification factor is $g_{\rm ex}(\mathbf{k}) = \exp(\Delta t \lambda(\mathbf{k}))$. The discrete amplification maps produced by the three schemes, together with the exact amplification map, are shown below, where the solid curve indicates the contour level $g(\mathbf{k}) = 1$.

\begin{figure}[htbp]
	\centering
	
	\resizebox{0.85\textwidth}{!}{%
		\begin{minipage}{\textwidth}
			\centering
			
			\begin{subfigure}{0.49\linewidth}
				\centering
				\includegraphics[width=\linewidth]{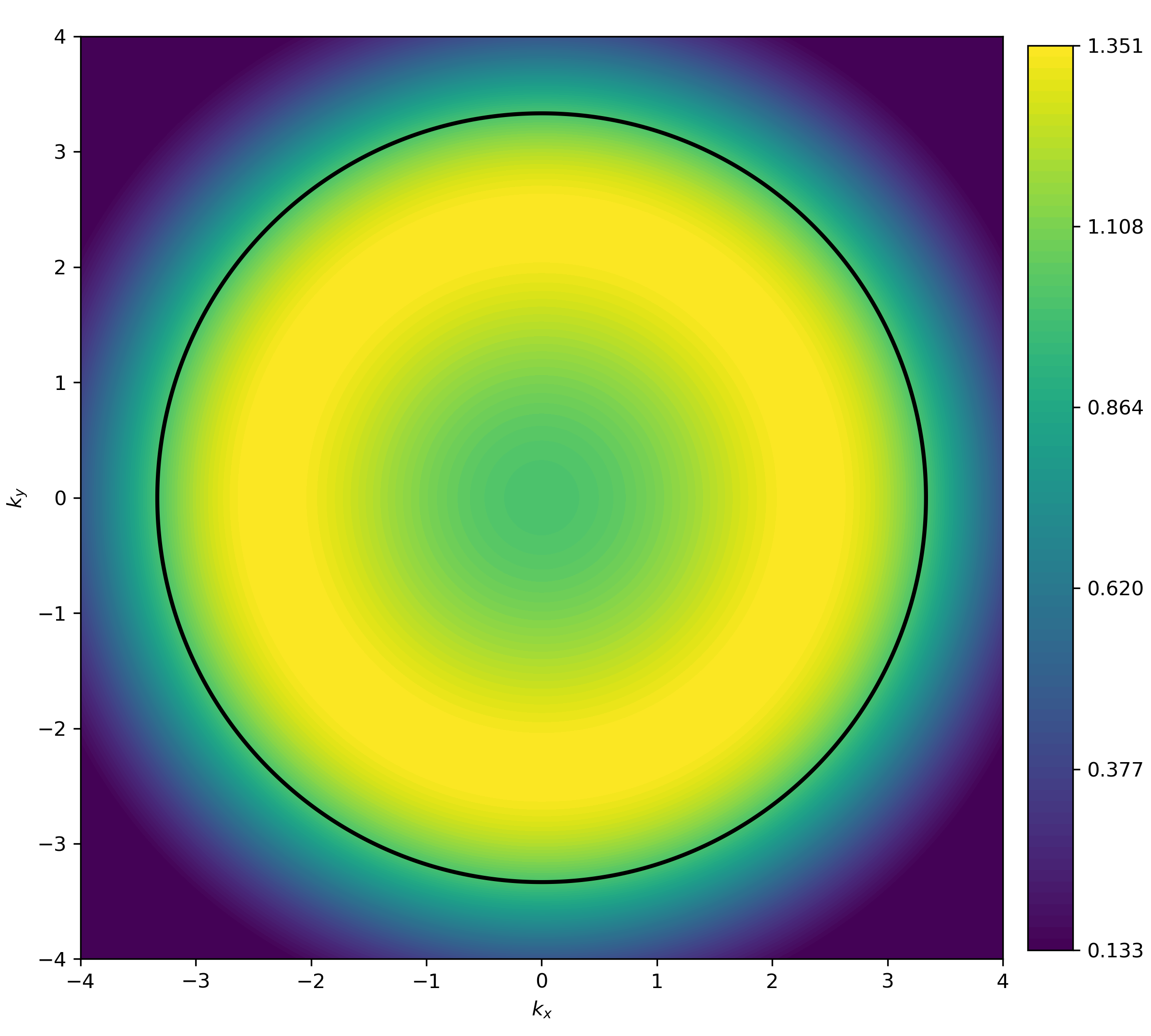}
			\end{subfigure}\hfill
			\begin{subfigure}{0.49\linewidth}
				\centering
				\includegraphics[width=\linewidth]{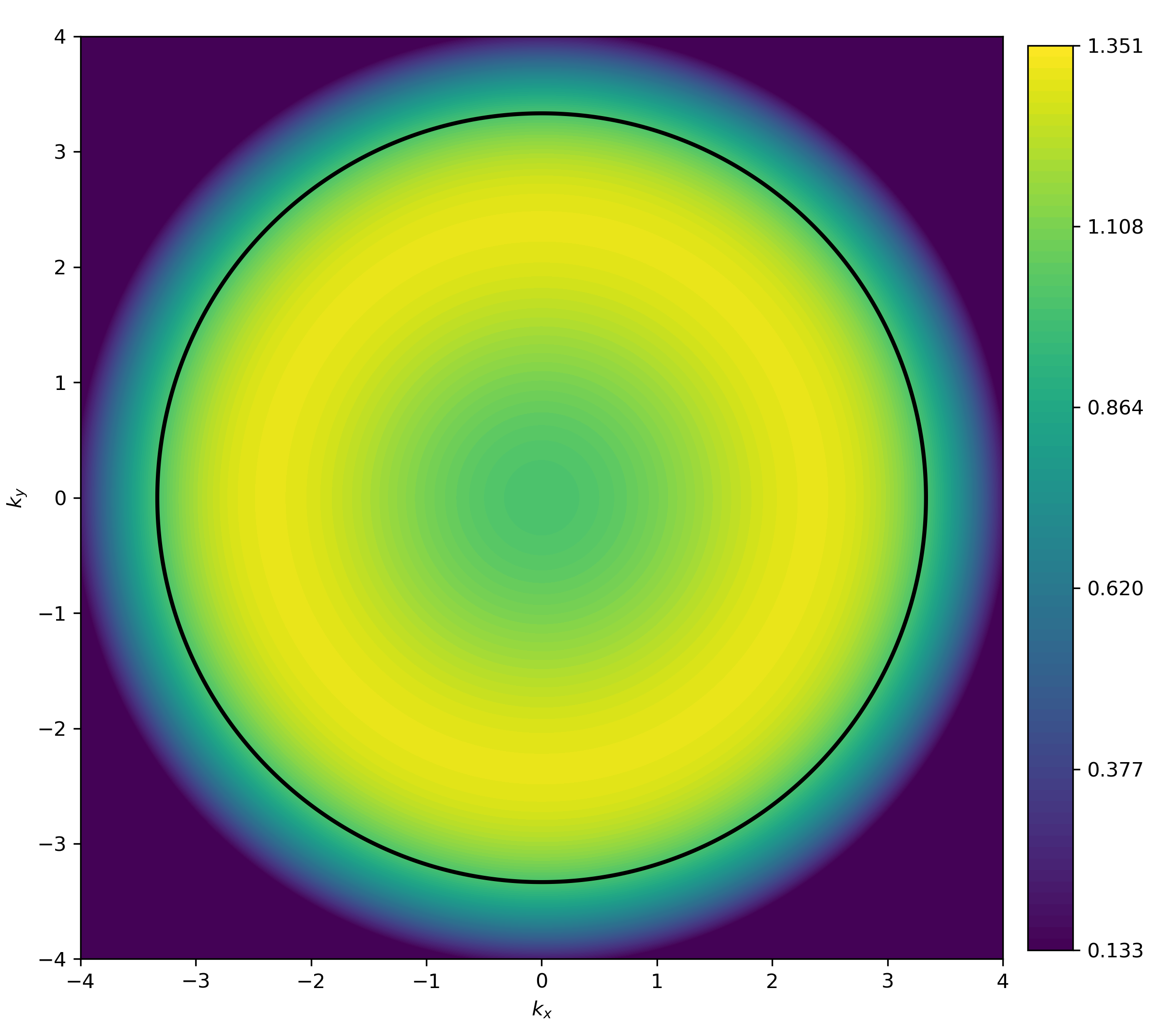}
			\end{subfigure}
			
			\vspace{0.6em}
			
			\begin{subfigure}{0.49\linewidth}
				\centering
				\includegraphics[width=\linewidth]{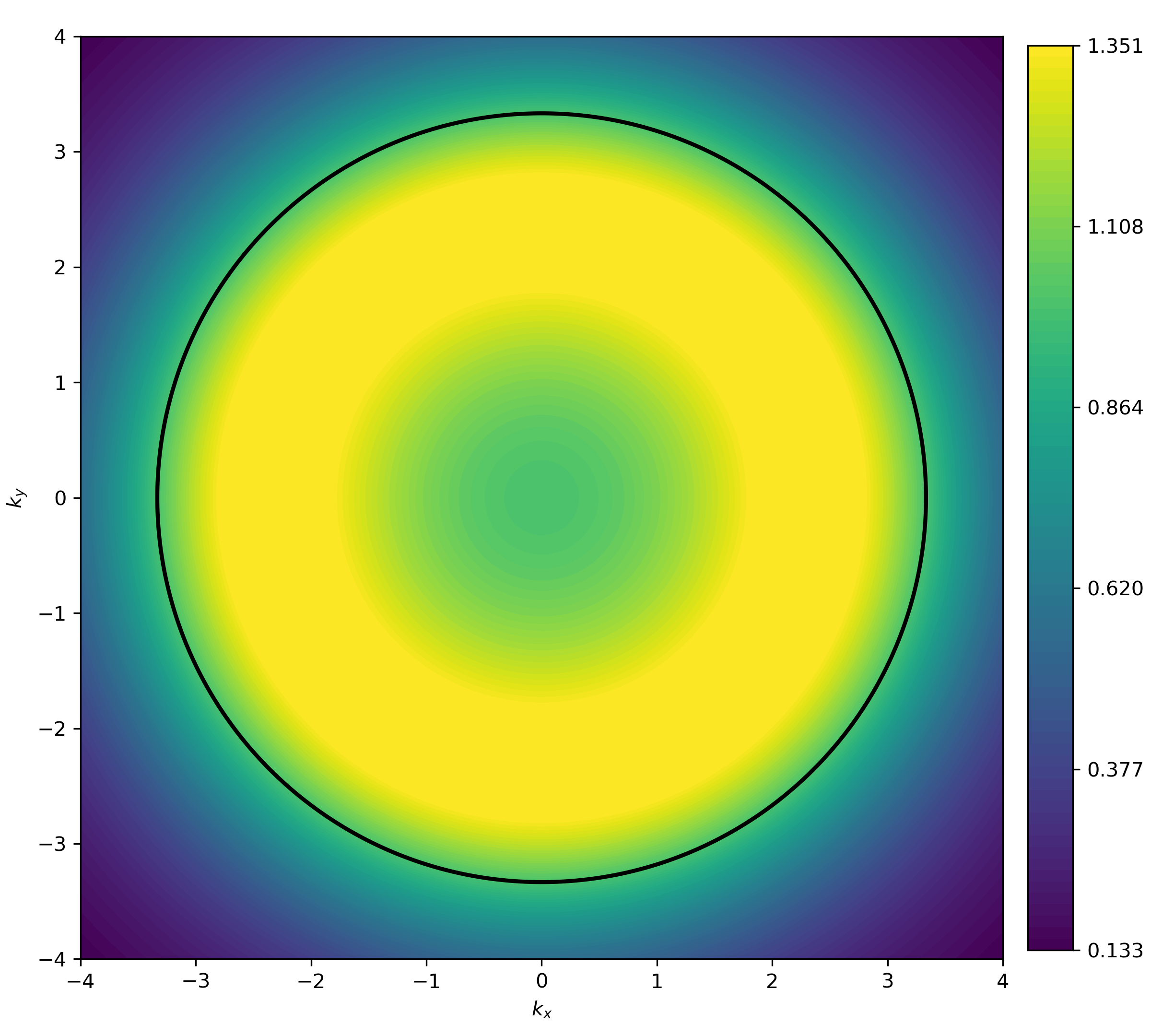}
			\end{subfigure}\hfill
			\begin{subfigure}{0.49\linewidth}
				\centering
				\includegraphics[width=\linewidth]{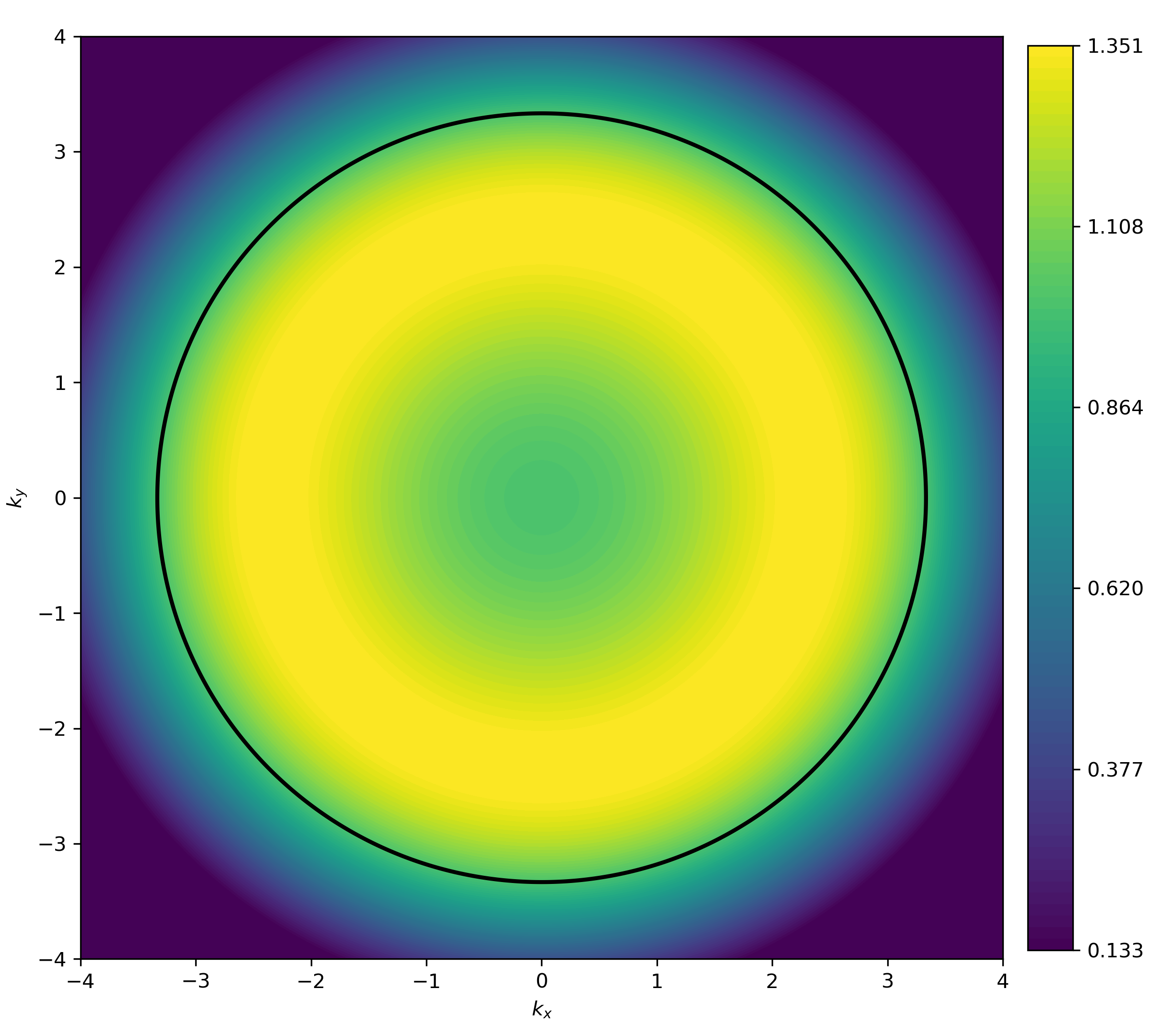}
			\end{subfigure}
			
		\end{minipage}%
	}
	
	\caption{Contours of $g(\mathbf{k})$ in the $(k_x, k_y)$-plane for the isotropic case. Top left: exact; top right: explicit Euler; bottom left: implicit Euler; bottom right: implicit midpoint.}
	\label{fig:comparison-iso}
\end{figure}

As shown in Fig.~\ref{fig:comparison-iso}, the explicit Euler scheme may produce  $|g_{\rm EE}(\mathbf{k})|>1$ for large wavenumbers $|\mathbf{k}|$, causing high-frequency modes to become unstable. In contrast, the implicit Euler scheme is stable for any $\Delta t$, but it excessively dampens the solution. The implicit midpoint scheme is a much closer match to the exact map, providing a second-order accurate and A-stable.  As shown in the plot, it better preserves the shape of the level sets in the $(k_x,k_y)$ plane.

\paragraph{Spinodal instability}
From the dispersion relation derived above, we obtain the growth rate of each Fourier mode. This relation allows us to analyze the dependence of $\lambda(\mathbf{k})$ on the homogeneous background state $u_0$, and hence to distinguish the spinodal regime from the stable regime. 
Spinodal instability occurs when a band of wavenumbers yields $\lambda(\mathbf{k})>0$. This causes the growth of small perturbations.

We consider three homogeneous background states, namely $u_0 = 0,\ 0.70,\ 0.95$, and set the initial data as $\bar u = u_0 + \delta \xi$ with $\xi \sim \mathcal{N}(0,1)$ at each grid point.

Fig.~\ref{fig:spinodal} illustrates the spinodal instability criterion for the isotropic CH model. The homogeneous state is linearly unstable if and only if $F''(u_0) < 0$. In our tests, $u_0 = 0$ yields $F''(u_0) = -1 < 0$; consequently, random perturbations are amplified, and phase separation patterns emerge. In contrast, for $u_0 = 0.70$ and $u_0 = 0.95$, we have $F''(u_0) > 0$, so $\lambda(k) \le 0$ for all $k$. Hence, the perturbations decay and the solution remains nearly homogeneous.

\begin{figure*}
\centering
\includegraphics[width=\linewidth]{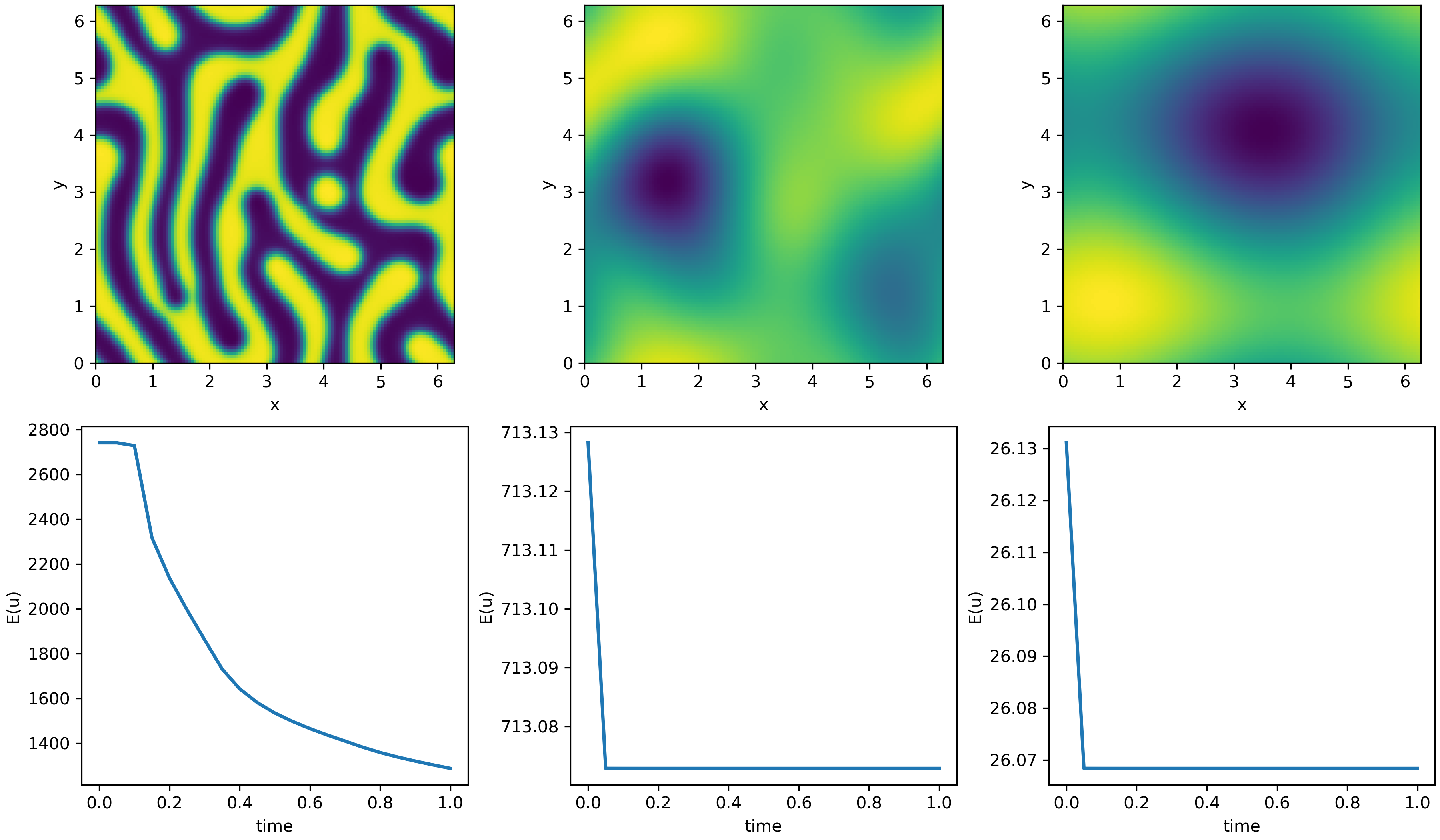}
\caption{Phase separation and energy decay with different initial conditions.}
\label{fig:spinodal}
\end{figure*}

\paragraph{Coarsening law for Isotropic CH model} 
In multi-phase dynamical systems, the coarsening law describes a power-law relationship. Below, we present the coarsening behavior for the isotropic CH model~\eqref{eq:CH-iso}.

\begin{theorem}\label{theorem:coarsening_iso_CH}
	Consider the isotropic CH equation~\eqref{eq:CH-iso} on a periodic domain $\Omega$ with constant mobility $M>0$. Let $\bar E(t)=\frac{E(t)}{|\Omega|}$ denote the energy density.  In the late-stage coarsening regime, the dynamics are governed by a single length scale $R(t)$, and the energy is dominated by interfacial contributions, yielding the scaling law
	\[
	\bar E(t)\sim \frac{\sigma}{R(t)},
	\]
	where \(\sigma>0\) denotes the surface tension. Then
	\[
	R(t)\sim (M\sigma t)^{1/3},
	\qquad
	\bar E(t)\sim M^{-1/3}\sigma^{2/3}t^{-1/3}.
	\]
\end{theorem}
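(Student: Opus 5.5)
The argument is a scaling (dimensional-balance) derivation rather than a rigorous estimate. Its inputs are the continuous energy dissipation law~\eqref{eq:EnDis} and the single-length-scale hypothesis $\bar E(t)\sim\sigma/R(t)$ stated in the theorem.

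First, differentiate the scaling ansatz in time, which gives $\frac{\mathrm d}{\mathrm dt}\bar E\sim -\sigma\dot R/R^2$. Compare this with~\eqref{eq:EnDis} divided by $|\Omega|$, namely $\frac{\mathrm d}{\mathrm dt}\bar E=-\frac{M}{|\Omega|}\int_\Omega|\nabla\mu|^2\,\mathrm dx$. The task then reduces to estimating the right-hand side in terms of $R$ and $\sigma$.

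Second, estimate the chemical potential in the late coarsening regime. The bulk phases are essentially at the wells $u=\pm1$, so $\mu$ is produced only by interface curvature. By the Gibbs--Thomson relation, $\mu\sim\sigma\kappa\sim\sigma/R$, up to an $O(1)$ factor from the jump $[u]=2$. Since the quasi-static diffusion field varies on the same scale $R$, we get $|\nabla\mu|\sim\sigma/R^2$. This is the step I expect to be the main obstacle. Justifying Gibbs--Thomson and the assertion that $\mu$ varies on scale $R$ (not on $\varepsilon$) requires the sharp-interface (Mullins--Sekerka) limit. I will take it as a modelling assumption consistent with the single-scale hypothesis, perhaps remarking that it follows from matched asymptotics.

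Third, decide which region carries the dissipation. The integrand $|\nabla\mu|^2\sim\sigma^2/R^4$ is spread over the bulk, which has volume fraction $O(1)$. Hence $\frac{1}{|\Omega|}\int_\Omega|\nabla\mu|^2\,\mathrm dx\sim\sigma^2/R^4$. Equating this with the time derivative of the ansatz gives
\[
\frac{\sigma\dot R}{R^2}\sim \frac{M\sigma^2}{R^4}
\quad\Longrightarrow\quad
R^2\dot R\sim M\sigma .
\]
Integrating for large $t$, with the initial length scale negligible, yields $R^3\sim 3M\sigma t$, so $R(t)\sim(M\sigma t)^{1/3}$.

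Finally, substitute into $\bar E\sim\sigma/R$ to get $\bar E(t)\sim\sigma(M\sigma t)^{-1/3}=M^{-1/3}\sigma^{2/3}t^{-1/3}$. This completes the Lifshitz--Slyozov-type $t^{1/3}$ law.
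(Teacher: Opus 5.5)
Your proposal is correct and follows the paper's argument essentially step for step. You differentiate $\bar E\sim\sigma/R$, use Gibbs--Thomson with the single-length-scale hypothesis to get $\mu\sim\sigma/R$ and $|\nabla\mu|\sim\sigma/R^2$, balance against the dissipation law to get $R^2\dot R\sim M\sigma$, and integrate. Your extra remarks (the bulk carries an $O(1)$ volume fraction of the dissipation, and Gibbs--Thomson is a sharp-interface modelling assumption) only state explicitly what the paper's heuristic uses implicitly.
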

\begin{proof}
	Differentiating the scaling relation $\bar E(t)\sim \sigma/R(t)$ leads to
	\begin{align}\label{eq:1}
		\frac{\mathrm d \bar E}{\mathrm d t}\sim -\frac{\sigma}{R(t)^2}\,\frac{\mathrm d R}{\mathrm d t}.
	\end{align}
	The Gibbs-Thomson curvature relation gives \(\mu\sim \sigma\kappa\), and under the single-length-scale hypothesis \(\kappa\sim 1/R(t)\), hence \(\mu\sim \sigma/R(t)\). Because $\mu$ varies over a length scale of order $R(t)$ in the late-stage regime, it follows that $|\nabla\mu| \sim \frac{\mu}{R(t)} \sim \frac{\sigma}{R(t)^2}$.
	Thus, the energy dissipation per unit volume satisfies
	\begin{align}\label{eq:2}
		-\frac{\mathrm d \bar E}{\mathrm d t}
		=\frac{M}{|\Omega|}\int_\Omega |\nabla\mu|^2\,\mathrm d x
		\sim M\Bigl(\frac{\sigma}{R(t)^2}\Bigr)^2.
	\end{align}
	Combining Eqs.~\eqref{eq:1} and~\eqref{eq:2} yields 
	$$
	\frac{\mathrm d R}{\mathrm d t}\sim \frac{M\sigma}{R(t)^2}.
	$$
	Integrating this scaling ODE gives $R(t)\sim (M\sigma\,t)^{1/3}$, and substituting back into $\bar E(t)\sim \sigma/R(t)$ obtains  $\bar E(t)\sim M^{-1/3}\sigma^{2/3}t^{-1/3}.$ This completes the proof of this theorem.
\end{proof}

We take a random initial condition on \(\Omega=[0,2\pi]^2\), with values uniformly distributed in \([-1,1]\). We simulate system~\eqref{eq:CH-iso} with $\varepsilon=0.3$ and $M=1$ using the new numerical method developed in this paper. Figure~\ref{fig:coarsening-rate} shows that the energy density \(\bar E(t)\) exhibits the predicted \(t^{-1/3}\) decay, and the fitted line has slope \(-1/3\) in the log--log plot.

\begin{figure}[htbp]
\centering
\includegraphics[width=0.7\linewidth]{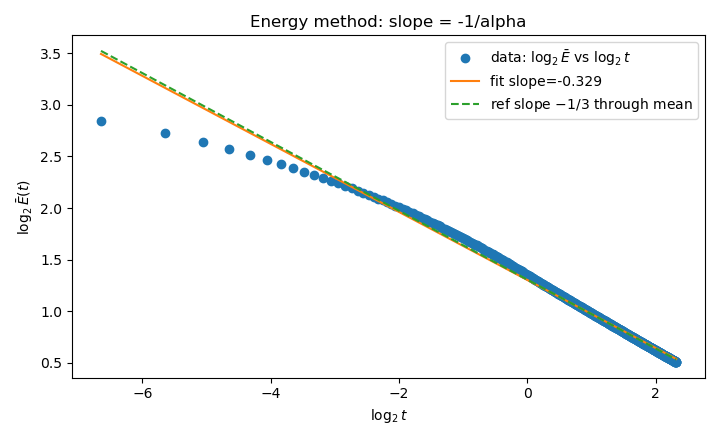}
\caption{Coarsening rate of the isotropic CH equation with $M=1$.}
\label{fig:coarsening-rate}
\end{figure}

\paragraph{Dispersion analysis for the  anisotropic CH equation}
In many crystalline applications, isotropic surface energy proves inadequate for describing faceted morphologies and directionally preferred growth. This motivates the investigation of the anisotropic CH model~\eqref{eq:CH-ani} and its associated dispersion behavior.

Similar to the isotropic case, we linearize the anisotropic model about a homogeneous state. Each Fourier mode $\hat{\eta}(\mathbf{k},t)$ satisfies the scalar ODE~\eqref{eq:eta}, where  
$
\lambda(\mathbf k)
= -M k^2 \Gamma(\theta_{\mathbf{k}})
\bigl(k^2 + \varepsilon^{-2}F''(u_0) + \beta k^4\bigr),
\label{eq:disp-ani}
$ with $
\Gamma(\theta_{\mathbf{k}})
=1+\alpha\,\frac{k_x^4-6k_x^2k_y^2+k_y^4}{(k_x^2+k_y^2)^2},
\mathbf k\neq \mathbf 0.
$

Since the anisotropy factor $\Gamma(\theta_{\mathbf{k}})$ is nonnegative, the zeros of $\lambda(\mathbf{k})$ are determined by $k^2 + \varepsilon^{-2}F''(u_0) + \beta k^4 = 0$. In particular, for $\beta = 0$ equation~\eqref{eq:disp-ani} reduces to
$$
\lambda(\mathbf k)=-k^2\,\Gamma(\theta_{\mathbf k})\bigl(k^2+\varepsilon^{-2}F''(u_0)\bigr),
$$
which gives  the spinodal criterion while  $F''(u_0)<0$.

If $\beta>0$, set $s=k^2\ge 0$, then
$$
\lambda((\mathbf k))=-s\Gamma(\theta_{\mathbf k})\bigl(\varepsilon^{-2}F''(u_0)+s+\beta s^2\bigr).
$$
When $F''(u_0)>0$, one has $\lambda<0$ for all $k$, hence in this case the system is linear stable. If $F''(u_0)<0$, there exists a positive root $s_+>0$ given by
$$
s_{+}=
\frac{-1+\sqrt{1-4\beta\,\varepsilon^{-2}F''(u_0)}}{2\beta}.
$$

Thus, the system is unstable while $0<k<\sqrt{s_+}$, which is the similar to the isotropic case. In Fig.~\ref{fig:dispersion-ani}, we demonstrate the contour plot of $\lambda(\mathbf k)$ for a given orientation (here without loss of generality take $\theta=0$), and $u_0=0$. Here \(u_0\) is chosen in the spinodal region, so that the unstable band of wave numbers is most clearly visible. The orientation is fixed at \(\theta=0\) in order to provide a representative directional slice of the anisotropic dispersion relation.
\begin{figure}[htbp]
\centering
\begin{subfigure}[t]{0.5\textwidth}
	\centering
	\includegraphics[width=\linewidth]{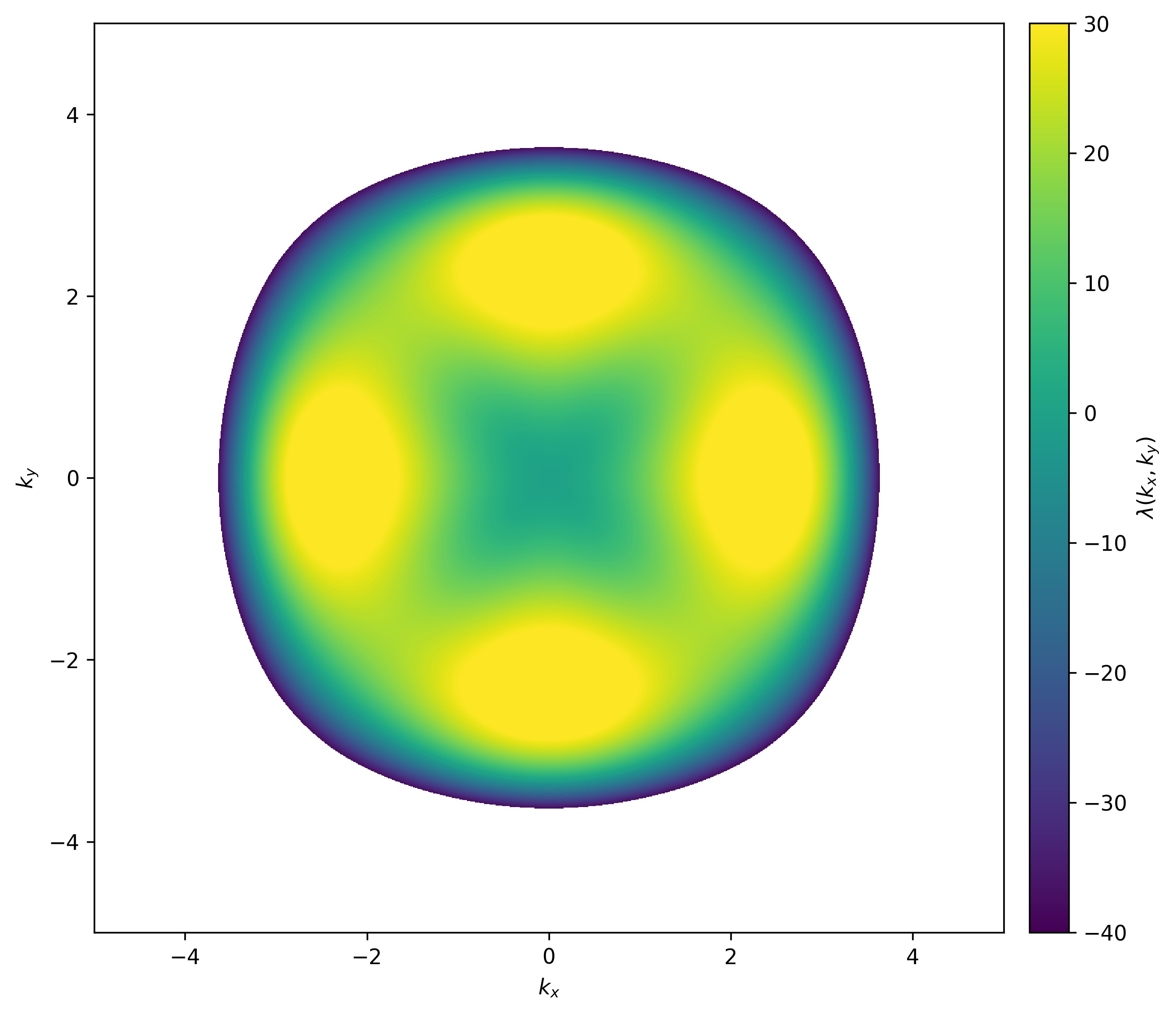}
\end{subfigure}\hfill
\begin{subfigure}[t]{0.45\textwidth}
	\includegraphics[width=\linewidth]{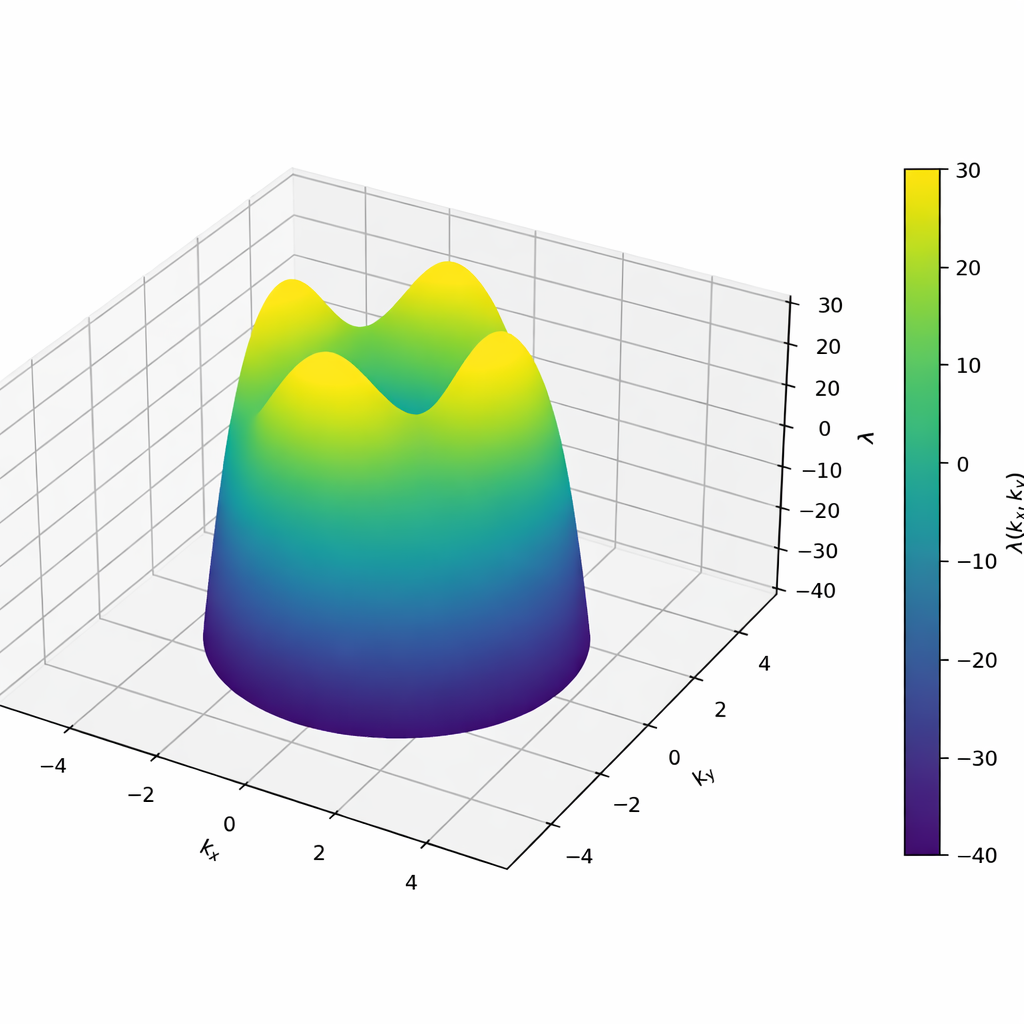}
\end{subfigure}
\caption{Dispersion relation of the linearized anisotropic CH equation with $u_0=0$ and $\theta=0$. Left: Contour plot of $\lambda(\mathbf{k})$;
	Right: 3D surface of $\lambda(\mathbf{k})$.}
\label{fig:dispersion-ani}
\end{figure}

Similar to the isotropic case, the equation~\eqref{eq:eta} yields the exact amplification factor $g_{\rm ex}(\mathbf{k})=\exp\!\bigl(\Delta t\,\lambda(\mathbf{k})\bigr)$.
Applying explicit Euler (EE), implicit Euler (IE), and implicit midpoint (IM) to this mode ODE yields the discrete amplification factors

\begin{equation}\label{eq:g_aniso_three_methods} 
	\begin{aligned} 
		g_{\rm EE}(\mathbf k) &=1-\Delta t\,M k^2 \Gamma(\theta_{\mathbf{k}}) \bigl(k^2+\varepsilon^{-2}F''(u_0)+\beta k^4\bigr),\\[0.5ex] g_{\rm IE}(\mathbf k) &=\frac{1}{1+\Delta t\,M k^2 \Gamma(\theta_{\mathbf{k}}) \bigl(k^2+\varepsilon^{-2}F''(u_0)+\beta k^4\bigr)},\\[0.5ex] g_{\rm IM}(\mathbf k) &=\frac{1-\frac{\Delta t}{2}M k^2 \Gamma(\theta_{\mathbf{k}}) \bigl(k^2+\varepsilon^{-2}F''(u_0)+\beta k^4\bigr)} {1+\frac{\Delta t}{2}M k^2 \Gamma(\theta_{\mathbf{k}}) \bigl(k^2+\varepsilon^{-2}F''(u_0)+\beta k^4\bigr)}.
	\end{aligned} 
\end{equation}
In Fig.~\ref{fig:aniso_amp_2x2}, we compare the exact amplification factor $g_{\rm ex}$ with the numerical factors $g_{\rm EE}$, $g_{\rm IE}$, and $g_{\rm IM}$ on the $(k_x,k_y)$ plane.

\begin{figure}[htbp]
	\centering
	
	\resizebox{0.85\textwidth}{!}{%
		\begin{minipage}{\textwidth}
			\centering
			
			\begin{subfigure}{0.44\linewidth}
				\centering
				\includegraphics[width=\linewidth]{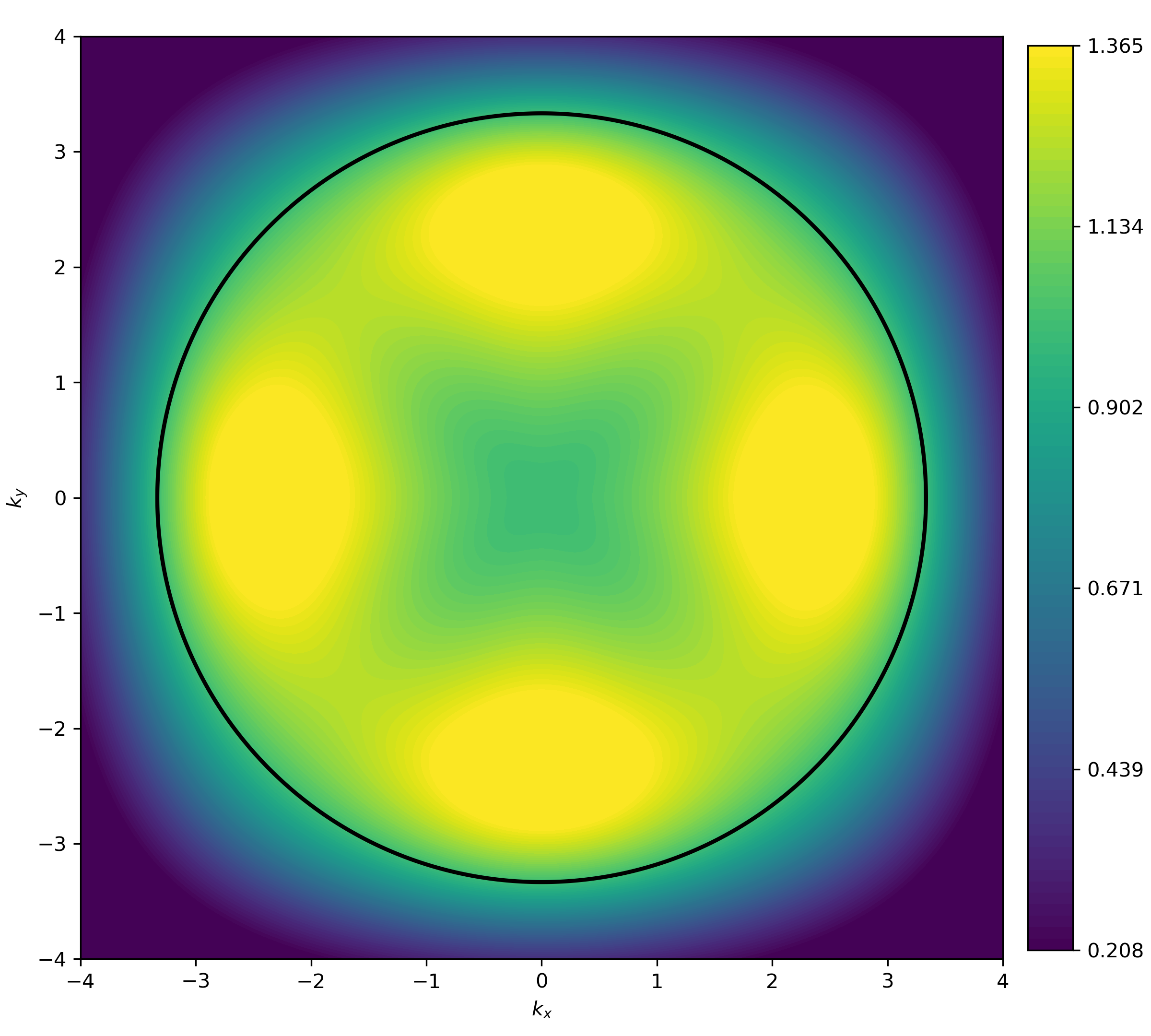}
			\end{subfigure}\hfill
			\begin{subfigure}{0.44\linewidth}
				\centering
				\includegraphics[width=\linewidth]{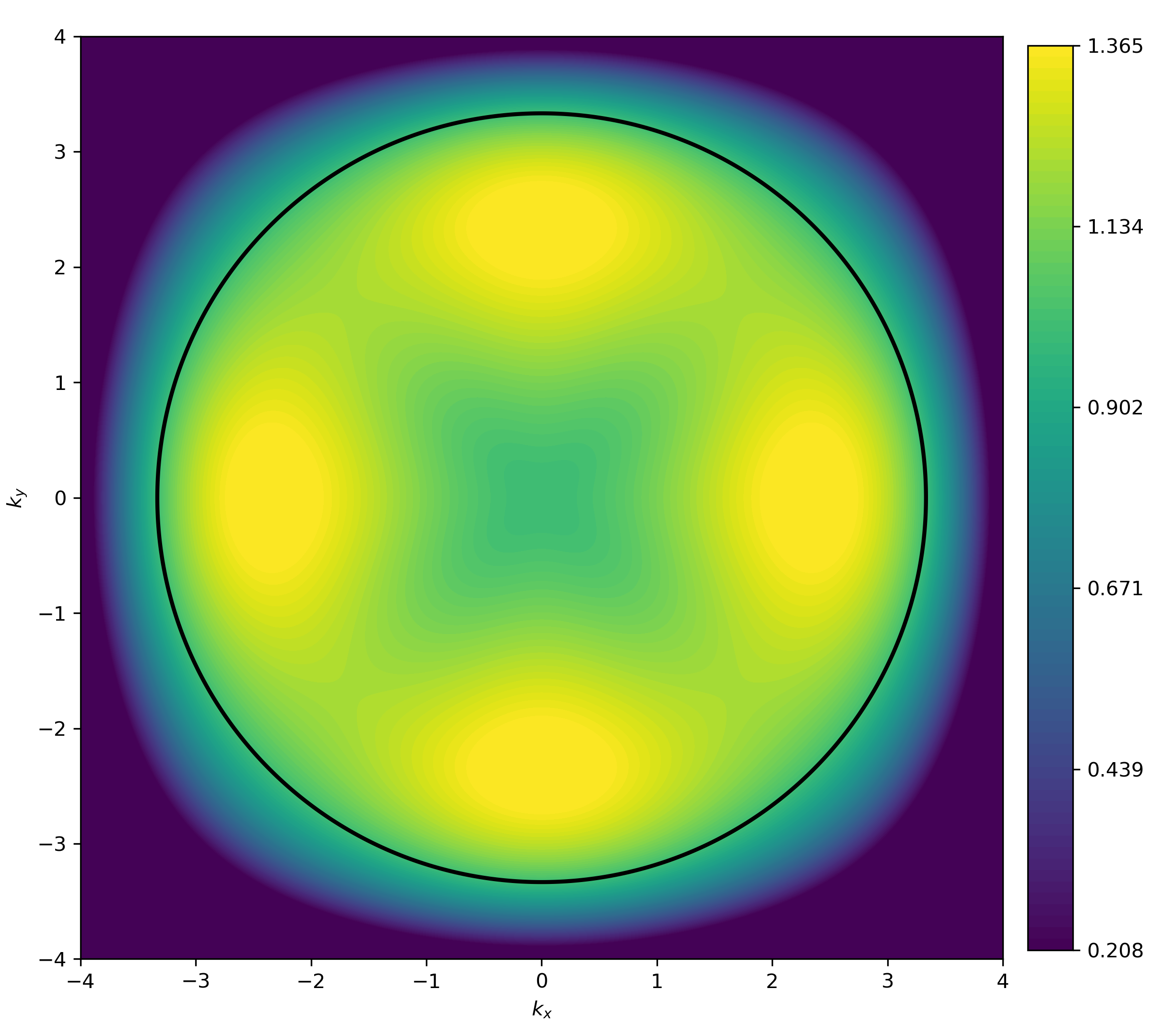}
			\end{subfigure}
			
			\vspace{0.4em}
			
			\begin{subfigure}{0.44\linewidth}
				\centering
				\includegraphics[width=\linewidth]{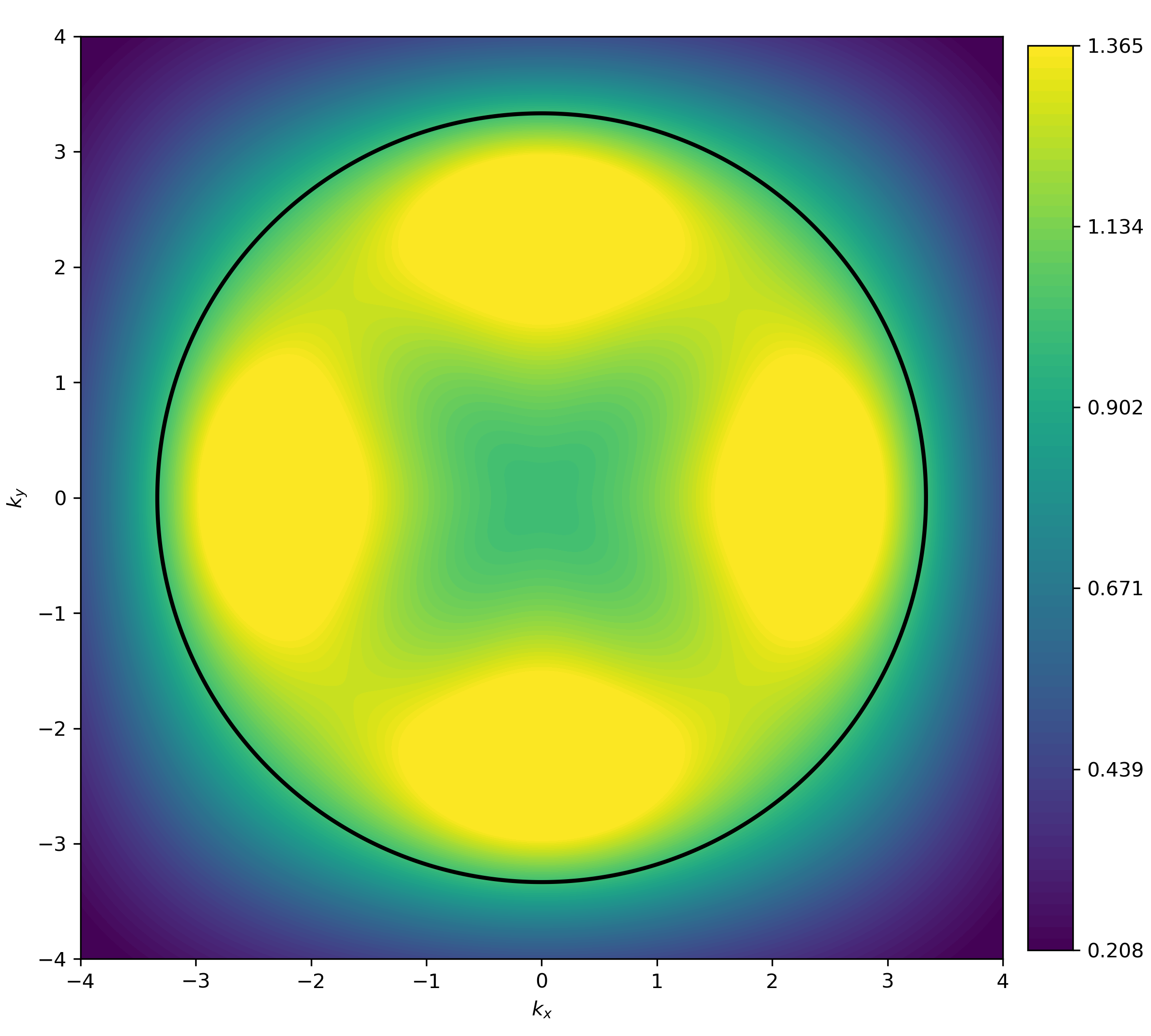}
			\end{subfigure}\hfill
			\begin{subfigure}{0.44\linewidth}
				\centering
				\includegraphics[width=\linewidth]{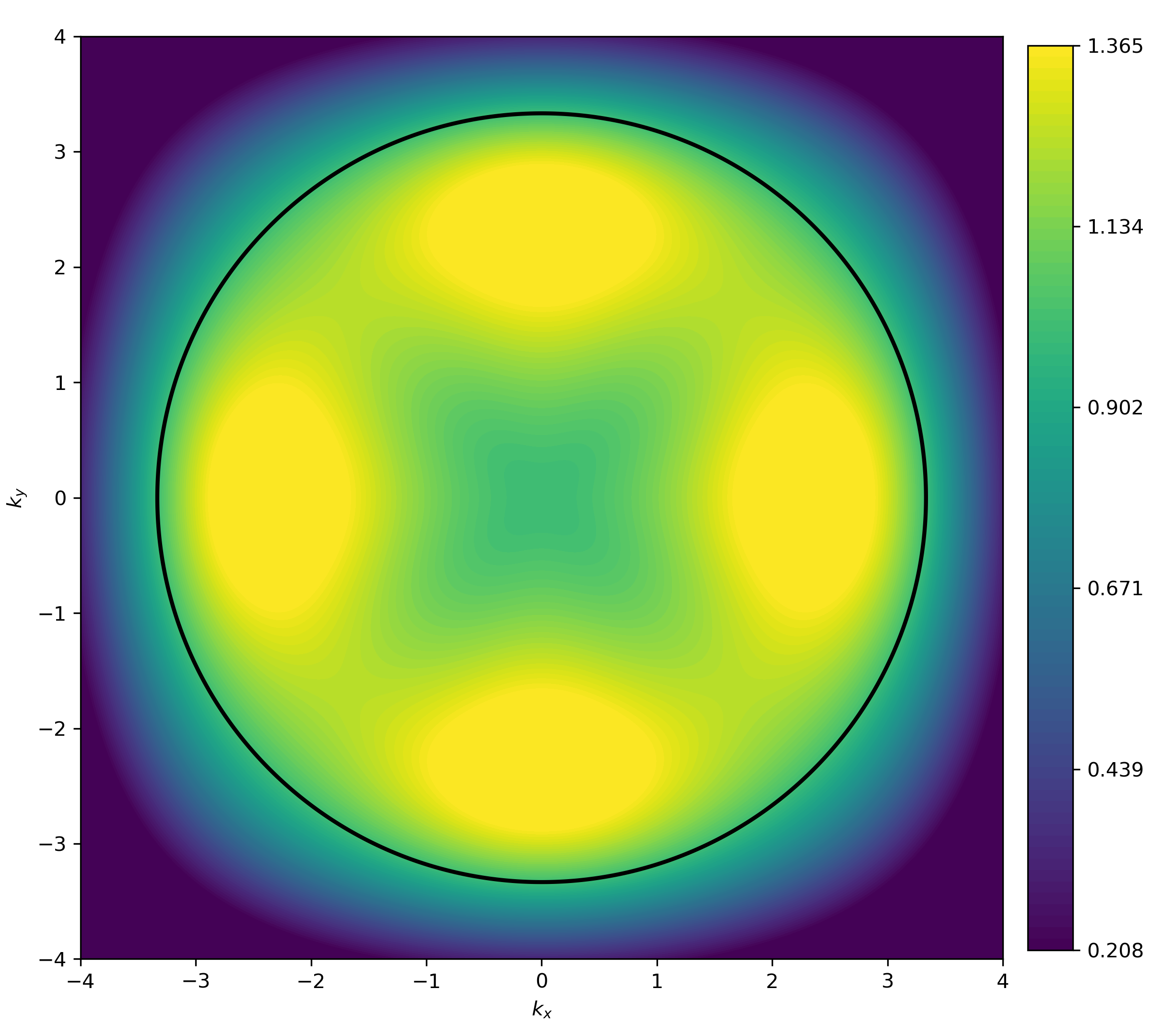}
			\end{subfigure}
			
		\end{minipage}%
	}
	
	\caption{Discrete dispersion relation of the linearized anisotropic CH equation with $u_0=0$ and $\theta=0$. Top left: exact amplification factor; top right: explicit Euler; bottom left: implicit Euler; bottom right: midpoint method.}
	\label{fig:aniso_amp_2x2}
\end{figure}

\paragraph{Equilibrium shapes and missing orientations}
The Gibbs--Thomson relation describes the dependence of the chemical potential on the curvature of an interface. For a 2D interface,  it is described as
\[
\mu=\mu_0+v_m\bigl(\Gamma(\theta)+\Gamma_{\theta\theta}(\theta)\bigr)\,K,
\] where  $\Gamma(\theta)$ is the anisotropic surface energy,  $\mu$ is the interfacial chemical potential.  At equilibrium, $\mu \equiv \mu_e$ is constant, and the equilibrium crystal corresponds to the Wulff shape of $\Gamma(\theta)$. A regular Wulff shape (with no missing orientations) requires the surface stiffness to be nonnegative, which implies
\begin{equation}\label{eq:stiffness}
\Gamma(\theta)+\Gamma_{\theta\theta}(\theta)\ge 0.
\end{equation}
If Eq.~\eqref{eq:stiffness} is violated over an angular interval, the associated high-energy orientations are missing from the equilibrium interface. Geometrically, the naive parametric Wulff curve then contains unstable branches, often called "ears" \cite{CahnHoffman1974,Herring1951,HoffmanCahn1972,Sekerka2005}. The physical Wulff shape is then obtained by convexifying the construction and eliminating these branches.

For the twofold anisotropy, \(\Gamma(\theta;\alpha)=1+\alpha\cos(2\theta)\), so that \(\Gamma+\Gamma_{\theta\theta}=1-3\alpha\cos(2\theta)\), and the corresponding critical value is \(\alpha_c=\frac13\). For the fourfold anisotropy, \(\Gamma(\theta;\alpha)=1+\alpha\cos(4\theta)\), so that \(\Gamma+\Gamma_{\theta\theta}=1-15\alpha\cos(4\theta)\), and the corresponding critical value is \(\alpha_c=\frac{1}{15}\), where \(\alpha_c\) denotes the critical value at which the stiffness~\eqref{eq:stiffness} first vanishes.
We next study how the anisotropy strength $\alpha$ affects the equilibrium morphology and the missing-orientation behavior. 
The results are illustrated in Fig.~\ref{fig:Alpha2-mix}-- \ref{fig:Alpha4-mix}. It is investigated that different values of $\alpha$ give rise to distinct interface shapes and may induce missing-orientation behavior.
\begin{figure}[htbp]
	\centering
	\begin{subfigure}[t]{0.32\textwidth}
		\centering
		\includegraphics[width=\textwidth]{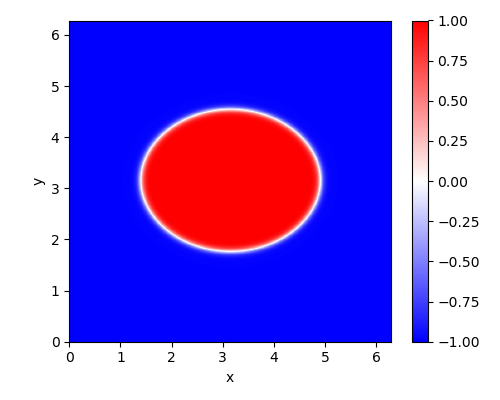}
	\end{subfigure}\hfill
	\begin{subfigure}[t]{0.66\textwidth}
		\centering
		\includegraphics[width=\textwidth]{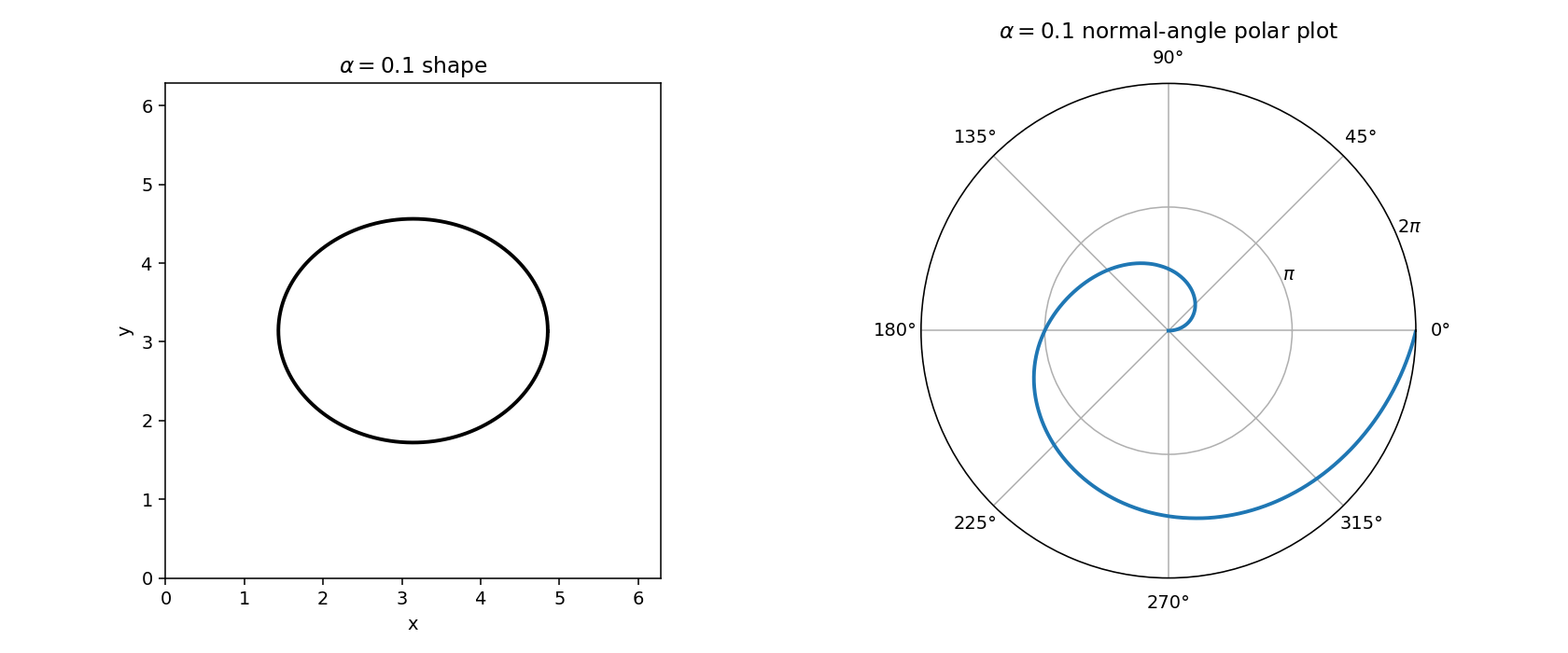}
	\end{subfigure}
	
	\vspace{0.5em}
	
	\begin{subfigure}[t]{0.32\textwidth}
		\centering
		\includegraphics[width=\textwidth]{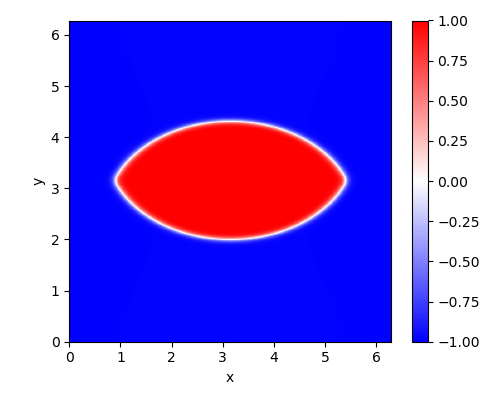}
	\end{subfigure}\hfill
	\begin{subfigure}[t]{0.66\textwidth}
		\centering
		\includegraphics[width=\textwidth]{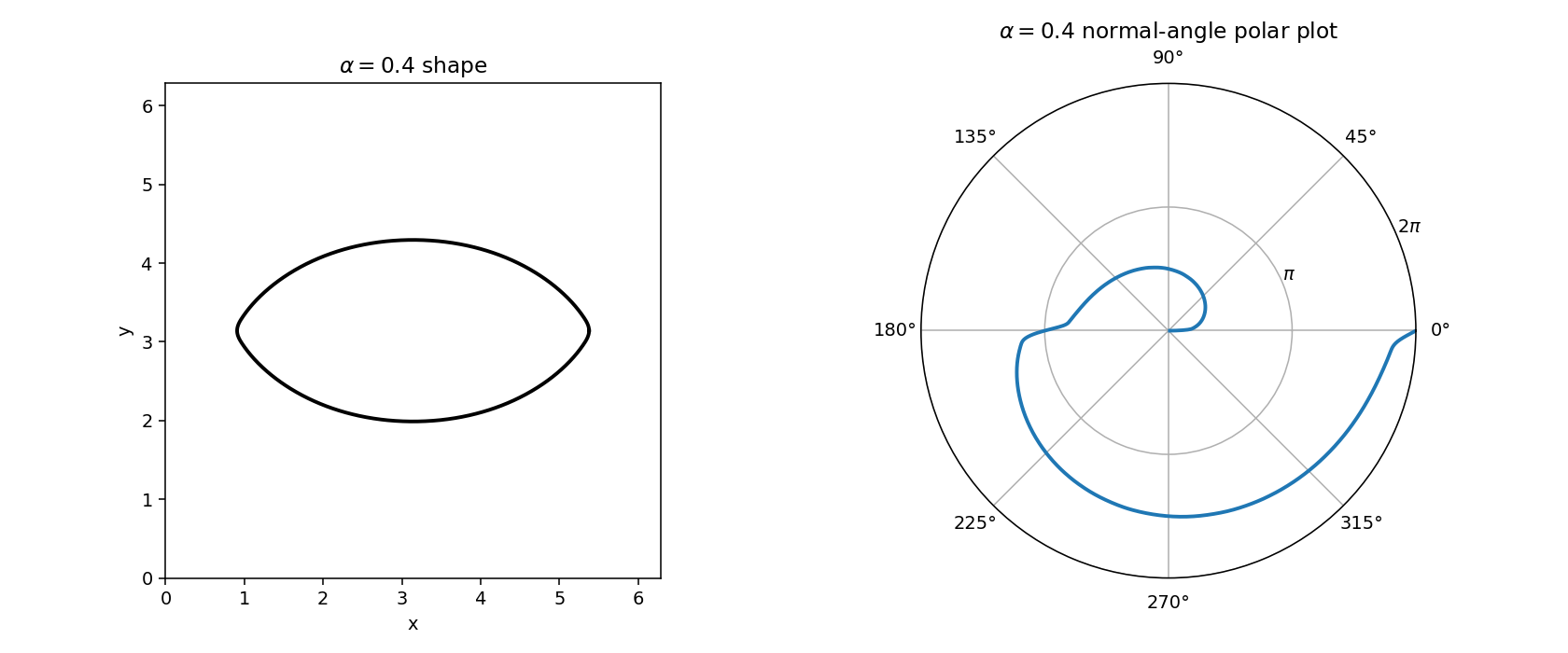}
	\end{subfigure}
	\caption{Morphologies and normal-angle polar plots for the twofold anisotropic CH model.}
	\label{fig:Alpha2-mix}
\end{figure}

\begin{figure}[htbp]
	\centering
	\begin{subfigure}[t]{0.32\textwidth}
		\centering
		\includegraphics[width=\textwidth]{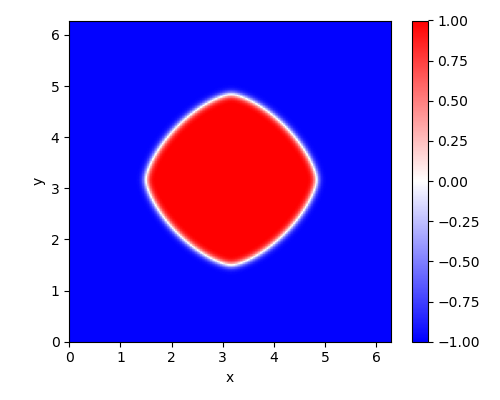}
	\end{subfigure}\hfill
	\begin{subfigure}[t]{0.66\textwidth}
		\centering
		\includegraphics[width=\textwidth]{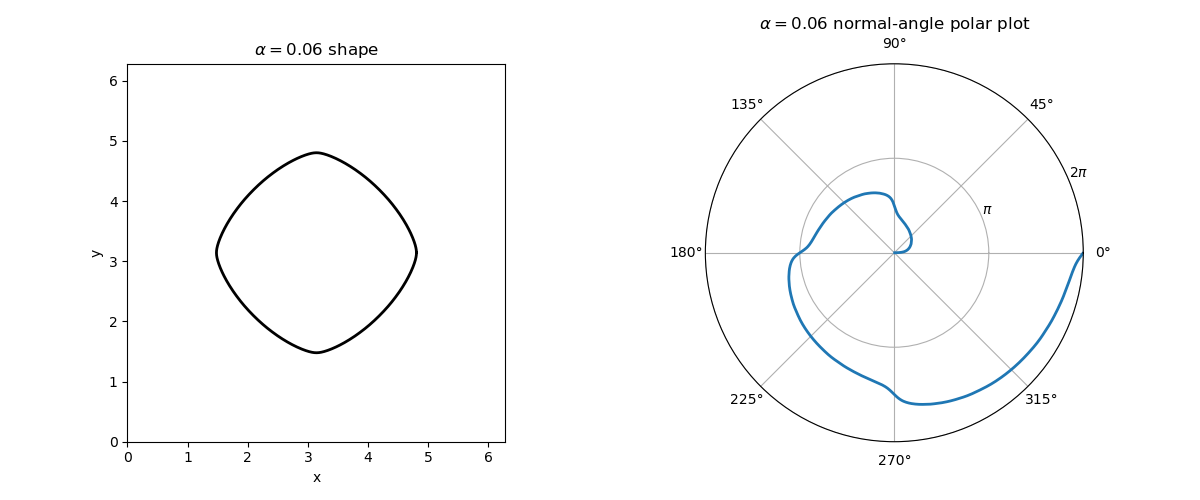}
	\end{subfigure}
	
	\vspace{0.5em}
	
	\begin{subfigure}[t]{0.32\textwidth}
		\centering
		\includegraphics[width=\textwidth]{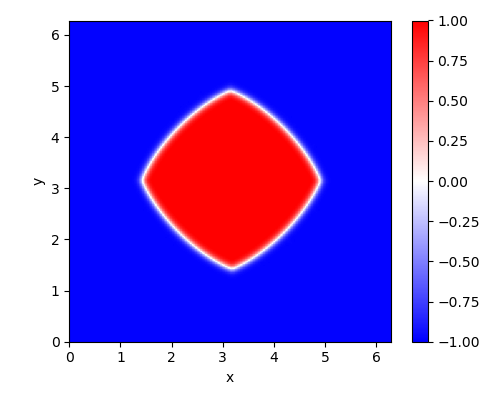}
	\end{subfigure}\hfill
	\begin{subfigure}[t]{0.66\textwidth}
		\centering
		\includegraphics[width=\textwidth]{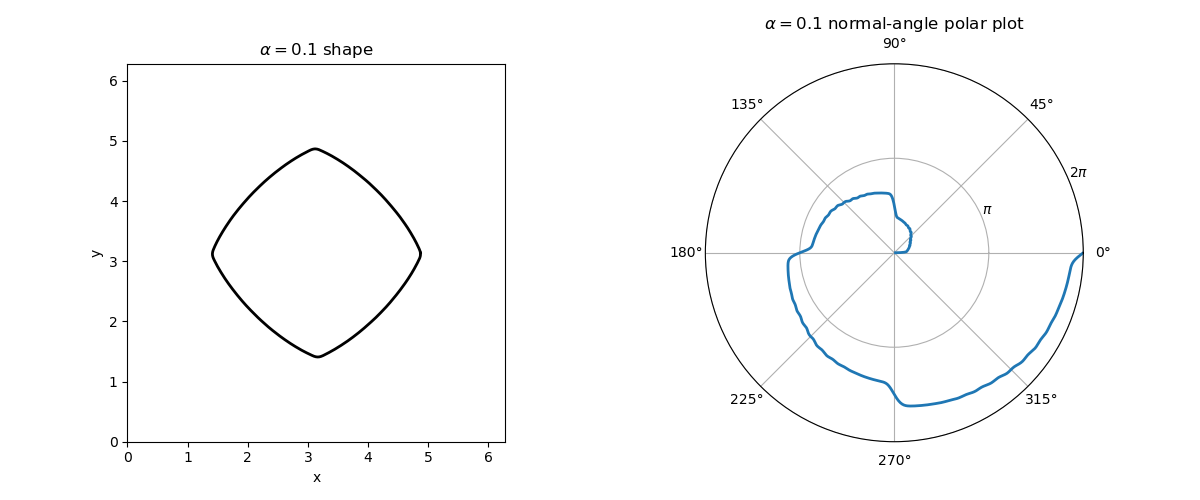}
	\end{subfigure}
	\caption{Morphologies and normal-angle polar plots for the fourfold anisotropic CH model.}
	\label{fig:Alpha4-mix}
\end{figure}
In Fig.~\ref{fig:Alpha2-mix}--\ref{fig:Alpha4-mix}, we demonstrate the interface shapes and their corresponding normal-angle polar plots for different anisotropic factors. It is observed that when Eq.~\eqref{eq:stiffness} is satisfied, the boundary is smooth and the normal-angle polar plot covers the angular range continuously, so that every orientation appears on the interface. In contrast, when Eq.~\eqref{eq:stiffness} is violated, the boundary becomes visibly faceted, and the normal-angle polar plot exhibits a gap-like loss of admissible directions. This indicates that some normal orientations are missing.
\section{Numerical Experiments}
In the previous sections, we derived fully discrete schemes. In this section, we validate the proposed methods through numerical experiments, including temporal convergence tests and simulations of phase evolution of different CH model from different initial conditions.

We first consider the isotropic CH model~\eqref{eq:CH-iso} with double-well potential function and we set the spatial domain to \([0,2\pi]^2\). Take the periodic boundary condition and the two-droplet initial condition:
\begin{equation}\label{eg:two-droplet}
	u_0(x,y)=\max\{u_1(x,y),u_2(x,y)\},
\end{equation}
where $u_i(x,y)=\tanh\!\left(\frac{\sqrt{(x-x_i)^2+(y-y_i)^2}-R_i}{1.2\,\varepsilon}\right)$, with $(x_1,y_1,R_1)=(0.8\pi,\,1.02\pi,\,0.5\pi),$ $(x_2,y_2,R_2)=(1.57\pi,\,0.98\pi,\,0.2\pi).$

The evolution of the phase field and the corresponding discrete energy is presented in Fig.~\ref{fig:iso-double}. As illustrated in these figures, the smaller droplet
gradually dissolves with its mass diffusing toward the larger one, resulting in a single, nearly circular inclusion at later times. The discrete free energy $H(u)$ decays monotonically and levels off, confirming the schemes energy dissipation and convergence toward an equilibrium state.
\begin{figure}[htbp]
	\centering
	\begin{subfigure}[t]{0.55\textwidth}
		\centering
		\includegraphics[width=\textwidth]{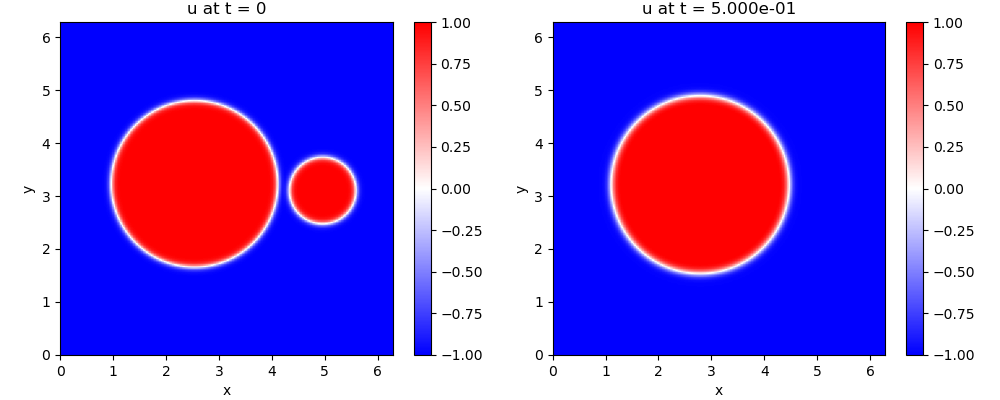}
		\caption{Phase-field evolution for the two-droplet test. Left: initial condition at \(t=0\); right: phase field at \(t=0.5\).}
		\label{fig:iso-double-phase}
	\end{subfigure}
	\hfill
	\begin{subfigure}[t]{0.40\textwidth}
		\centering
		\includegraphics[width=\textwidth]{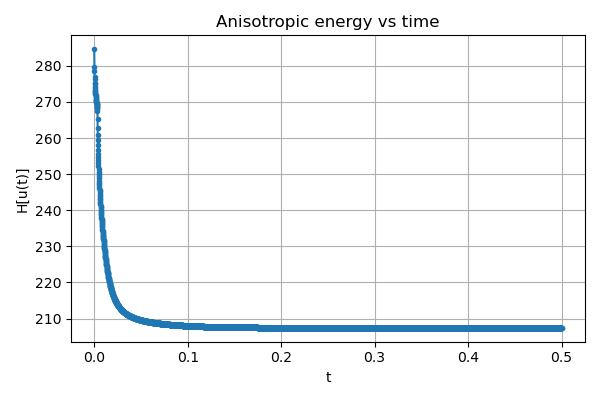}
		\caption{Evolution of discrete energy.}
		\label{fig:iso-double-energy}
	\end{subfigure}
	\caption{Two-droplet test: phase-field evolution and discrete energy decay.}
	\label{fig:iso-double}
\end{figure}

To verify the temporal convergence order, we compute numerical solutions with time step sizes \(4\times10^{-4}\), \(2\times10^{-4}\), and \(1\times10^{-4}\) on a fixed spatial grid, and estimate the order from the error ratios. The results are shown in Fig.~\ref{fig:time-order}.
\begin{figure}[htbp]
\centering
\includegraphics[width=0.65\textwidth]{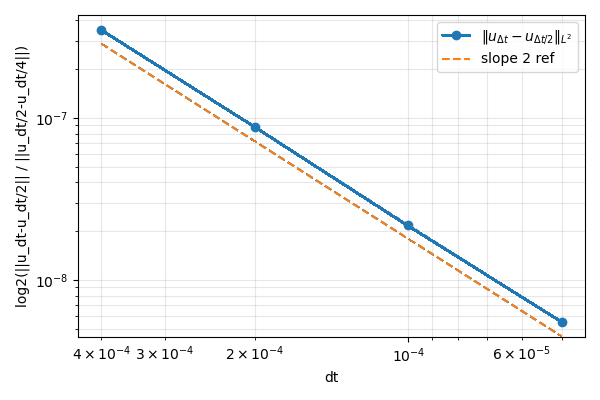}
\caption{Temporal convergence order of the proposed scheme.}
\label{fig:time-order}
\end{figure}

As follows, we also simulate the anisotropic CH model ~\eqref{eq:CH-ani} using the commonly used free-energy functional. Through numerical experiments, we illustrate how anisotropy influences the dynamics. Without loss of generality, in the following examples we set the spatial domain to \([0,2\pi]^2\), the anisotropy strength to $\alpha = 0.1$ and the regularization parameter to $\beta=6\times 10^{-4}$.

\paragraph{Single droplet}
We initialize $u$ by a single droplet centered at $(\pi,\pi)$ which is given by
\[
u_0(x,y)=\tanh\!\Bigl(\frac{\sqrt{(x-\pi)^2+(y-\pi)^2}-0.5\pi}{1.2\varepsilon}\Bigr).
\]
We use a time step size $\Delta t=10^{-4}$. Fig.~\ref{fig:I-single-phase}--\ref{fig:I-single-energy} show the phase-field evolution and the corresponding discrete free energy, respectively.
\begin{figure}[htbp]
	\centering
	\begin{subfigure}[t]{0.55\textwidth}
		\centering
		\includegraphics[width=\textwidth]{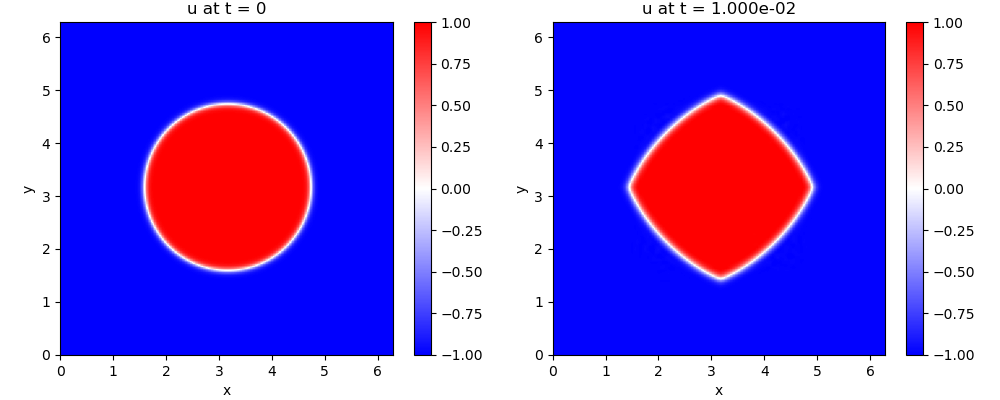}
		\caption{Phase-field evolution for the single-droplet test. Left: initial condition at $t=0$. Right: solution at $t=0.01$.}
		\label{fig:I-single-phase}
	\end{subfigure}
	\hfill
	\begin{subfigure}[t]{0.40\textwidth}
		\centering
		\includegraphics[width=\textwidth]{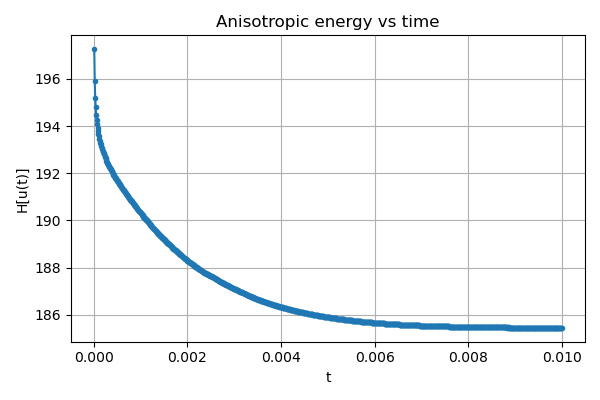}
		\caption{Evolution of discrete energy for the single-droplet test.}
		\label{fig:I-single-energy}
	\end{subfigure}
	\caption{Single-droplet test: phase-field evolution and discrete energy decay.}
	\label{fig:I-single}
\end{figure}

Fig.~\ref{fig:I-single} illustrate the evolution of single droplet. The initially circular interface quickly loses rotational symmetry and develops flat edges and sharp corners. This behavior is expected because the anisotropy function $\Gamma(\theta)$ assigns different interfacial energies to different orientations, so the interface motion favors the directions that minimize the anisotropic surface energy.

Meanwhile, Fig.~\ref{fig:I-single}(b) shows that the discrete free energy decreases monotonically in time. This monotone decay indicates that the proposed scheme preserves the dissipative structure of the CH dynamics.

\paragraph{Double droplets}
We use the two-droplet initial condition in Eq.~\eqref{eg:two-droplet}.
The coalescence dynamics are shown in Figs.~\ref{fig:I-double-phase}--\ref{fig:I-double-energy}.

\begin{figure}[htbp]
\centering
\includegraphics[width=\textwidth]{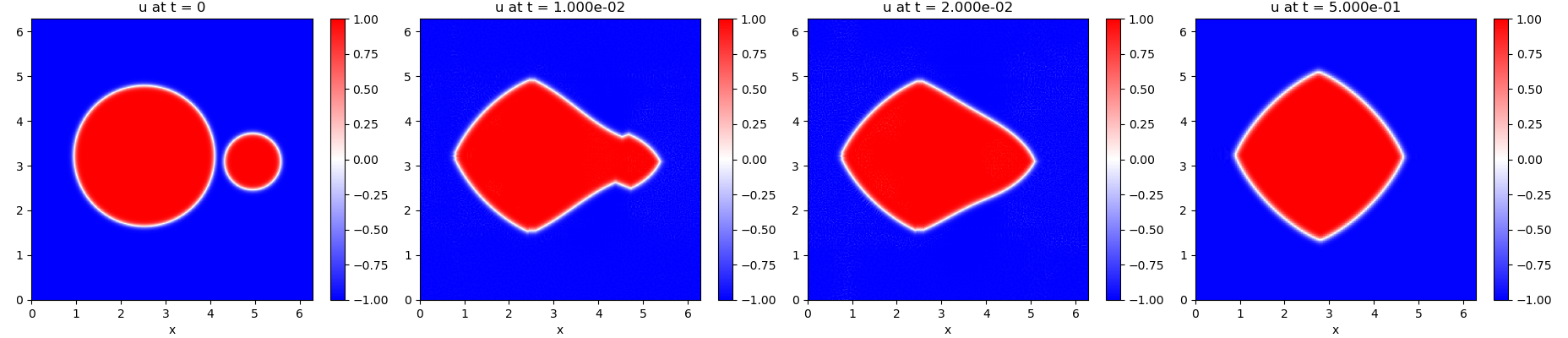}
\caption{Phase-field evolution for the double-droplet test. Left: initial condition at $t=0$. Right: solution at $t=0.5$.}
\label{fig:I-double-phase}
\end{figure}

\begin{figure}[htbp]
\centering
\includegraphics[width=0.6\textwidth]{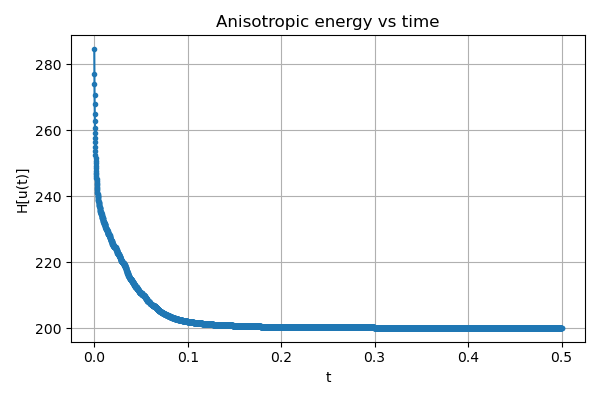}
\caption{Evolution of discrete energy for the double-droplet test.}
\label{fig:I-double-energy}
\end{figure}
Starting from two circular droplets, the solution rapidly develops straight facets and sharp corners aligned with the fourfold preferred directions prescribed by $\Gamma(\theta)$, and the merged droplet evolves toward a diamond-like Wulff shape. Meanwhile, the total mass remains conserved and the discrete free energy decreases monotonically throughout the simulation.

\paragraph{Random initial value}
We next consider a randomly perturbed initial field. We therefore take a randomly perturbed initial field,

\[
u_0(x,y)=\xi(x,y),
\qquad \xi(x,y)\sim \mathcal U(-1,1).
\]
We need to take a smaller time step $\Delta t=10^{-6}$ due to the stronger nonlinearity induced by the orientation-dependent factor, and the results are shown in
Fig.~\ref{fig:I-rand-phase}.

\begin{figure}[htbp]
\centering
\includegraphics[width=0.8\textwidth]{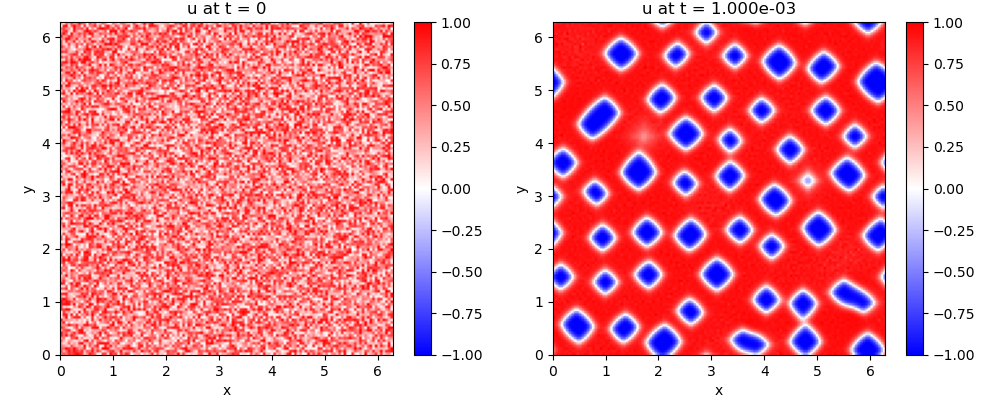}
\caption{Phase-field evolution for the random test. Left: initial condition at $t=0$. Right: solution at $t=0.001$.}
\label{fig:I-rand-phase}
\end{figure}

Fig.~\ref{fig:I-rand-phase} shows that the random perturbations quickly evolve into a fourfold-symmetric pattern, and the interfaces develop diamond-like facets aligned with the preferred orientations prescribed by \(\Gamma(\theta)\).


\section{Concluding remark}
In this paper, we proposed a quadratic reformulation framework for rational-like energy functions. Based on this framework, we developed the Quadratic Conservation Elevation (QCE) method by introducing suitable auxiliary variables and applying the implicit midpoint rule to the corresponding extended system. We have applied this method to Cahn--Hilliard (CH) equations with rational-like free-energy terms and proved that the resulting schemes preserve the original energy dissipation law.

We also analyzed the discrete dispersion relation of the proposed schemes and investigated their consistency with the continuous dynamics. Numerical experiments confirmed the expected spinodal decomposition, coarsening behavior, and second-order temporal accuracy. For anisotropic CH models, the method was further shown to capture missing orientations associated with different anisotropic energy functions. Simulations with various initial conditions illustrated phase separation, long-time coarsening dynamics, and anisotropic evolution.

\appendix
\section{Auxiliary variables for the anisotropic CH model}\label{pf:ch-iso-aux}

The fourfold anisotropy function $\Gamma(\theta)=1+\alpha\cos(4\theta)
=
1-3\alpha+4\alpha\,\frac{u_x^4+u_y^4}{(u_x^2+u_y^2)^2}.$
In order to apply the QCE methodology, we introduce a set of auxiliary variables that quadraticize the nonlinear terms. More precisely, we define $y_1=u_x^2,\,y_2=u_y^2,\,y_3=(y_1+y_2)^2,\,y_4=y_3^{-1},\,y_5=y_1^2+y_2^2,\,y_6=y_4y_5$ together with $z_1=\frac12(u^2-1),\,z_2=z_1^2,\,\phi_1=(\Delta u)^2.$
Here, the variables \(y_1,\dots,y_6\) are introduced to represent the anisotropy coefficient in quadratic form, while \(z_1,z_2\) and \(\phi_1\) correspond to the bulk-potential and higher-order regularization terms. Their discrete counterparts are denoted by \(Y_1,\dots,Y_6\), \(Z_1\), \(Z_2\), and \(\Phi_1\), respectively.

\section{Proof of Theorem~\ref{thm:ani-diss-law}}\label{pf:ani-diss-law}

\begin{proof}
	Let $\Gamma^n=\Gamma(\Theta(U^n))$, calculating the discrete time derivative of energy gives
	\begin{equation}\label{eq:E-diff-mid}
		\small
		\begin{aligned}
			&\frac{E_{\mathrm{ani},h}(U^{n+1})-E_{\mathrm{ani},h}(U^n)}{\Delta t}\\
			=&
			\bigl(\Gamma^{n+\frac12}\odot U_x^{n+\frac12},\delta_t U_x^{n+1}\bigr)_h 
			+\bigl(\Gamma^{n+\frac12}\odot U_y^{n+\frac12},\delta_t U_y^{n+1}\bigr)_h 
			+\frac{1}{\varepsilon^2}\bigl(\Gamma^{n+\frac12}\odot f^{n+\frac12},\delta_t U^{n+1}\bigr)_h\\
			+&\beta\bigl(\Gamma^{n+\frac12}\odot \Delta_hU^{n+\frac12},\,\delta_t(\Delta_hU)^{n+1}\bigr)_h 
			+\frac12\bigl(\delta_t\Gamma^{n+1},\, (U_x^{n+\frac12})^{\odot2}
			+(U_y^{n+\frac12})^{\odot2}\bigr)_h \\
			+&\frac{1}{\varepsilon^2}\bigl(\delta_t\Gamma^{n+1},\,F(U^{n+\frac12})\bigr)_h 
			+\frac{\beta}{2}\bigl(\delta_t\Gamma^{n+1},\,(\Delta_hU^{n+\frac12})^{\odot2}\bigr)_h,
		\end{aligned}
	\end{equation}
	where $f^{n+\frac12}$ satisfies $F(U^{n+1})-F(U^n)=f^{n+\frac12}\odot (U^{n+1}-U^n).$	
	The summation of last three terms can be simplified to $4\alpha\,\bigl(\delta_t Y_6^{n+1},\,\Psi^{n+\frac12}\bigr)_h$,
	where $\Psi^{n+\frac12}	=	\frac12\Bigl((U_x^{n+\frac12})^{\odot2}	+(U_y^{n+\frac12})^{\odot2}\Bigr)
	+\varepsilon^{-2}F\!(U^{n+\frac12})
	+\frac{\beta}{2}\Bigl(\Delta_hU^{n+\frac12}\Bigr)^{\odot2}$.
	
	Set $\mathcal{K}^{n+\frac12}
	=
	Y_5^{n+\frac12}\odot Y_4^{n+1}\odot Y_4^{n}\odot
	(Y_1^{n+\frac12}+Y_2^{n+\frac12})$,
	we can simplify $\delta_t Y_6$ to 
	$\delta_t Y_6^{n+1}=4\,U_x^{n+\frac12}\odot\Bigl(Y_4^{n+\frac12}\odot Y_1^{n+\frac12}-\mathcal{K}^{n+\frac12}
	\Bigr)\odot \delta_t U_x^{n+1} + 4\,U_y^{n+\frac12}\odot\Bigl(Y_4^{n+\frac12}\odot Y_2^{n+\frac12}-\mathcal{K}^{n+\frac12}
	\Bigr)\odot \delta_t U_y^{n+1}$. From Eq.~\eqref{eq:Gamma-aux-h} we have
	\begin{equation}\label{eq:Gamma-aux-hh}
		\small
		\begin{split}
			4\alpha\,\bigl(\delta_t Y_6^{n+1},\,\Psi^{n+\frac12}\bigr)_h	
			=&
			\Bigl(	\underbrace{16\alpha\,\Psi^{n+\frac12}\odot U_x^{n+\frac12}\odot
				\bigl(
				Y_4^{n+\frac12}\odot Y_1^{n+\frac12}-\mathcal{K}^{n+\frac12}
				\bigr)}_{\widetilde B_x^{n+\frac12}},\, \delta_t U_x^{n+1}\Bigr)_h\\
			+&
			\Bigl(	\underbrace{16\alpha\,\Psi^{n+\frac12}\odot U_y^{n+\frac12}\odot
				\bigl(
				Y_4^{n+\frac12}\odot Y_2^{n+\frac12}-\mathcal{K}^{n+\frac12}
				\bigr)}_{\widetilde B_y^{n+\frac12}},\, \delta_tU_y^{n+1}\Bigr)_h
		\end{split}
	\end{equation}
	
	Substituting \eqref{eq:Gamma-aux-hh} into \eqref{eq:E-diff-mid} gives
	\begin{equation}\label{eq:collect_general_h2}
		\begin{aligned}
			\frac{E_{\mathrm{ani},h}(U^{n+1})-E_{\mathrm{ani},h}(U^n)}{\Delta t}
			=&
			(A^{n+\frac12},\,\delta_t U^{n+1}\bigr)_h
			+\bigl(B_x^{n+\frac12},\,\delta_t U_x^{n+1}\bigr)_h\\
			&+\bigl(B_y^{n+\frac12},\,\delta_t U_y^{n+1}\bigr)_h
			+\bigl(C^{n+\frac12},\,\delta_t(\Delta_hU)^{n+1}\bigr)_h,
		\end{aligned}
	\end{equation}
	where $A^{n+\frac12}=\varepsilon^{-2}\Gamma^{n+\frac12}\odot f^{n+\frac12}$, 
	$B_x^{n+\frac12}=\Gamma^{n+\frac12}\odot U_x^{n+\frac12}+\widetilde B_x^{n+\frac12}$,  $B_y^{n+\frac12}=\Gamma^{n+\frac12}\odot U_y^{n+\frac12}+\widetilde B_y^{n+\frac12}$,  
	$C^{n+\frac12}=\beta\,\Gamma^{n+\frac12}\odot \Delta_hU^{n+\frac12}.$
	
	Let $\mu^{n+\frac12}=	A^{n+\frac12}-D_xB_x^{n+\frac12}-B_y^{n+\frac12}D_y^\top+\Delta_hC^{n+\frac12}$, employing integration by part to the last three terms of \eqref{eq:collect_general_h2} gives
	\[
	\frac{E_{\mathrm{ani},h}(U^{n+1})-E_{\mathrm{ani},h}(U^n)}{\Delta t}
	=
	\bigl(\mu^{n+\frac12},\Delta_h\mu^{n+\frac12}\bigr)_h
	=
	-\|D_x\mu^{n+\frac12}\|_h^2-\|\mu^{n+\frac12}D_y^\top\|_h^2
	\le 0.
	\]
	This completes the proof.
\end{proof}
\section{Proof of Theorem~\ref{thm:mann_midpoint}}\label{pf:mann_midpoint}
	
\begin{proof}
We first show that the map \(T\) is a contraction on \(\mathcal C\) under the condition of the theorem. For any \(X,Y\in \mathcal C\),
\[
	T(X)-T(Y)
	=
	\mathcal F^{-1}\!\Bigl[
	\bigl(I-\tfrac12\Delta t\,\widehat{\mathcal L}\bigr)^{-1} 
	\Delta t\Bigl(
	\widehat{\mathcal N\!\left(\frac{X+X^n}{2}\right)}
	-
	\widehat{\mathcal N\!\left(\frac{Y+X^n}{2}\right)}
	\Bigr)
	\Bigr].
\]
Since \(\widehat{\mathcal L}(\bm k)\le 0\), we have $1-\tfrac12\Delta t\,\widehat{\mathcal L}(\bm k)\ge 1$ for all $\bm k.$
Therefore, 
\[
\left\|
\mathcal F^{-1}\!\Bigl[
\bigl(I-\tfrac12\Delta t\,\widehat{\mathcal L}\bigr)^{-1}\widehat f
\Bigr]
\right\|
\le
\left\|\mathcal F^{-1}(\widehat f)\right\|,
\]
by Parseval's identity.	Applying this estimate with $f=\mathcal N\!\left(\frac{X+X^n}{2}\right)-\mathcal N\!\left(\frac{Y+X^n}{2}\right)$, we obtain
	\[
	\begin{aligned}
		\|T(X)-T(Y)\|
		&\le
		\Delta t\,
		\left\|\mathcal N\!\left(\frac{X+X^n}{2}\right)-\mathcal N\!\left(\frac{Y+X^n}{2}\right)\right\|\le
		\frac{\Delta t\,L_{\mathcal N}}{2}\|X-Y\|.
	\end{aligned}
	\]
	Hence \(T\) is a contraction whenever \(\Delta t\,L_{\mathcal N}<2\). The Banach fixed-point theorem then implies that \(T\) has a unique fixed point \(X^\ast\in \mathcal C\).
	
	Moreover, for the Mann iteration,
	\[
	\begin{aligned}
		\|X^{(m+1)}-X^\ast\|
		&\le
		(1-\omega)\|X^{(m)}-X^\ast\|
		+\omega\|T(X^{(m)})-T(X^\ast)\|\\
		&\le
		\bigl(1-\omega+\omega q\bigr)\|X^{(m)}-X^\ast\|,
	\end{aligned}
	\]
	where $	q=\frac{\Delta t\,L_{\mathcal N}}{2}<1.$
	Since \(0<\omega\le 1\), one has \(1-\omega+\omega q<1\), and therefore \(X^{(m)}\to X^\ast\) linearly.
	
	Finally, by construction of the map \(T\), the fixed-point equation \(X^\ast=T(X^\ast)\) is equivalent to the nonlinear system arising from the midpoint discretization. This completes the proof.

\end{proof}

\bibliographystyle{siamplain}
\bibliography{references}

@article{Kobayashi1993,
	author  = {Kobayashi, Ryo},
	title   = {Modeling and numerical simulations of dendritic crystal growth},
	journal = {Physica D: Nonlinear Phenomena},
	year    = {1993},
	volume  = {63},
	number  = {3-4},
	pages   = {410--423},
	doi     = {10.1016/0167-2789(93)90120-P}
}

@article{Torabi2009,
	author  = {Torabi, Solmaz and Lowengrub, John and Voigt, Axel and Wise, Steven},
	title   = {A new phase-field model for strongly anisotropic systems},
	journal = {Proceedings of the Royal Society A: Mathematical, Physical and Engineering Sciences},
	year    = {2009},
	volume  = {465},
	number  = {2105},
	pages   = {1337--1359},
	doi     = {10.1098/rspa.2008.0385}
}

@article{TaylorCahn1998,
	author  = {Taylor, Jean E. and Cahn, John W.},
	title   = {Diffuse interfaces with sharp corners and facets: Phase field models with strongly anisotropic surfaces},
	journal = {Physica D: Nonlinear Phenomena},
	year    = {1998},
	volume  = {112},
	number  = {3-4},
	pages   = {381--411},
	doi     = {10.1016/S0167-2789(97)00177-2}
}

@article{Eggleston2001,
	author  = {Eggleston, John J. and McFadden, Grant B. and Voorhees, Peter W.},
	title   = {A phase-field model for highly anisotropic interfacial energy},
	journal = {Physica D: Nonlinear Phenomena},
	year    = {2001},
	volume  = {150},
	number  = {1-2},
	pages   = {91--103},
	doi     = {10.1016/S0167-2789(00)00222-0}
}

@article{Wulff1901,
	author  = {Wulff, G.},
	title   = {Zur Frage der Geschwindigkeit des Wachsthums und der Aufl{\"o}sung der Krystallfl{\"a}chen},
	journal = {Zeitschrift f{\"u}r Kristallographie},
	year    = {1901},
	volume  = {34},
	number  = {1-6},
	pages   = {449--530},
	doi     = {10.1524/zkri.1901.34.1.449}
}

@article{Boettinger2002,
	author  = {Boettinger, William J. and Warren, James A. and Beckermann, Christoph and Karma, Alain},
	title   = {Phase-Field Simulation of Solidification},
	journal = {Annual Review of Materials Research},
	year    = {2002},
	volume  = {32},
	number  = {1},
	pages   = {163--194},
	doi     = {10.1146/annurev.matsci.32.101901.155803}
}

@incollection{Herring1951,
	author    = {Herring, Conyers},
	title     = {Surface Tension as a Motivation for Sintering},
	booktitle = {Fundamental contributions to the continuum theory of evolving
	phase interfaces in solids},
	publisher = {Springer, Berlin},
	address   = {New York},
	year      = {1999},
	pages     = {33--69}
}

@article{cahn1958,
	author  = {Cahn, John W. and Hilliard, John E.},
	title   = {Free Energy of a Nonuniform System. I. Interfacial Free Energy},
	journal = {The Journal of Chemical Physics},
	year    = {1958},
	volume  = {28},
	number  = {2},
	pages   = {258--267},
	doi     = {10.1063/1.1744102}
}

@article{Yang2017IEQ,
	author  = {Yang, Xiaofeng and Zhao, Jia and Wang, Qiang and Shen, Jie},
	title   = {Numerical approximations for a three-component Cahn--Hilliard phase-field model based on the invariant energy quadratization method},
	journal = {Mathematical Models and Methods in Applied Sciences},
	year    = {2017},
	volume  = {27},
	number  = {11},
	pages   = {1993--2030},
	doi     = {10.1142/S0218202517500373}
}

@article{Shen2018SAV,
	author  = {Shen, Jie and Xu, Jie and Yang, Jiang},
	title   = {The scalar auxiliary variable (SAV) approach for gradient flows},
	journal = {Journal of Computational Physics},
	volume = {353},
	pages = {407-416},
	year = {2018},
	issn = {0021-9991},
	doi = {10.1016/j.jcp.2017.10.021},
}

@book{Furihata2010DVD,
	author    = {Furihata, Daisuke and Matsuo, Takayasu},
	title     = {Discrete Variational Derivative Method: A Structure-Preserving Numerical Method for Partial Differential Equations},
	publisher = {Chapman and Hall/CRC},
	year      = {2010},
	isbn      = {9781420094466},
	doi       = {10.1201/b10387}
}

@article{Lu2025,
	author = {Lu, Nan and Wang, Yushun and Sun, Yajuan},
	title = {Geometric Integration for the Linear-Gradient System},
	journal = {SIAM Journal on Scientific Computing},
	volume = {47},
	number = {1},
	pages = {A46--A71},
	year = {2025},
	doi = {10.1137/23M1617618}
}

@article{Celledoni2012,
	author  = {Celledoni, Elena and Grimm, Volker and McLachlan, Robert I. and McLaren, David I. and O'Neale, Dion and Owren, Brynjulf and Quispel, G. R. W.},
	title   = {Preserving energy resp. dissipation in numerical PDEs using the ``Average Vector Field'' method},
	journal = {Journal of Computational Physics},
	year    = {2012},
	volume  = {231},
	pages   = {6770--6789},
	doi     = {10.1016/j.jcp.2012.06.022}
}

@article{Sekerka2005,
	author  = {Sekerka, Robert F.},
	title   = {Analytical criteria for missing orientations on three-dimensional equilibrium shapes},
	journal = {Journal of Crystal Growth},
	year    = {2005},
	volume  = {275},
	number  = {1},
	pages   = {77--82},
	doi     = {10.1016/j.jcrysgro.2004.10.069},
}

@article{karma1996,
	author  = {Karma, Alain and Rappel, W.-J.},
	title   = {Phase-field method for computationally efficient modeling of solidification with arbitrary interface kinetics},
	journal = {Physical Review E},
	year    = {1996},
	volume  = {53},
	number  = {4},
	pages   = {R3017--R3020},
	doi     = {10.1103/PhysRevE.53.R3017}
}

@article{ma2006implementation,
	author  = {Ma, Ning and Chen, Qing and Wang, Yunzhi},
	title   = {Implementation of high interfacial energy anisotropy in phase field simulations},
	journal = {Scripta Materialia},
	year    = {2006},
	volume  = {54},
	number  = {11},
	pages   = {1919--1924},
	doi     = {10.1016/j.scriptamat.2006.02.005}
}

@article{Mann1953,
	author  = {Mann, W. R.},
	title   = {Mean Value Methods in Iteration},
	journal = {Proceedings of the American Mathematical Society},
	volume  = {4},
	year    = {1953},
	pages   = {506--510},
	doi     = {10.2307/2032162}
}

@article{Lu2024,
	author = {Lu, Nan and Chenxi, Wang and Zhang, Zhen},
	title = {Decoupled and energy stable schemes for phase-field surfactant model based on mobility operator splitting technique},
	year = {2025},
	volume = {459},
	page = {Paper No. 116365, 19},
	journal = {Journal of Computational and Applied Mathematics},
	doi = {10.1016/j.cam.2024.116365}
}

@article{McLachlanQuispelRobidoux1999,
	author  = {McLachlan, Robert I. and Quispel, G. R. W. and Robidoux, Nicolas},
	title   = {Geometric integration using discrete gradients},
	journal = {Philosophical Transactions of the Royal Society of London. Series A: Mathematical, Physical and Engineering Sciences},
	volume  = {357},
	number  = {1754},
	pages   = {1021--1045},
	year    = {1999},
	doi     = {10.1098/rsta.1999.0363}
}

@book{flory1953principles,
	title={Principles of Polymer Chemistry},
	author={Flory, Paul J.},
	year={1953},
	publisher={Cornell University Press}
}

@article{huggins1941solutions,
	title={Solutions of long chain compounds},
	author={Huggins, Maurice L.},
	journal={The Journal of Chemical Physics},
	volume={9},
	number={5},
	pages={440--440},
	year={1941}
}

@article{wise2009energy,
	title={An energy stable and convergent finite-difference scheme for
	the modified phase field crystal equation},
	author={Wang, C. and Wise, S. M.},
	journal={SIAM Journal on Numerical Analysis},
	volume={49},
	number={3},
	pages={945--969},
	year={2011},
	doi= {10.1137/090752675}
}

@article{Wu2022Review,
	author  = {Wu, Hao},
	title   = {A review on the Cahn--Hilliard equation: classical results and recent advances in dynamic boundary conditions},
	journal = {Electronic Research Archive},
	year    = {2022},
	volume  = {30},
	number  = {7},
	pages   = {2788--2832}
}

@article{CahnHoffman1974,
	author  = {Cahn, J. W. and Hoffman, D. W.},
	title   = {A Vector Thermodynamics for Anisotropic Surfaces II. Curved and Faceted Surfaces},
	journal = {Acta Metallurgica},
	volume = {22},
	number = {10},
	pages = {1205-1214},
	year = {1974},
	issn = {0001-6160},
	doi = {10.1016/0001-6160(74)90134-5}
}

@article{HoffmanCahn1972,
	author  = {Hoffman, D. W. and Cahn, J. W.},
	title   = {A Vector Thermodynamics for Anisotropic Surfaces I. Fundamentals and Application to Plane Surface Junctions},
	journal = {Surface Science},
	volume = {31},
	pages = {368-388},
	year = {1972},
	issn = {0039-6028},
	doi = {10.1016/0039-6028(72)90268-3}
}

@article{Tapley2022,
	author  = {Tapley, Benjamin K.},
	title   = {Geometric Integration of ODEs Using Multiple Quadratic Auxiliary Variables},
	journal = {SIAM Journal on Scientific Computing},
	year    = {2022},
	volume  = {44},
	number  = {4},
	pages   = {A2651--A2668},
	doi     = {10.1137/21M1442644}
}

@article{DahlbyOwren2011,
	author  = {Dahlby, Morten and Owren, Brynjulf},
	title   = {A General Framework for Deriving Integral Preserving Numerical Methods for PDEs},
	journal = {SIAM Journal on Scientific Computing},
	year    = {2011},
	volume  = {33},
	number  = {5},
	pages   = {2318--2340},
	doi     = {10.1137/100810174}
}
\end{document}